\newcommand{\eq}{\normalcolor{}}
\newcommand{\stab}{{\text{Stab}}}
\newcommand{\otb}{{\overline{\otimes}}}
\newcommand{\ott}{{\widetilde{\otimes}}}
\newcommand{\Mo}{{\mathcal M}}
\newcommand{\No}{{\mathcal N}}
\newcommand{\ot}{{\otimes}}
\newcommand{\ca}{{\mathcal C}}
\newcommand{\D}{{\mathcal D}}
\newcommand{\M}{{\mathcal M}}
\newcommand{\N}{{\mathcal N}}
\newcommand{\A}{{\mathcal A}}
\newcommand{\Do}{{\mathcal D}}
\newcommand{\rev}{\rm{rev}}
\newcommand{\balph}{\overline{\alpha}}
\newcommand{\walph}{\tilde{\alpha}}
\newcommand{\blambda}{\overline{\lambda}}
\newcommand{\bt}{{\boxtimes}}
\newcommand{\btc}{{\boxtimes_{\C}}}
\newcommand{\ku}{{\Bbbk}}
\newcommand{\uno}{ \mathbf{1}}
\newcommand{\C}{{\mathcal C}}
\newcommand{\id}{\mbox{\rm id\,}}
\newcommand{\Id}{\mbox{\rm Id\,}}
\newcommand{\Res}{\mbox{\rm Res\,}}
\newcommand{\Ind}{\mbox{\rm Ind\,}}
\newcommand{\vect}{\mbox{\rm vect\,}}
\newcommand{\Fun}{\operatorname{Fun}}
\newcommand\Hom{\operatorname{Hom}}
\newcommand{\End}{\operatorname{End}}
\theoremstyle{plain}
\numberwithin{equation}{section}
\newtheorem{teo}{Theorem}[section]
\newtheorem{lema}[teo]{Lemma}
\newtheorem{cor}[teo]{Corollary}
\newtheorem{prop}[teo]{Proposition}
\theoremstyle{definition}
\newtheorem{defi}[teo]{Definition}
  \newtheorem{exa}[teo]{Example}
\theoremstyle{remark}
\newtheorem{rmk}[teo]{Remark}
\def\pf{\begin{proof}}
\def\epf{\end{proof}}
\theoremstyle{remark}
\begin{document}

\title[Equivalence classes of exact module categories]
{Equivalence classes of exact module categories over graded tensor categories}
\author[   Mej\'ia Casta\~no and Mombelli  ]{ Adriana Mej\'ia Casta\~no and Mart\'in Mombelli
 }
 
 \address{Universidad del Norte,
 \newline
 \indent
 Departamento de Matem\'atica y Estad\'istica,
 \newline
 \indent
 Barranquilla, Colombia
 }
  \email{ sighana25@gmail.com, mejiala@uninorte.edu.co
 \newline
 \indent\emph{URL:} https://sites.google.com/site/adrianamejia1209}
\address{Facultad de Matem\'atica, Astronom\'\i a y F\'\i sica
\newline \indent
Universidad Nacional de C\'ordoba
\newline
\indent CIEM -- CONICET
\newline \indent Medina Allende s/n
\newline
\indent (5000) Ciudad Universitaria, C\'ordoba, Argentina}

\email{martin10090@gmail.com, mombelli@mate.uncor.edu
\newline \indent\emph{URL:}\/ http://www.famaf.unc.edu.ar/ $\sim$mombelli}
\subjclass[2010]{18D10; 16T05}

\begin{abstract}
We describe equivalence classes of indecomposable exact module categories over a finite graded tensor category. When applied to a pointed fusion category, our results coincide with the ones obtained in \cite{Na1}.
\end{abstract}

\date{\today}
\maketitle

%\tableofcontents

\section{Introduction}

Let  $\C$ be a finite tensor category. A $\C$-\textit{module category} consists of an abelian category $\M$ equipped  with an action functor $\C\times \M\to \M$, satisfying certain associativity and unit axioms.  The theory of representations of tensor categories has proven to be a powerful tool. In \cite{EO}, the authors introduce the notion of \textit{exact module category}, and as an interesting problem, the classification of indecomposable exact module categories over a fixed finite tensor category.

Let $G$ be a finite group, and $\D=\oplus_{g\in G} \C_g$ be a $G$-graded tensor category. This family of tensor categories has been studied in \cite{ENO}. In \cite{MM} and \cite{Ga2} the authors classify semisimple indecomposable modules over a semisimple $G$-graded tensor category $\D$ in terms of semisimple indecomposable modules over $\C_1$ and certain  cohomological data. This paper is devoted to extend and explain this classification, in the non-semisimple setting,  using a different approach, inspired on  results of \cite[Section 8]{MM}. Our classification, when applied to a pointed fusion category, recovers the results obtained in \cite{Na1}. Although very few results in this paper are new, we believe that the presentation of the results is our main contribution. We tried to be as   self-contained as possible.

\medbreak

The contents of the paper are the following. In Section  \ref{Section:represent} we give an account of all the necessary preliminaries on finite tensor categories and their representations. We recall the notion of internal Hom as an important tool in the study of module categories. In Section \ref{Section:gradedt} we recall the definition of graded tensor category, and some results concerning the restriction and induction of module categories. In Section \ref{Section:modograd} we start with the classification of indecomposable exact module categories over a fixed $G$-graded tensor category $\D=\oplus_{g\in G} \C_g$.

We aim to recover results from \cite[Section 8]{MM}. We show that indecomposable exact module categories over $\D=\oplus_{g\in G} \C_g$ are parametrized by collections $(H, \{A_g\}_{g\in H}, \beta)$ where for any $h,f,g\in H$
\begin{itemize}
\item $H\subseteq G$ is a subgroup;
\item $A_1=A$ is an algebra in $\C=\C_1$ such that $\C_A$ is an indecomposable exact $\C$-module;

\item $A_h$ is an invertible $A$-bimodule in $\C_h$, for any $h\in H$;

\item for any $h,f\in H$, $\beta_{f,h}:A_{fh} \to A_f\ot_A A_h $ is a bimodule isomorphism such that
$$ (\id_{A_f}\ot_A\beta_{g,h})\beta_{f,gh}=\walph_{A_f,A_g,A_h}(\beta_{f,g}\ot_A \id_{A_h}) \beta_{fg,h}. $$
\end{itemize}
Here $\walph$ is the associativity of the tensor category ${}_A\D_A$, of $A$-bimodules in $\D$. We also prove that two collections as above $(H, \{A_g\}_{g\in H}, \beta)$,  $(F, \{B_f\}_{f\in F}, \gamma)$ give equivalent module categories, if and only if, there exists $g\in G$, and  an invertible $(B,A)$-bimodule $C\in {}_B(\C_g)_{A}$ such that 
\begin{itemize}

\item $F=gHg^{-1}$;
\item there are isomorphisms $\tau_h: B_{ghg^{-1}}\ot_B C \to C \ot_A A_h,$ for all $h\in H$ such that
 \begin{align*}
     (\id\ot \beta_{h,l})\tau_{hl}&=\walph_{C,A_h,A_l}(\tau_h\ot\id) \walph^{-1}_{B_{ghg^{-1}},C,A_l} (\id\ot \tau_l)\walph_{B_{ghg^{-1}},B_{glg^{-1}},C}\\&(\gamma_{ghg^{-1},glg^{-1}}\ot\id).
 \end{align*}

\end{itemize}
The collection $(H, \{A_g\}_{g\in H}, \beta)$ encodes information to  describe how the category $\C_A$ has structure of $\C_H=\oplus_{g\in H} \C_g$-module. 
\smallbreak

To a collection $(H, \{A_g\}_{g\in H}, \beta)$ we associate an exact indecomposable $\D$-module category. To do this, we give to the category $\C_A$ a structure of $\C_H$-module category. Thus, the desired  $\D$-module is $\D\boxtimes_{\C_H} \C_A.$ This explicit description, allows us, in Corrollary \ref{fiber-funct}, to classify fiber functors over $\D$.

\subsection*{Acknowledgments} The work of M.M. was partially supported by CONICET (Argentina), FONCyT and Secyt (UNC). We also want to thank the referee for his/her constructive remarks that improve the presentation of the paper and for spotting several errors in previous versions of the paper.

\section{Preliminaries and Notation}

We shall work over  an algebraically closed field $\ku$ of characteristic 0. All vector spaces are assumed to be over $\ku$. All categories considered in this work will be abelian $\ku$-linear.

\subsection{Finite tensor categories}\label{Subsection:preliminaries:tensorcat} For basic notions on the theory of finite tensor categories we refer to \cite{EGNO}, \cite{EO}. Let $\C$ be a finite tensor category over $\ku$ with the associativity constraint given by $\alpha$, left and right unit isomorphisms given by $r$ and $l$. Let  $A\in \ca$ be an algebra. We shall denote by $\ca_A, {}_A\ca $ and $  {}_A\ca_A$ the categories of right $A$-modules, left $A$-modules and $A$-bimodules in $\ca$, respectively.  If $V\in \ca_A$ is a right $A$-module with action given by $\rho_V:V\ot A\to V$, and  $W\in{}_A\ca$ is a left $A$-module with action given by $\lambda_W:A\ot W\to W$; then we shall denote by $\pi_{V,W}: V \ot W\to V\ot_A W$  the coequalizer of the maps
$$\rho_V\ot\id_W,\,  (\id_V\ot \lambda_W)\alpha_{V,A,W}:(V \ot A)\ot W\longrightarrow V\ot W.$$
If $X, Y, Z\in {}_A\ca_A$ are $A$-bimodules, then  there exists a unique isomorphism $\balph_{X,Y,Z}:(X\ot Y)\ot _A Z\to X\ot (Y\ot_A Z)$ such that the diagram
\begin{equation}\label{assoc-A0} 
\xymatrix{
(X\ot Y)\ot Z \ar[d]_{\pi_{X\ot Y, Z} } \ar[r]^{\alpha_{X,Y,Z}} &  X\ot (Y\ot Z) \ar[d]^{ \id_X\ot \pi_{Y,Z}}\\
(X\ot Y)\ot_A Z \ar[r]^{\,\,\,\,\balph_{X,Y,Z}\,\,\,\,} & X\ot (Y\ot_A Z)
}
\end{equation}
is commutative.  Also, there are  isomorphisms $\walph_{X,Y,Z}:(X\ot_A Y)\ot _A Z\to X\ot_A (Y\ot_A Z)$ such that the diagram
\begin{equation}\label{assoc-A} 
\xymatrix{
(X\ot Y)\ot Z \ar[d]_{(\pi_{X,Y}\ot\id_Z)\pi_{X\ot Y, Z} } \ar[r]^{\alpha_{X,Y,Z}} &  X\ot (Y\ot Z) \ar[d]^{ \pi_{X,Y\ot_A Z}(\id_X\ot \pi_{Y,Z})}\\
(X\ot_A Y)\ot_A Z \ar[r]^{\,\,\,\,\walph_{X,Y,Z}\,\,\,\,} & X\ot_A (Y\ot_A Z)
}
\end{equation}
is commutative. It is well-known that ${}_A\ca_A$ is a monoidal category with product given by $\ot_A$ and associativity constraint given by $\walph$.

For any tensor category $\C$, we shall denote by $\C^{\rev}$ the tensor category obtained from $\C$ and opposite monoidal product: $X\ot^{\rev} Y=Y\ot X$, for any $X, Y\in\C$.
\begin{defi}\label{defi:bar-act} If $A\in \ca$ is an algebra and $C$ is a left $A$-module with structure map $\lambda_C:A\ot C\to C$, then we shall denote by $\blambda_C:A\ot_A C\to C$ the unique map such that $\blambda_C\, \pi_{A,C}=\lambda_C$. Analogously, If $C$ is a right $C$-module with structure map $\rho_C:C\ot A\to C$, we shall denote by $\overline{\rho}_C:C\ot_A A\to C$ the unique map such that $ \overline{\rho}_C \pi_{C,A}= \rho.$ In particular, the multiplication map of the algebra $m_A:A\ot A\to A$, makes $A$  a left (or right) $A$-module, thus one can consider the isomorphism $\overline{m}_A:A\ot_A A\to A$.
\end{defi}
The following technical result will be needed later.

\begin{lema}\label{tech-alg} Let $A\in \ca$ be an algebra with unit $u:\uno\to A$, and $(C,\lambda)$ a left $A$-module.  Then for any $X\in \ca_A$
\begin{equation}\label{unit-algA} 
\pi_{X,C} (\id_X\ot \blambda_C) \balph_{X,A,C} ((\id_X\ot u)\ot \id_C)(r^{-1}_X\ot \id_C)=\id_{X\ot_A C}.
\end{equation}
\end{lema}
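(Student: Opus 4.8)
The plan is to turn this identity, stated between endomorphisms of $X\ot_A C$, into a routine coherence computation in $\C$ by exploiting the universal properties of the coequalizers $\pi$. Since $\pi_{X,C}\colon X\ot C\to X\ot_A C$ is an epimorphism (it is a coequalizer), it suffices to prove that the left-hand side of \eqref{unit-algA}, pre-composed with $\pi_{X,C}$, equals $\pi_{X,C}$. Concretely one regards the whole composite as a morphism out of $X\ot C$, reading $r^{-1}_X\ot\id_C$ and $(\id_X\ot u)\ot\id_C$ as morphisms of $\C$ and inserting the evident projection $\pi_{X\ot A,C}$ before $\balph_{X,A,C}$.

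The first two reductions are formal. Using the defining diagram \eqref{assoc-A0} of $\balph_{X,A,C}$, I would replace $\balph_{X,A,C}\,\pi_{X\ot A,C}$ by $(\id_X\ot\pi_{A,C})\,\alpha_{X,A,C}$; and using the relation $\blambda_C\,\pi_{A,C}=\lambda_C$ from Definition \ref{defi:bar-act}, I would collapse $(\id_X\ot\blambda_C)(\id_X\ot\pi_{A,C})$ into $\id_X\ot\lambda_C$. After these substitutions the left-hand side equals $\pi_{X,C}$ post-composed with the morphism
\begin{equation*}
(\id_X\ot\lambda_C)\,\alpha_{X,A,C}\,((\id_X\ot u)\ot\id_C)\,(r^{-1}_X\ot\id_C)\colon X\ot C\longrightarrow X\ot C,
\end{equation*}
which lives entirely in $\C$, so the problem is reduced to showing that this morphism is $\id_{X\ot C}$.

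This last step is pure coherence. By naturality of $\alpha$ in its middle argument one has $\alpha_{X,A,C}\,((\id_X\ot u)\ot\id_C)=(\id_X\ot(u\ot\id_C))\,\alpha_{X,\uno,C}$; the triangle axiom gives $\alpha_{X,\uno,C}\,(r^{-1}_X\ot\id_C)=\id_X\ot l^{-1}_C$; and the left unit axiom $\lambda_C\,(u\ot\id_C)=l_C$ of the $A$-module $C$ then yields $\id_X\ot(\lambda_C\,(u\ot\id_C)\,l^{-1}_C)=\id_X\ot\id_C$. Hence the displayed morphism is the identity, the left-hand side of \eqref{unit-algA} equals $\pi_{X,C}$, and since $\pi_{X,C}$ is epi we conclude. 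The computation is almost entirely mechanical; the only place demanding care is the bookkeeping of which tensor products are taken over $\ku$ and which over $A$, so that each use of the naturality of $\pi$ and of \eqref{assoc-A0} is paired with morphisms of the correct variance. Once the expression has been pushed down into $\C$ no further subtlety remains.
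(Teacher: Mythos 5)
Your proposal is correct and is essentially the paper's own proof: both reduce the identity, via the fact that $\pi_{X,C}$ is an epimorphism, to showing that the left-hand side pre-composed with $\pi_{X,C}$ equals $\pi_{X,C}$, then push $\pi$ inward by naturality, apply \eqref{assoc-A0} and the relation $\blambda_C\,\pi_{A,C}=\lambda_C$, and conclude with naturality of $\alpha$, the triangle axiom, and the unit axiom of the module $C$. The only (immaterial) difference is the order of the final coherence steps: the paper applies the module unit axiom first and the triangle axiom last, while you invert $\alpha_{X,\uno,C}(r^{-1}_X\ot \id_C)$ into $\id_X\ot l_C^{-1}$ first.
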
 
\pf We have that, the left hand side of \eqref{unit-algA} composed with $\pi_{X,C}$ equals
\begin{align*} &=\pi_{X,C} (\id_X\ot \blambda_C) \balph_{X,A,C} ((\id_X\ot u)\ot \id_C)(r^{-1}_X\ot \id_C)\pi_{X,C}\\
&=\pi_{X,C} (\id_X\ot \blambda_C) \balph_{X,A,C} ((\id_X\ot u)\ot \id_C)\pi_{X\ot \uno,C}(r^{-1}_X\ot \id_C)\\
&=\pi_{X,C} (\id_X\ot \blambda_C) \balph_{X,A,C} \pi_{X\ot A,C}((\id_X\ot u)\ot \id_C)(r^{-1}_X\ot \id_C)\\
&=\pi_{X,C} (\id_X\ot \blambda_C)(\id_X\ot \pi_{ A,C}) \alpha_{X,A,C} ((\id_X\ot u)\ot \id_C)(r^{-1}_X\ot \id_C)\\
&=\pi_{X,C} (\id_X\ot \lambda_C) (\id_X\ot (u\ot \id_C))\alpha_{X,\uno,C}(r^{-1}_X\ot \id_C)\\
&=\pi_{X,C} (\id_X\ot l_C) \alpha_{X,\uno,C}(r^{-1}_X\ot \id_C)= \pi_{X,C}.
\end{align*}
The fourth equality follows from \eqref{assoc-A0}. This implies the Lemma.
\epf

\section{Representations of tensor categories}\label{Section:represent}

 Let $\C$ be a finite tensor category over $\ku$. A (left) \emph{module} over 
$\ca$ is a  finite   $\ku$-linear  abelian category $\Mo$ together with a $\ku$-bilinear 
bifunctor $\otb: \ca \times \Mo \to \Mo$, exact in each variable,  endowed with 
 natural associativity
and unit isomorphisms 
$$m_{X,Y,M}: (X\otimes Y)\otb M \to X \otb
(Y\otb M), \ \ \ell_M: \uno \otb M\to M.$$ These isomorphisms are subject to the following conditions:
\begin{equation}\label{left-modulecat1} m_{X, Y, Z\otb M}\; m_{X\otimes Y, Z,
M}= (\id_{X}\otb m_{Y,Z, M})\;  m_{X, Y\otimes Z, M}(\alpha_{X,Y,Z}\otb\id_M),
\end{equation}
\begin{equation}\label{left-modulecat2} (\id_{X}\otb l_M)m_{X,{\bf
1} ,M}= r_X \otb \id_M,
\end{equation} for any $X, Y, Z\in\C, M\in\M.$ Here $\alpha$ is the associativity constraint of $\C$.
Sometimes we shall also say  that $\Mo$ is a $\ca$-\emph{module} or a $\ca$-\emph{module category}.
 In a similar way, one can define  right modules and bimodules. See
 \cite{Gr}.
\medbreak

Let $\Mo$ and $\Mo'$ be a pair of $\C$-modules. We say that a functor $F:\Mo\to\Mo'$ is a \emph{module functor} if it is equipped with natural isomorphisms
$$c_{X,M}: F(X\otb M)\to
X\otb F(M),\ \ X\in  \ca, M\in \Mo,$$  such that
for any $X, Y\in
\ca$, $M\in \Mo$:
\begin{align}\label{modfunctor1}
(\id_X \otb  c_{Y,M})c_{X,Y\otb M}F(m_{X,Y,M}) &=
m_{X,Y,F(M)}\, c_{X\otimes Y,M},
\\\label{modfunctor2}
\ell_{F(M)} \,c_{\uno ,M} &=F(\ell_{M}).
\end{align}

There is a composition
of module functors: if $\Mo''$ is another $\C$-module and
$(G,d): \Mo' \to \Mo''$ is another module functor then the
composition
\begin{equation}\label{modfunctor-comp}
(G\circ F, e): \Mo \to \Mo'', \qquad  e_{X,M} = d_{X,F(M)}\circ
G(c_{X,M}),
\end{equation} is
also a module functor.

\smallbreak  
We denote by $\Fun_{\ca}(\Mo, \Mo')$ the category whose
objects are module functors $(F, c)$ from $\Mo$ to $\Mo'$. A
morphism between  module functors $(F,c)$ and $(G,d)\in\Fun_{\ca}(\Mo,
\Mo')$ is a \textit{natural module transformation}, that is, a natural transformation $\alpha: F \to G$ such
that for any $X\in \ca$, $M\in \Mo$:
\begin{gather}
\label{modfunctor3} d_{X,M}\alpha_{X\otb M} =
(\id_{X}\otb \alpha_{M})c_{X,M}.
\end{gather}
 Two module functors $F, G$ are \emph{equivalent} if there exists a natural module isomorphism
$\alpha:F \to G$.

\smallbreak
Two $\C$-modules $\Mo$ and $\Mo'$ are {\em equivalent} if there exist module functors $F:\Mo\to
\Mo'$, $G:\Mo'\to \Mo$, and natural module isomorphisms
$\Id_{\Mo} \to F\circ G$, $\Id_{\Mo'} \to G\circ F$.

A module is
{\em indecomposable} if it is not equivalent to a direct sum of
two no trivial modules. Recall from \cite{EO}, that  a
module $\Mo$ is \emph{exact} if $\Mo$  for any
projective object
$P\in \ca$ the object $P\otb M$ is projective in $\Mo$, for all
$M\in\Mo$.  If $\M, \N$ are $\C$-bimodules, we denote by $\M\boxtimes_\C \N$ the balanced tensor product over $\C$. See \cite{DSS}.

\medbreak

The next result seems to be well-known.

\begin{lema}\label{action-zero} Let $\M$ be a $\C$-module. If $X\in \C, M\in \M$ are non-zero objects, then $X\otb M\neq 0.$
\end{lema}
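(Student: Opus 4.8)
The plan is to exploit the rigidity of $\C$ together with the exactness of the action, reducing the statement to the simplicity of the unit object $\uno$. Fix a dual $X^*$ of $X$, with evaluation $\operatorname{ev}_X\colon X^*\ot X\to \uno$ and coevaluation satisfying the usual zig-zag identities. The strategy is to produce a canonical epimorphism onto $M$ whose source is forced to vanish once $X\otb M=0$.

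First I would check that $\operatorname{ev}_X$ is an epimorphism whenever $X\neq 0$. Since $\C$ is a tensor category we have $\End_\C(\uno)=\ku$, so $\uno$ is a simple object; consequently any nonzero morphism with target $\uno$ is automatically an epimorphism. That $\operatorname{ev}_X\neq 0$ is forced by the zig-zag identity: if $\operatorname{ev}_X$ vanished, the composite expressing $\id_X$ as a map factoring through $X\ot X^*\ot X$ would vanish as well, giving $\id_X=0$ and hence $X=0$, contrary to hypothesis. This step, where both rigidity and the simplicity of $\uno$ are used, is the main obstacle; everything afterward is formal.

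Next I would transport this epimorphism into $\M$. Because $\otb$ is exact in each variable, the functor $-\otb M$ preserves epimorphisms, so $\operatorname{ev}_X\otb \id_M\colon (X^*\ot X)\otb M\to \uno\otb M$ is an epimorphism; postcomposing with the unit isomorphism $\ell_M\colon \uno\otb M\to M$ yields an epimorphism $(X^*\ot X)\otb M\twoheadrightarrow M$. Applying the module associativity isomorphism $m_{X^*,X,M}\colon (X^*\ot X)\otb M\to X^*\otb(X\otb M)$, I rewrite this as an epimorphism $X^*\otb(X\otb M)\twoheadrightarrow M$.

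Finally I would argue by contradiction. If $X\otb M=0$, then since $X^*\otb-$ is additive it sends the zero object to the zero object, so $X^*\otb(X\otb M)=0$; but this object surjects onto $M$, whence $M=0$, contradicting $M\neq 0$. Therefore $X\otb M\neq 0$. I note that one could instead use that the coevaluation $\uno\to X\ot X^*$ is a monomorphism (again by simplicity of $\uno$) to show $X\otb(X^*\otb M)\neq 0$, but to reach $X\otb M$ itself the evaluation-epimorphism route above is the efficient one.
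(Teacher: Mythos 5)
Your proof is correct and is essentially the paper's own argument run in dual form: the paper composes the unit isomorphism with $coev_X\otb\id_M$ and the module associativity to obtain a monomorphism $M\hookrightarrow {}^{*}X\otb(X\otb M)=0$, whereas you use the other half of the zig-zag to obtain an epimorphism $X^*\otb(X\otb M)\twoheadrightarrow M$ from the evaluation. Both proofs rest on exactly the same ingredients (nonvanishing of the (co)evaluation via the zig-zag identity, simplicity of $\uno$ making it mono respectively epi, bi-exactness of $\otb$, and the associativity constraint $m$), so the difference is purely cosmetic; if anything, you spell out the mono/epi justification that the paper leaves implicit.
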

\pf Let us assume $X\otb M=0$. Since $X$ is non-zero, $coev_X$ is non-zero. The map
$$ M \xrightarrow{ l_M} \uno\otb M \xrightarrow{ coev_X\otb \id_M} (^{*}X\ot X)\otb M 
\xrightarrow{ m_{^{*}X,X,M}}{} ^{*}X\otb (X\otb M)=0, $$
is the zero morphism.   The coevaluation $coev_X$ is a monomorphism. Since  $ \otb$ is bi-exact, then
$coev_X\otb \id_M$ is a monomorphism. Thus, the above composition is also a monomorphism. Hence $M= 0$.\epf

The following example will be used throughout the paper. If $A\in \C$ is an algebra, the category $\C_A$ of right $A$-modules in $\C$ is a left $\C$-module category. The action $\C\times \C_A\to \C_A$ is the tensor product $(X,M)\mapsto X\ot M$, where the action of $A$ on $X\ot M$ is on the second tensorand.

\subsection{The internal Hom} Let $\C$ be a finite tensor category and $\M$ be  a $\C$-module. For any pair of objects $N,M\in\M$, the \emph{internal Hom} is an object $\underline{\Hom}_\C(N,M)\in \C$ representing the functor $\Hom_\M(-\otb N,M):\C\to \vect_\ku$. That is, there are natural isomorphisms
\begin{equation}\label{Hom-interno}\Hom_\C(X,\underline\Hom_{\C}(N,M))\simeq\Hom_\M(X\otb N,M),\text{ for all }X\in\C.\end{equation}

\begin{prop}\label{homint}\cite[Thm. 3.17]{EO}  For each object $0\neq M\in \M$ the internal Hom $A=\underline\Hom_\C(M,M)$ is an algebra in $\C$. If $N$ is a subobject of $M$ then $\underline\Hom_\C(M,N)$ is a right ideal of $A$. Moreover $\underline\Hom_\C(M,-):\M\to\C_A$ is a $\C$-module functor.
If $\M$ is  indecomposable exact, the functor 
$$\underline\Hom_\C(M,-):\M\to \C_A$$ is an equivalence of $\C$-modules.\qed
\end{prop}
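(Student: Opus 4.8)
The plan is to verify that the internal Hom is corepresented by an algebra, that it is a module functor, and — under the exactness/indecomposability hypotheses — that it is an equivalence. Throughout I would use the defining adjunction isomorphism \eqref{Hom-interno}, which is natural in both $X\in\C$ and, functorially, in the pair of objects. The very first point is to produce the algebra structure on $A=\uhom_\C(M,M)$. By \eqref{Hom-interno} with $N=M$, a morphism $\uhom_\C(M,M)\ot \uhom_\C(M,M)\to \uhom_\C(M,M)$ corresponds, by the adjunction, to a morphism $\big(\uhom_\C(M,M)\ot \uhom_\C(M,M)\big)\otb M\to M$. Using the module associativity isomorphism $m$ and the canonical evaluation (counit) $\mathrm{ev}_M:\uhom_\C(M,M)\otb M\to M$ adjoint to $\id_A$, I would define the multiplication as the composite $\mathrm{ev}_M\,(\id\otb \mathrm{ev}_M)\,m_{A,A,M}$. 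The unit $u:\uno\to A$ is the morphism adjoint to $\ell_M:\uno\otb M\to M$. Associativity and unitality of this multiplication then reduce to the module-category axioms \eqref{left-modulecat1} and \eqref{left-modulecat2} together with the uniqueness clause in the Yoneda-type isomorphism \eqref{Hom-interno}; this is a standard but slightly lengthy diagram chase that I would not grind through.

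Next I would explain the right ideal claim and the module functor structure, since both feed into the final equivalence. For a subobject $N\hookrightarrow M$, the internal Hom $\uhom_\C(M,N)$ is a subobject of $A$ by functoriality of \eqref{Hom-interno} in the second variable, and the multiplication $A$-action restricts so that $\uhom_\C(M,N)\cdot A\subseteq \uhom_\C(M,N)$, exhibiting it as a right ideal. More importantly, for each $P\in\M$ the object $\uhom_\C(M,P)$ carries a natural right $A$-module structure (multiplication on the right by $A$), so $\uhom_\C(M,-)$ lands in $\C_A$. To make it a $\C$-\emph{module} functor I would construct, for $X\in\C$ and $P\in\M$, a natural isomorphism $c_{X,P}:\uhom_\C(M,X\otb P)\to X\ot \uhom_\C(M,P)$ in $\C_A$; this again comes from the adjunction \eqref{Hom-interno}, identifying $\Hom_\C(Y,\uhom_\C(M,X\otb P))$ with $\Hom_\M(Y\otb M,X\otb P)$ and, using internal-Hom naturality on the dual of $X$, with $\Hom_\C(Y,X\ot\uhom_\C(M,P))$. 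Verifying the coherence conditions \eqref{modfunctor1}--\eqref{modfunctor2} is then another routine compatibility check.

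Finally, under the assumption that $\M$ is indecomposable exact, I would argue that $F=\uhom_\C(M,-):\M\to\C_A$ is an equivalence. The cleanest route is to exhibit a two-sided adjoint and show unit and counit are isomorphisms. The functor $F$ has a left adjoint $-\otb M:\C_A\to\M$ (sending $A$ to $M$), essentially by the definition \eqref{Hom-interno} extended to $A$-modules; the counit $F(-)\otb M\to \id$ and the unit $\id\to \uhom_\C(M,-\otb M)$ are the natural candidates. Exactness of $\M$ guarantees that $F$ is exact and that $A$ has enough projectivity so that $F$ sends a progenerator to a progenerator, while indecomposability forces $M$ to be a generator in the appropriate sense, so that no nonzero object is killed — here Lemma \ref{action-zero} is exactly what prevents $X\otb M$ from vanishing and hence shows $F$ is faithful on the relevant objects. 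I expect the main obstacle to be precisely this last step: showing the unit and counit are isomorphisms, equivalently that $M$ is a projective generator of $\M$ as an $A$-module and that $F$ is fully faithful and essentially surjective. This is where one genuinely needs the full strength of exactness (to control projectives, via the defining property that $P\otb M$ is projective) together with indecomposability (to rule out a proper direct summand on which $F$ could degenerate), rather than the formal adjunction bookkeeping of the earlier steps; the cited \cite[Thm.\ 3.17]{EO} carries out this analysis in detail.
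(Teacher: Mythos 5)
The paper gives no proof of this proposition: it is quoted from \cite[Thm.~3.17]{EO}, as the bracketed citation and the tombstone ending the statement indicate, so there is no internal argument to compare against. Your sketch faithfully outlines the cited proof's standard ingredients --- the multiplication on $A$ built from evaluation maps and the adjunction \eqref{Hom-interno}, the module-functor structure via duality, and the equivalence via the left adjoint (which should be written $-\ot_A M$, the balanced tensor product over $A$, rather than $-\otb M$), with exactness and indecomposability entering essentially as you describe --- and, like the paper, it defers the substantive verification of the equivalence to \cite[Thm.~3.17]{EO} itself.
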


Using the above Proposition, when dealing with indecomposable exact module categories, we can restricts ourself only with those of the form $\C_A$, for some algebra $A\in \C$. The next result was given in \cite{EO}. %We shall include part of the proof that will be needed later.

\begin{prop}\label{natu-bimod} Let $A, B\in \C$ be algebras such that $\ca_A$ and $\ca_B$ are exact $\ca$-module categories. The functor 
$$\Psi:{}_A\C_B\to \Fun_\C(\C_A, \C_B), $$ defined by $\Psi(D)(X)=X\ot_A D$ is an equivalence of  categories.
\end{prop}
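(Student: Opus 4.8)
The plan is to establish that $F$ is an equivalence by constructing an explicit quasi-inverse and verifying the two are mutually inverse, using the internal Hom machinery of Proposition \ref{homint}. First I would observe that since $\C_A$ and $\C_B$ are indecomposable exact, for any $\C$-module functor $G \in \Fun_\C(\C_A, \C_B)$ the object $G(A) \in \C_B$ carries a natural left $A$-module structure: the algebra $A = \uhom_\C(A,A)$ acts on $A$ internally, and applying $G$ together with the module-functor coherence transfers this to an action of $A$ on $G(A)$ inside $\C_B$. Thus $G(A) \in {}_A\C_B$, and I would define the candidate quasi-inverse by $\widetilde F(G) = G(A)$.

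Next I would verify that $F$ and $\widetilde F$ are mutually inverse. For $\widetilde F \circ F$, note that $F(D)(A) = A \ot_A D \cong D$ by the unit isomorphism $\overline{\lambda}_D$ (or rather its right-module analogue), and Lemma \ref{tech-alg} is exactly the technical identity needed to check that this isomorphism respects the $A$-bimodule structure, giving a natural isomorphism $\widetilde F(F(D)) \cong D$ in ${}_A\C_B$. For $F \circ \widetilde F$, given $G$ I must produce a natural module isomorphism $F(G(A)) \cong G$, i.e.\ $X \ot_A G(A) \cong G(X)$ naturally in $X \in \C_A$. The key point here is that every $X \in \C_A$ admits a presentation, and in fact there is a canonical surjection $X \ot A \to X$ (the action map) whose kernel is generated so that $X \cong X \ot_A A$ as right $A$-modules; applying $G$ and using its module-functor structure $c_{X,A}: G(X \ot_A A) \to \dots$ together with right-exactness of $\ot_A$ yields the comparison map, which one checks is an isomorphism.

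I expect the main obstacle to be the second composite, $F \circ \widetilde F \cong \Id$, specifically establishing that the natural comparison map $X \ot_A G(A) \to G(X)$ is an isomorphism for all $X \in \C_A$ rather than merely for $X = A$. The clean way to handle this is to note both functors $X \mapsto X \ot_A G(A)$ and $X \mapsto G(X)$ are right-exact $\C$-module functors $\C_A \to \C_B$ that agree on the generator $A$ (where both give $G(A)$) and are compatible with the $\C$-action; since every object of $\C_A$ is a cokernel of a map between objects of the form $P \ot A$ with $P \in \C$ (using that $\C_A$ is generated by free modules) and both functors send $P \ot A$ to $P \otb G(A)$ compatibly, a five-lemma / right-exactness argument forces the comparison to be an isomorphism everywhere. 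Exactness of $\C_B$ is what guarantees the relevant functors are well-behaved and that the internal-Hom description of Proposition \ref{homint} applies throughout.

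Finally I would record that $F$ is plainly functorial in $D$ and that the natural isomorphisms constructed above are natural in $G$, so that $F$ and $\widetilde F$ assemble into an equivalence of categories ${}_A\C_B \simeq \Fun_\C(\C_A, \C_B)$.
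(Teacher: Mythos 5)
Your proposal is correct in outline, but it takes a genuinely different route from what the paper actually writes. The paper does not prove the full equivalence at all: it explicitly defers that to \cite{EO} and reproduces only the morphism-level fragment that is needed later, namely that every module natural transformation $\gamma\colon F(D)\to F(C)$ comes from a bimodule morphism, because $\gamma$ is determined by its component at $A$. That fragment is proved by a direct computation: naturality of $\gamma$ is applied to the map $Y\to Y\ot A$ induced by the unit of $A$ (diagram \eqref{nat-gam1}), and Lemma \ref{tech-alg} then yields $\gamma_Y(\id_Y\ot\blambda_D)=\id_Y\ot(\blambda_C\,\gamma_A)$, so $\gamma_Y$ is recovered from $\blambda_C\,\gamma_A\,\blambda_D^{-1}\colon D\to C$. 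This is exactly what Proposition \ref{modcat-to-typeA-datum} uses to turn module natural isomorphisms into the bimodule isomorphisms $\beta$. Your argument instead attacks the object-level statement: you build the quasi-inverse $G\mapsto G(A)$ (with left $A$-action transported through the module-functor structure) and prove $X\ot_A G(A)\simeq G(X)$ by comparing two right-exact functors along the free presentation $X\ot A\ot A\to X\ot A\to X\to 0$, on which the comparison is an isomorphism for free modules $P\ot A$ via $c_{P,A}$. This is essentially the proof in \cite{EO} that the paper cites, so it is a legitimate and more self-contained route: it buys the whole equivalence, whereas the paper's computation buys the explicit description of natural transformations that the later sections actually need.

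One point needs repair before your sketch is sound. Right-exactness of an arbitrary module functor $G\colon\C_A\to\C_B$ is not part of the definition (module functors in this paper only carry coherence isomorphisms, with no exactness assumed), and it does not follow from ``exactness of $\C_B$'' as you assert; exactness of the target is not the relevant hypothesis. What you need is \cite[Proposition 3.11]{EO}: a module functor whose \emph{source} is an exact module category is automatically exact, so the hypothesis doing the work is exactness of $\C_A$. With that citation in place, both rows of your comparison diagram are right exact, the left-hand vertical maps are isomorphisms on free modules, and the five-lemma argument closes; indecomposability, incidentally, is not needed anywhere in this proposition.
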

\pf For any $D\in {}_A\C_B$, the functor $\Psi(D)$ is indeed a $\ca$-module functor, with module structure given by
\begin{equation}\label{c-for-bimodule}
    c^D_{X,Y}: \Psi(D)(X\ot Y)\to X\ot \Psi(D)(Y),
\end{equation} 
$$c^D_{X,Y}= \balph_{X,Y,D}.$$
Recall from \eqref{assoc-A0} the definition of $\balph$. If $D,C\in {}_A\C_B $ and $\gamma:D\to C$ is an $(A,B)$-bimodule morphism, then $\Psi(\gamma)_X:\Psi(D)(X)\to \Psi(C)(X)$ defined as
$$\Psi(\gamma)_X=\id_X\ot \gamma, \  \ X\in \C_A,$$ is a module natural transformation between module functors $\Psi(D)$ and $\Psi(C)$.  Let us prove that $\Psi$ is an equivalence of categories. Define $$\Phi: \Fun_\C(\C_A, \C_B)\to {}_A\C_B,\ \  \Phi(F,c)=F(A).$$  
Then $F(A)$ has a structure of $(A,B)-$bimodule, with left action $\lambda:A\ot F(A)\to F(A)$, $\lambda=F(m_A)c^{-1}_{A,A}$, where $m_A:A\ot A\to A$ is the product of the algebra $A.$
It remains to prove that there are natural isomorphisms
$$\Phi\circ \Psi\sim \Id,\,  \Psi\circ \Phi\sim \Id.$$
The equivalence $\Phi\circ \Psi\sim \Id$ is clear. Now, take $(F,c)\in \Fun_\C(\C_A, \C_B)$. Let us show that there is a natural module isomorphism $F\to -\ot_A F(A)$, and this imply that $ \Psi\circ \Phi\sim \Id.$ 

For any $X\in \ca_A$, define $\tilde{\beta}_X:X\ot F(A)\to F(X)$ as $\tilde{\beta}_X= F(\rho_X)c^{-1}_{X,A}$. Here $\rho_X:X\ot A\to X$ is the right action  on $X$. The map $\tilde{\beta}_X$ factorizes as 
\begin{equation}\label{factoriz-beta}
\xymatrix{X\ot F(A)\ar[rr]^{ \pi_{X, F(A)}}
\ar[dr]_{\tilde{\beta}_X}&& X\ot_A F(A)
\ar[dl]^{\beta_X}\\ & F(X)&}.
\end{equation}
To prove that such $\beta_X$ exists, we have to prove that
\begin{align}\label{coeq-beta}  \tilde{\beta}_X (\rho_X\ot \id)=\tilde{\beta}_X (\id_X\ot \lambda) \alpha_{X,A,F(A)}.
\end{align}
The left hand side of \eqref{coeq-beta} is equal to
\begin{align*} &= F(\rho_X)c^{-1}_{X,A}  (\rho_X\ot \id)\\
&= F(\rho_X) F(\rho_X\ot\id_A) c^{-1}_{X\ot A, A}.
\end{align*}
The second equality follows from the naturality of $c$. The right hand side  of \eqref{coeq-beta} is equal to
\begin{align*} &=  F(\rho_X)c^{-1}_{X,A} (\id_X \ot F(m_A)c^{-1}_{A,A}) \alpha_{X,A,F(A)}\\
&= F(\rho_X) F(\id_X\ot m_A)c^{-1}_{X,A\ot A} (\id_X \ot c^{-1}_{A,A})\alpha_{X,A,F(A)}\\
&= F(\rho_X(\id_X\ot m_A)) F(\alpha_{X,A,A}) c^{-1}_{X\ot A, A}\\
&=  F(\rho_X) F(\rho_X\ot\id_A) c^{-1}_{X\ot A, A}.
\end{align*}
The first equality follows from the definition of $\lambda$, the second one follows from the naturality of $c$, the third one follows from \eqref{modfunctor1}, and the last equality follows from the associativity of $\rho_X.$ This finishes the proof of \eqref{coeq-beta}.

Let us prove that $\beta$ is a module natural transformation. For this we have to prove that for any $X\in \ca$, $M\in \ca_A$ we have
\begin{align}\label{beta-module-nat} c_{X,M} \beta_{X\ot M}= (\id_X\ot \beta_M)   \balph_{X,M,F(A)}.
\end{align}
Indeed
\begin{align*} c_{X,M} \beta_{X\ot M} \pi_{X\ot M,F(A)}&=  c_{X,M}  F(\rho_{X\ot M})c^{-1}_{X\ot M,A}\\
&=  c_{X,M}  F(\id_X\ot \rho_M)F(\alpha_{X,M,A})c^{-1}_{X\ot M,A}\\
&=(\id_X\ot F(\rho_M))c_{X, M\ot A} F(\alpha_{X,M,A})c^{-1}_{M,A}\\
&= (\id_X\ot F(\rho_M)) (\id_X\ot c^{-1}_{M,A} \alpha_{X,M,F(A)}.
\end{align*}
The first equality follows from the definition of $\beta$, the second equality follows from the definition of $\rho_{X\ot M}$,  the third one follows from the naturality of $c$, and the last one follows from \eqref{modfunctor1}. On the other hand 
\begin{align*} (\id_X\ot \beta_M)   \balph_{X,M,F(A)}\pi_{X\ot M,F(A)}&= (\id_X\ot \beta_M) (\id_X\ot \pi_{M,F(A)} \alpha_{X,M,F(A)}\\
&=(\id_X\ot F(\rho_M)) (\id_X\ot c^{-1}_{M,A} \alpha_{X,M,F(A)}.
\end{align*} 
The first equality follows from \eqref{assoc-A0}, and the second one follows from the definition of $\beta$. It remains to prove that $\beta_M$ is an isomorphism for any $M\in\ca_A$. Since $\rho_M$ is an epimorphism, and $F$ is exact,  $ \tilde{\beta}_M$ is an epimorphism. Thus $\beta_M$ is an epimorphism.  It is not difficult to see that if $M$ is a free $A$-module, that is $M=X\ot A$, where the right action of $A$ is given in the second tensorand, then $\beta_M$ is an isomorphism. For arbitrary $M\in \ca_A$ there exists an exact sequence of right $A$-modules
$$X\ot A \to Y\ot A\to M\to 0. $$

Since $\ca_A$ is an exact $\ca$-module category, then $\ot_A$ is biexact, see \cite[Ex. 3.19]{EO}. Then we have a diagramm
\begin{equation*}
\begin{tikzcd}
 \ar{d}{\beta_{X\ot A}} \ar{r} ( X\ot A)\ot_A F(A) &\ar{d}{\beta_{Y\ot A}} ( Y\ot A)\ot_A F(A)\ar{r}{}& M\ot_A F(A)\ar{d}{\beta_M}\ar{r} & 0\\
 \ar{r}  F(X\ot A) &\ar{r} F(Y\ot A) &\ar{r} F(M)\ar{r} & 0,
\end{tikzcd}
\end{equation*}
with both rows exact. Since $\beta_{X\ot A}, \beta_{Y\ot A}$ are isomorphisms, chasing about the diagram, it follows that $\beta_M$ is an isomorphism.\epf
\section{Graded tensor categories}\label{Section:gradedt}

 An important family of examples of
tensor categories comes from group extensions. Given a finite group $G$, a (faithful) \emph{$G$-grading on a  finite tensor category $\D$} is a decomposition
$\D=\oplus_{g\in G} \C_g$, where $\C_g$ are non-zero full abelian subcategories of $\D$ such that
$$\ot:\C_g\times \C_h\to \C_{gh}, \text{ for all }
g, h\in G.$$

In this case, we say that $\D$ is  a $G$-\emph{extension} of $\C:=\C_1$. These extensions were studied and classified in \cite{ENO} in terms of the Brauer-Picard group of the category $\C$ and certain cohomological data.

For any subgroup $H\subseteq G$, we define $\ca_H=\oplus_{g\in H} \C_g$. It is a tensor subcategory of $\D$.

\begin{exa}\label{pointed-fusion} Let $G$ be a finite group  and  let $\omega\in H^3(G,\ku^{\times})$ be a 3-cocycle. The category $\C(G,\omega)$ has as objects finite dimensional spaces $V$ equipped with a  $G$-grading of vector spaces, this means that $V=\oplus_{g\in G} V_g.$ The associativity constraint is defined by
$$a_{X,Y,Z}((x\ot y)\ot z)= \omega(g,h,f)\, x\ot (y\ot z),$$
for any $X, Y, Z\in \C(G,\omega)$, and any homogeneous elements $x\in X_g, y\in Y_h, z\in Z_f$, $g,f,h\in G$. The tensor category $\C(G,\omega)$ is an example of a $G$-extension of the category of finite dimensional vector spaces $\vect_\ku$. More precisely, $\C(G,\omega)= \oplus_{g\in G} \vect_g$, where $\vect_g$ denotes the category of finite dimensional vector spaces supported in the component $g$.
\end{exa}

We list some important properties of graded tensor categories.

\begin{prop}
\label{Res} Assume $\D=\oplus_{g\in G} \C_g$ is a $G$-graded extension of $\C$. Let $g,h\in G$. The following statements hold. 
\begin{itemize}
    \item[1.]

 The tensor product of $\D$  induces an equivalence of $\C$-bimodules \begin{equation}\label{MM} M_{g,h}:\C_g\btc\C_h\to\C_{gh}, \quad M_{g,h}(X\boxtimes Y)= X\ot Y.\end{equation}

\item[2.] For any subgroup $H\subseteq G$, the tensor product of $\Do$ induces an equivalence of $\C$-bimodules $\ca_{gH}\simeq \ca_g\btc \ca_H$.

\item[3.]  For any $\D$-module $\N$, the action functor induces an equivalence $\C_g\btc\N\simeq\N$ as $\C$-modules.
 
\end{itemize}
\end{prop}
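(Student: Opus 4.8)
The plan is to prove the three statements about a $G$-graded extension $\D=\oplus_{g\in G}\C_g$, each of which asserts an equivalence of $\C$-bimodule (or $\C$-module) categories induced by the tensor product of $\D$. The unifying strategy is that all three are consequences of a single fact: the grading of $\D$ means that the tensor functor $\ot:\C_g\times\C_h\to\C_{gh}$ is \emph{balanced} over the subcategory $\C=\C_1$ and realizes $\C_{gh}$ as a relative tensor product. I would first treat statement 1 directly, then derive statements 2 and 3 from it by iteration and by specializing to the trivial grading component.

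For statement 1, the plan is to exhibit an explicit inverse to $M_{g,h}$. The functor $\ot:\C_g\times\C_h\to\C_{gh}$ is $\C$-balanced in the sense that $(X\ot C)\ot Y\simeq X\ot(C\ot Y)$ naturally for $C\in\C$, via the associativity constraint $\alpha$, so by the universal property of the balanced tensor product $\btc$ (as developed in \cite{DSS}) it factors through a well-defined functor $M_{g,h}:\C_g\btc\C_h\to\C_{gh}$, which is automatically a morphism of $\C$-bimodules since the outer left $\C$-action on $\C_g$ and right $\C$-action on $\C_h$ are preserved by $\ot$. To see $M_{g,h}$ is an equivalence, I would use that every object of $\C_{gh}$ is a retract of, or built from, objects of the form $X\ot Y$ with $X\in\C_g$, $Y\in\C_h$: indeed for any $W\in\C_{gh}$ one recovers $W$ from $W\ot{}^{*}Y\boxtimes Y$-type expressions using duality in $\D$, since ${}^{*}Y\in\C_{h^{-1}}$ and $W\ot{}^{*}Y\in\C_g$. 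Checking full faithfulness then reduces to comparing $\Hom$ spaces, which the balancing identifies; rigidity of $\D$ (every object has a dual, with ${}^{*}\C_g=\C_{g^{-1}}$) is what makes essential surjectivity and faithfulness work.

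Statements 2 and 3 then follow formally. For statement 2, I would write $\C_{gH}=\oplus_{f\in H}\C_{gf}$ and apply statement 1 componentwise: $\C_{gf}\simeq\C_g\btc\C_f$ as $\C$-bimodules, and assembling these over $f\in H$ gives $\C_{gH}\simeq\C_g\btc\C_H$, where one checks the balanced tensor product commutes with the direct sum decomposition indexing $\C_H=\oplus_{f\in H}\C_f$; the associativity of $\btc$ and the compatibility of the equivalences $M_{g,f}$ with the $\C$-bimodule structure are what guarantee the pieces glue into a single $\C$-bimodule equivalence. For statement 3, given a $\D$-module $\N$, the action functor $\C_g\times\N\to\N$ is again $\C$-balanced via the module associativity isomorphism $m$, hence factors through $\C_g\btc\N\to\N$; this is an equivalence because the inverse action by ${}^{*}\C_g=\C_{g^{-1}}$ together with the unit isomorphism $\ell$ and the evaluation/coevaluation of $\D$ furnish a quasi-inverse, exactly as in statement 1.

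I expect the main obstacle to be statement 1, specifically verifying that $M_{g,h}$ is fully faithful rather than merely essentially surjective. Essential surjectivity is comparatively clean from rigidity, but full faithfulness requires showing that the balanced $\Hom$-spaces of $\C_g\btc\C_h$ computed via the relative tensor product (a suitable coequalizer/geometric realization in the sense of \cite{DSS}) coincide with $\Hom$-spaces in $\C_{gh}$. The delicate point is that one must know the action of $\C$ on $\C_g$ and $\C_h$ is exact and that $\btc$ is computed by the expected bar-type resolution; I would lean on the theory of \cite{DSS} together with the rigidity and exactness of the tensor product in the finite tensor category $\D$ to control these $\Hom$-spaces, and this is where the bulk of the careful work lies.
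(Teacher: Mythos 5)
Your handling of parts 2 and 3 matches the paper's own proof: part 2 is deduced componentwise from part 1 via the decomposition $\C_{gH}=\oplus_{f\in H}\C_{gf}$ (the paper's entire proof of part 2 is that it follows from \eqref{MM}), and your quasi-inverse for part 3 --- acting back by $\C_{g^{-1}}$ and using $\C_{g^{-1}}\btc\C_g\simeq\C$ together with the unit constraint --- is precisely the argument of \cite[Lemma 10]{MM} that the paper invokes. The divergence is in part 1. There the paper gives no argument of its own: it asserts that the proof of \cite[Theorem 6.1]{ENO} carries over \emph{mutatis mutandis} to the non-semisimple setting. You instead attempt a direct proof, and your sketch has two genuine gaps.

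First, essential surjectivity: you claim every $W\in\C_{gh}$ is a retract of some $X\ot Y$ with $X\in\C_g$, $Y\in\C_h$. In a finite non-semisimple tensor category this is false in general. What rigidity actually gives is that $\id_W\ot coev_Y$ embeds $W$ as a \emph{subobject} of $(W\ot{}^{*}Y)\ot Y$ --- this is exactly the monomorphism trick of Lemma~\ref{action-zero} --- and subobjects need not split. So you only learn that every object of $\C_{gh}$ embeds into the essential image of $M_{g,h}$; to conclude essential surjectivity you would additionally need that image to be closed under subobjects, which your argument never provides. Second, full faithfulness: you correctly flag it as the crux, but then defer it wholesale to ``the theory of \cite{DSS} together with rigidity and exactness,'' which is a pointer rather than a proof; identifying the balanced Hom spaces with Hom spaces in $\C_{gh}$ is exactly the content of the ENO theorem the paper cites, so nothing has been gained. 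A repair that genuinely works here runs through internal Homs rather than retracts: $\C_h$ is an exact indecomposable $\C$-module (exactness by restriction as in Lemma~\ref{ind-rest}, indecomposability by the same coevaluation-subobject argument), hence $\C_h\simeq\C_A$ with $A=\uhom_\C(M,M)$ for any $0\neq M\in\C_h$; then $\C_g\btc\C_h\simeq\C_g\btc\C_A\simeq(\C_g)_A$ (cf.\ Lemma~\ref{action-equiv0}), while $W\mapsto\uhom_\D(M,W)$ identifies $\C_{gh}$ with $(\C_g)_A$ as well, since the equivalence $\uhom_\D(M,-):\D\to\D_A$ of Proposition~\ref{homint} respects the gradings. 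As it stands, your proposal establishes parts 2 and 3 only conditionally on part 1, and part 1 remains unproven.
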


\begin{proof} 1.  It follows, \textit{mutatis mutandis}, from the proof of  \cite[Theorem 6.1]{ENO}, in the non-semisimple case. 

2. It follows from \eqref{MM}.

3. It follows using  the argument of \cite[Lemma 10]{MM}.

\end{proof}

\subsection{Induction and restriction of module categories}

Let $\D$ be a tensor category, and let $\C$ be a tensor subcategory of $\D$. Assume $\N$ is a $\C$-module.
We shall denote by $\Ind_{\C}^\D(\N):=\D\bt_{\C}\N$ the \emph{induced $\D$-module}, where the $\D$-action is induced by the tensor product of $\D$ \cite[Proposition 2.13]{Ga}. Let $\M$ be a $\D$-module, we shall denote by $Res^\D_\C\M$ the \emph{restricted $\C$-module}.

The following results seem to be well-known. The proofs follow the same line as in \cite{MM},  we include them  for completeness' sake. Part of it is contained in \cite[Proposition 12]{MM}, \cite[Corollary 9]{MM}, see also \cite[Proposition 3.7]{Ga}.

\begin{prop}\label{ind-sub} Assume we have a decomposition $\D=\C\oplus \C'$ as abelian categories, such that $\C$ is a tensor subcategory. Let  $\M$ be an indecomposable exact $\D$-module such that it decomposes as $\M=\oplus^n_{i=1} \M_i$, where $\M_i$ are indecomposable exact $\C$-modules. Assume also that every time we choose non-zero objects $X\in \C'$, $N\in \M_1$, then $X\otb N\in \oplus^n_{i=2} \M_i$. Take $0\neq M\in \M_1$, and $A=\underline{\Hom}_\D(M,M)$. Then $A\in \C$, $\M_1\simeq \C_A$, and there is an equivalence of $\D$-modules
$$\M\simeq \Ind_{\C}^\D(\C_A).$$
\end{prop}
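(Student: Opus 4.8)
The plan is to prove the three assertions in sequence, extracting the key structural facts from the hypotheses. First I would show that $A=\uhom_\D(M,M)$ lies in $\C$. The internal Hom over $\D$ decomposes along the grading as $A=\oplus_{g\in G}\uhom_\D(M,M)_g$, where the $g$-component represents the functor $\Hom_\M((-)\otb M,M)$ restricted to $\C_g$. For $g\neq 1$, a homogeneous component $\uhom_\D(M,M)_g\in\C_g$ must vanish: by the defining adjunction \eqref{Hom-interno}, a nonzero map out of it would produce a nonzero morphism $X\otb M\to M$ in $\M$ with $X\in\C_g\subseteq\C'$, but the hypothesis that $X\otb N\in\oplus_{i=2}^n\M_i$ for $N\in\M_1$ (together with $M\in\M_1$) forces $\Hom_\M(X\otb M,M)=0$ since there are no nonzero morphisms between objects of $\M_1$ and objects of $\oplus_{i\geq 2}\M_i$. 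Hence only the identity component survives and $A\in\C$.

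\smallbreak

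Next I would identify $\M_1\simeq\C_A$. Since $\M$ is an indecomposable exact $\D$-module, its restriction $\Res_\C^\D\M$ is exact over $\C$, and each summand $\M_i$ is an indecomposable exact $\C$-module by hypothesis. Because $A\in\C$, the internal Hom over $\D$ computed at $M\in\M_1$ agrees with the internal Hom over $\C$ of $M$ regarded as an object of the $\C$-module $\M_1$; this is precisely the content of the vanishing of the off-diagonal homogeneous components established above, which says $\Hom_\C(X,A)\simeq\Hom_{\M_1}(X\otb M,M)$ for all $X\in\C$. Proposition \ref{homint} then gives an equivalence of $\C$-modules $\uhom_\C(M,-):\M_1\to\C_A$, since $\M_1$ is indecomposable exact and $M\neq 0$.

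\smallbreak

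Finally I would assemble the global equivalence. Having $\M_1\simeq\C_A$ as $\C$-modules, I would invoke Lemma \ref{acti-equivalence}: under the standing hypotheses, $\M$ is an indecomposable exact $\D$-module whose restriction decomposes into indecomposable exact $\C$-modules, so the action functor induces an equivalence of $\D$-modules $\D\boxtimes_\C\M_1\simeq\M$. Substituting $\M_1\simeq\C_A$ yields
$$\M\simeq\D\boxtimes_\C\C_A=\Ind_\C^\D(\C_A),$$
which is the claim. The decomposition $\D=\C\oplus\C'$ as abelian categories guarantees the grading-type argument of the first step applies even though here $G$ need only provide the coarse splitting into $\C$ and $\C'$.

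\smallbreak

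The main obstacle I expect is the first step: carefully justifying that the off-diagonal internal-Hom components vanish, i.e.\ that $A\in\C$. This requires translating the abelian decomposition hypothesis $X\otb N\in\oplus_{i=2}^n\M_i$ into a statement about homogeneous components of the internal Hom via the adjunction \eqref{Hom-interno}, and checking that $\Hom_\M$ between objects living in distinct summands $\M_1$ and $\oplus_{i\geq 2}\M_i$ vanishes (which follows because the $\M_i$ are the indecomposable summands of $\Res_\C^\D\M$ as an abelian category, so there are no nonzero morphisms across distinct summands). Once $A\in\C$ is secured, the remaining two steps follow formally from Propositions \ref{homint} and Lemma \ref{acti-equivalence}.
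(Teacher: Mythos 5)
Your proof is correct and follows essentially the same route as the paper: establish $A\in\C$ via the adjunction \eqref{Hom-interno} together with the vanishing of morphisms between the distinct summands $\M_1$ and $\oplus_{i\geq 2}\M_i$, identify $A\simeq\uhom_\C(M,M)$ so that Proposition \ref{homint} yields $\M_1\simeq\C_A$, and conclude with Lemma \ref{acti-equivalence}. The only cosmetic difference is your phrasing of the first step in terms of homogeneous $G$-graded components, whereas the paper argues directly with the coarse splitting $\D=\C\oplus\C'$; as you note yourself, the argument is the same.
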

\begin{proof} Take arbitrary $V\in \C'$. Then by  \eqref{Hom-interno}
$$\Hom_\D(V, A)\simeq \Hom_\M(V\otb M, M).$$
Since $V\otb M \in \oplus^n_{i=2} \M_i$, then $\Hom_\D(V, A)=0$, and $A\in \C$. If $X\in \C$, there are isomorphisms
$$\Hom_\C(X, \underline{\Hom}_\C(M,M))\simeq \Hom_{\M_1}(X\otb M, M), $$
$$\Hom_\D(X, \underline{\Hom}_\D(M,M))\simeq \Hom_{\M}(X\otb M, M). $$
Since $\Hom_{\M_1}(X\otb M, M)=\Hom_{\M}(X\otb M, M)$, then 
$$\Hom_\C(X, \underline{\Hom}_\C(M,M))\simeq
\Hom_\D(X, A)\simeq \Hom_\C(X, A) .$$
Whence, $A=\underline{\Hom}_\C(M,M)$, and $\M_1\simeq \C_A$ as $\C$-modules.  Since $\Mo\simeq \Do_A$ as $\Do$-module categories, then the equivalence $\M\simeq \Ind_{\C}^\D(\C_A)$ follows.
\end{proof}

\begin{lema}\label{acti-equivalence} Let $\D$ be a $G$-graded extension of $\C$ and let $\Mo$ be an indecomposable exact $\Do$-module such that $\Res_\C^\D\M$  decomposes as $\oplus^n_{i=1} \M_i$, where $\M_i$ are indecomposable exact $\C$-modules.  Let $H:=\text{Stab}(\M_1)=\{f\in G: \C_f\btc\M_1\simeq\M_1\text{ as $\C$-module categories}\}$. Then by restriction $\Mo_1$ has a structure of $\ca_H$-module category and the action functor induces an equivalence $\Do \boxtimes_{\ca_H} \Mo_1 \simeq \Mo$ as $\Do$-modules.
\end{lema} 
\pf  
By Proposition \ref{Res}(3), for each $g\in G$, $\C_g\btc\Res_\C^\D\M\simeq\Res_\C^\D\M$, then $$\bigoplus_{i=1}^n\C_g\btc \M_i\simeq \bigoplus_{i=1}^n \M_i.$$
For any $g\in G$ and $i$ there exist an index $i_g\in \{1,...,n\}$ such that $\C_g\btc \M_i\simeq \M_{i_g}$ as $\C$-modules. This imply that  $G$ acts on the index set $\{1,...,n\}=X$. 

 If this action is not transitive then we can split the set $X$ into two disjoint sets $X=X_1\cup X_2$, where $X_1$ is the $G$-orbit of some $i\in X$, and $X_2\neq \emptyset$. Define $\M_1=\bigoplus_{j\in X_1}\M_j$ and $\M_2=\bigoplus_{j\in X_2}\M_j$. Hence $\M=\M_1\oplus\M_2$.  Therefore $\M_1$ is a $\D$-submodule category of $\M$, which is an absurd since $\M$ is exact indecomposable. 
The equivalence now follows from Proposition \ref{ind-sub}.
\epf

In the following Lemma we include some properties of the induced and restricted modules categories.

\begin{lema}\label{ind-rest} Assume $\D=\oplus_{g\in G} \C_g$ is a $G$-graded extension of $\C=\C_1$. Let $\N$ be a $\C$-module and $\M$ a $\D$-module. The following statements hold.
\begin{enumerate}

    \item[1.] $\M$ is an exact $\D$-module, if and only if,  $Res^\D_\C\M$ is an exact $\C$-module. 
    
    \item[2.] If $\N$ is exact (indecomposable) $\C$-module then $\Ind_{\C}^\D(\N)$ is exact (indecomposable) $\D$-module.
     \item[3.]  If $Res_\C^\D\M$ is an indecomposable $\C$-module then $\M$ is an indecomposable $\D$-module. 
\end{enumerate}

\end{lema}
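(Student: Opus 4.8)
The plan is to prove the three statements of Lemma \ref{ind-rest}, relying on the structural results already established, especially Proposition \ref{Res} and Lemma \ref{acti-equivalence}. For part (1), I would exploit the fact that exactness is tested against projective objects. Since $\D=\oplus_{g\in G}\C_g$ is finite, a projective object $P\in\D$ decomposes as $P=\oplus_{g\in G} P_g$ with $P_g\in\C_g$. The forward direction is the easier one: if $\M$ is exact, then for a projective $Q\in\C$ and $M\in\M$, the object $Q\otb M$ is projective in $\M$, and this is exactly the exactness condition for $\Res^\D_\C\M$. For the converse, suppose $\Res^\D_\C\M$ is exact. Given a projective $P\in\D$, I would use Proposition \ref{Res}(3), which gives $\C_g\btc\M\simeq\M$ as $\C$-modules via the action functor; the key point is that this equivalence transports projectivity, so that acting by the homogeneous component $P_g\in\C_g$ preserves projective objects just as acting by $\C=\C_1$ does. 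Summing over $g\in G$, I would conclude that $P\otb M$ is projective in $\M$ for every $M$, hence $\M$ is exact.

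For part (2), exactness of $\Ind^\D_\C(\N)=\D\bt_\C\N$ follows by a direct computation once one knows how projectives in the induced module look: a projective $P\in\D$ acts on $\D\bt_\C\N$ through the tensor product of $\D$, and using the decomposition $P=\oplus_g P_g$ together with Proposition \ref{Res}(2), which expresses $\ca_{gH}\simeq\ca_g\btc\ca_H$, one reduces the preservation of projectives to the exactness of $\N$ over $\C$. For indecomposability, I would argue that if $\N$ is an indecomposable $\C$-module and $\D\bt_\C\N$ decomposed nontrivially as a $\D$-module, then restricting this decomposition to $\C$ and invoking Proposition \ref{Res}(3) (so that each $\C_g\btc\N\simeq\N$) would force $\N$ itself to decompose, contradicting indecomposability. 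This is essentially the reverse of the transitivity argument used in Lemma \ref{acti-equivalence}.

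For part (3), the statement is a clean contrapositive-style argument: if $\M$ were decomposable as a $\D$-module, say $\M\simeq\M'\oplus\M''$ with both summands nonzero $\D$-modules, then restricting to $\C$ yields $\Res^\D_\C\M\simeq\Res^\D_\C\M'\oplus\Res^\D_\C\M''$. Here I would invoke Lemma \ref{action-zero} to guarantee that neither restricted summand is zero (a nonzero $\D$-module restricts to a nonzero $\C$-module, since the unit object $\uno\in\C$ acts as the identity). This contradicts the assumed indecomposability of $\Res^\D_\C\M$, so $\M$ must be indecomposable.

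I expect the main obstacle to be the converse direction of part (1), specifically making rigorous that acting by a homogeneous component $P_g\in\C_g$ preserves projectivity in $\M$. The subtlety is that the equivalence $\C_g\btc\Res^\D_\C\M\simeq\Res^\D_\C\M$ from Proposition \ref{Res}(3) is an equivalence of $\C$-modules, and I must carefully translate ``$P_g\otb(-)$ sends projectives to projectives'' through this equivalence; the cleanest route is likely to observe that $P_g$ being a projective object of $\D$ is detected componentwise and that the action functor $\C_g\times\M\to\M$ inherits exactness and projectivity-preservation from the ambient exactness of the $\C$-action, possibly using that $\C_g$ is an invertible $\C$-bimodule so that $P_g\otb(-)$ admits both adjoints given by acting with the dual component. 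Once this projectivity-transport is pinned down, the remaining steps are routine bookkeeping with the grading decomposition.
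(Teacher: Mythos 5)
Your part (3) and the easy direction of part (1) are fine and agree with the paper, but both hard steps of your proposal --- the implication ``$\Res^\D_\C\M$ exact $\Rightarrow \M$ exact'' in (1), and the exactness of $\Ind_\C^\D(\N)$ in (2) --- rest on the same mechanism, and that mechanism has a genuine gap. You propose that, since $\C_g$ is an invertible $\C$-bimodule, the functor $P_g\otb(-)$ has exact adjoints (action by a dual component) and therefore ``preserves projectivity''. What that argument proves is only that $P_g\otb Q$ is projective whenever $Q\in\M$ is \emph{already projective}. Exactness is strictly stronger: it demands that $P_g\otb M$ be projective for \emph{every} $M\in\M$, projective or not, and the adjoint argument says nothing about non-projective $M$. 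The appeal to Proposition \ref{Res}(3) does not repair this: the equivalence $\C_g\btc\M\simeq\M$ transports projectivity of \emph{objects}, so to use it you would need to know beforehand that $P_g\boxtimes M$ is projective in $\C_g\btc\M$, which is essentially the statement being proved. The same unproved reduction is hidden in your part (2), where ``one reduces the preservation of projectives to the exactness of $\N$ over $\C$'' via Proposition \ref{Res}(2).

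The gap can be closed, but it needs an extra idea. For instance: for $P_g\in\C_g$ projective and $M\in\M$ arbitrary, the zigzag identities exhibit $P_g$ as a retract of $(P_g\ot P_g^*)\ot P_g$, hence $P_g\otb M$ is a retract of $(P_g\ot P_g^*)\otb(P_g\otb M)$; now $P_g\ot P_g^*$ lies in $\C_1=\C$ and is projective (projective $\ot$ anything is projective in a finite tensor category), so exactness of $\Res^\D_\C\M$ makes $(P_g\ot P_g^*)\otb(P_g\otb M)$ projective, and a retract of a projective is projective. The paper itself avoids the issue entirely by citation: for (1) it quotes \cite[Corollary 2.5]{EG}, and for (2) it quotes \cite[Lemma 3.30]{EO} together with $\End_\C(\C_g)\simeq\C$ from \cite[Prop. 4.2]{ENO}. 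Finally, your indecomposability argument in (2) has the right spirit but is under-specified: the danger is not that the decomposition of $\Ind_\C^\D(\N)$ splits $\N$ itself, but that the various indecomposable components $\C_g\btc\N$ of the restriction are distributed among the two summands. The correct completion (as in the paper) pins the unit component $\N$ inside one summand $\M_1$, then takes $0\neq M\in\M_2$ homogeneous of degree $g$ and $0\neq Y\in\C_{g^{-1}}$, and uses Lemma \ref{action-zero} to produce $0\neq Y\otb M\in\N\cap\M_2$, a contradiction.
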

\begin{proof} 1. Since $\C\subset\D$ is a tensor subcategory, then it follows from \cite[Corollary 2.5]{EG},  that  if $\Res_\C^\D\M$ is exact $\C$-module, then $\M$ is exact as a $\D$-module. Now, assume $\M$ is exact as a $\D$-module. Let be $P\in\C$ a projective object  and $X\in\Res_\C^\D\M$. Since $P\in\D$ is projective, then $P\otb X$ is projective in $Res^\D_\C\M$, hence $Res^\D_\C\M$ is exact.

2. To prove the exactness of  $\Ind_\C^\D\N$ we follow the argument of \cite[Prop. 2.10]{DN}. Since $\Ind_\C^\D\N= \oplus_{g\in G} \C_g \boxtimes_\C \N$, then $\Ind_\C^\D\N$ is an exact $\D$-module, if and only if, $\C_g \boxtimes_\C \N$ is an exact $\C$-module for any $g\in G$. It follows from \cite[Lemma 3.30]{EO} that $\C_g \boxtimes_\C \N$ is an exact $\End_\C(\C_g)$-module. Using \cite[Prop. 4.2]{ENO}, since $\C_g$ is an invertible $\C$-bimodule,    $\End_\C(\C_g)\simeq \C$. Hence $\C_g \boxtimes_\C \N$ is an exact $\C$-module.

Assume know that $\N$  is an indecomposable $\C$-module, and  we can decompose $\Ind_\C^\D\N=\M_1\oplus \M_2$ as $\D$-modules. By restriction $\N\simeq \uno\bt_\C\N= \N_1\oplus\N_2$ as $\C$-modules, then $\N_1=0$ or $\N_2=0$. Suppose $\N_2=0$, thus $\N\subset \M_1$. Take a non-zero object $0\neq M\in \M_2$. We can assume $M\in \C_g\bt_\C\N$, for some $g\in G$.  Take $0\neq Y\in\C_{g^{-1}}$, then, by Lemma \ref{action-zero} $0\neq Y\overline\ot M\in\N$ is a non-zero object. Since the restriction of the tensor product maps $\C_{g^{-1}}\times \C_g\to \C$, then $Y\overline\ot M\in \N_2$, which contradicts our assumption.

3. It follows straightforward.
\end{proof}

\section{Module categories over $G$-graded tensor categories}\label{Section:modograd}

Let $G$ be a finite group and $\D=\oplus_{g\in G} \C_g$ be a $G$-graded extension of $\C=\ca_1$. 
In \cite{MM}, \cite{Ga}, \cite{Ga2} the authors, independently, classify semisimple indecomposable modules over $\D$ in 
terms of exact $\ca$-modules and certain cohomological data. In this section we shall present the classification of exact indecomposable $\D$-module categories. We shall not assume that $\D$ is strict, since later, we want to apply the classification on non strict tensor categories. 

For any algebra $A\in \Do$, and any $g\in G$, the category $(\ca_g)_A$ is the category of right $A$-modules inside $\ca_g.$
\begin{lema}\label{action-equiv0}  Assume $\C_A$ is an indecomposable exact $\C$-module. For any $g\in G$, $\C_g\boxtimes_\ca \C_A$ is an indecomposable exact $\C$-module and the tensor product of $\D$ induces an equivalence of left $\ca$-modules $$(\C_g)_A\simeq \C_g\boxtimes_\ca \C_A,$$ where $(\C_g)_A$ is the category of $A$-modules inside $\C_g$. 
\end{lema}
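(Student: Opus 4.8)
The plan is to treat the two assertions separately: first that $\C_g\btc\C_A$ is an exact indecomposable $\C$-module, and then that the tensor product of $\D$ induces the stated left $\C$-module equivalence $(\C_g)_A\simeq\C_g\btc\C_A$.

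For exactness and indecomposability I would recycle the reasoning already used in the proof of Lemma~\ref{ind-rest}(2). Exactness: by \cite[Lemma 3.30]{EO} the category $\C_g\btc\C_A$ is an exact module over $\End_\C(\C_g)$, and since $\C_g$ is an invertible $\C$-bimodule, \cite[Prop. 4.2]{ENO} gives $\End_\C(\C_g)\simeq\C$, so $\C_g\btc\C_A$ is exact over $\C$. Indecomposability: invertibility of $\C_g$ together with Proposition~\ref{Res}(1), which yields $\C_g\btc\C_{g^{-1}}\simeq\C_{gg^{-1}}=\C$, shows that $\C_g\btc-$ is an autoequivalence of the $2$-category of left $\C$-modules with quasi-inverse $\C_{g^{-1}}\btc-$. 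An equivalence carries nontrivial decompositions to nontrivial decompositions, so were $\C_g\btc\C_A\simeq\M_1\oplus\M_2$ with both summands nonzero, applying $\C_{g^{-1}}\btc-$ would exhibit a nontrivial decomposition of $\C_A$, contradicting its indecomposability.

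For the equivalence I would first produce the comparison functor from the universal property of $\btc$. The assignment $(X,M)\mapsto X\ot M$, with $A$ acting on the second tensorand, defines a functor $\C_g\times\C_A\to(\C_g)_A$ (note $X\ot M\in\C_g\ot\C_1\subseteq\C_g$, and the $A$-action $(X\ot M)\ot A\xrightarrow{\alpha}X\ot(M\ot A)\xrightarrow{\id\ot\rho_M}X\ot M$ is well defined). Using the associativity constraint $\alpha$ of $\D$ one checks this functor is $\C$-balanced, $\ku$-bilinear, and right exact in each variable, so by the universal property of the relative tensor product \cite{DSS} it factors through a functor $\Phi\colon\C_g\btc\C_A\to(\C_g)_A$. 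This $\Phi$ is a functor of left $\C$-modules, since the left $\C$-action on $\C_g\btc\C_A$ is the one inherited from $\C_g$, which is exactly what the first tensorand carries.

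It remains to prove that $\Phi$ is an equivalence; conceptually this is the familiar identification $\M\btc\C_A\simeq\M_A$ of the relative tensor product of a right $\C$-module $\M$ with $\C_A$ with the category of right $A$-modules in $\M$, specialized to $\M=\C_g$. To argue it directly and keep the proof self-contained I would verify essential surjectivity and full faithfulness. Essential surjectivity is easy: every $W\in(\C_g)_A$ is the cokernel of a morphism of free modules $V'\ot A\to V\ot A$ with $V,V'\in\C_g$ (the bar presentation of an $A$-module), and each free module $V\ot A=\Phi(V\btc A)$ lies in the image, so $W$ does too by right exactness of $\Phi$. Full faithfulness is the main obstacle: using the coequalizer/coend description of $\btc$ one must identify $\Hom_{\C_g\btc\C_A}(V\btc A,V'\btc A)$ with $\Hom_{(\C_g)_A}(V\ot A,V'\ot A)$ and extend to arbitrary objects via the presentations above together with right exactness. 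The difficulty here is not conceptual but bookkeeping: one has to match the morphism spaces of the relative tensor product, which are governed by the associativity isomorphisms $\balph,\walph$ recorded in \eqref{assoc-A0}--\eqref{assoc-A}, against $A$-module morphisms inside $\C_g$, and that is where essentially all of the work lies.
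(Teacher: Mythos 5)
Your first half is fine and essentially coincides with the paper: exactness of $\C_g\btc\C_A$ via \cite[Lemma 3.30]{EO} together with $\End_\C(\C_g)\simeq\C$ from \cite[Prop. 4.2]{ENO} is exactly the argument of Lemma \ref{ind-rest}(2) that the paper recycles, and your indecomposability argument (that $\C_g\btc-$ is an equivalence with quasi-inverse $\C_{g^{-1}}\btc-$, using Proposition \ref{Res}(1), hence preserves nontrivial decompositions) is correct and, if anything, more explicit than the paper's citation.

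The problem is the equivalence $(\C_g)_A\simeq\C_g\btc\C_A$, where your proposal stops short of a proof. Constructing $\Phi\colon\C_g\btc\C_A\to(\C_g)_A$ from the universal property of $\btc$ is the easy part; the entire content of the statement is that $\Phi$ is an equivalence, and this is exactly what you defer: full faithfulness is not ``bookkeeping'' but is the theorem itself --- identifying $\Hom$-spaces in a relative tensor product of finite, possibly non-semisimple module categories is a genuine result (it is proved in \cite{DSS}, where $\M\btc\C_A\simeq\M_A$ is established), and no argument for it appears in your proposal. Moreover, your essential surjectivity argument silently presupposes part of the fullness you deferred: to conclude that $W=\operatorname{coker}(V'\ot A\to V\ot A)$ lies in the essential image by right exactness of $\Phi$, you must first know that the bar differential itself is of the form $\Phi(f)$ for some morphism $f$ in $\C_g\btc\C_A$ (this can be checked --- one summand is $\rho_W\btc\id_A$ and the other is obtained from the balancing constraint and $\id_W\btc m_A$ --- but you do not address it). Note that the paper takes a route that never touches $\Hom$-spaces in $\btc$: following \cite[Lemma 24]{MM}, it views $\oplus_{h}(\C_h)_A$ as an indecomposable exact $\D$-module, applies Proposition \ref{ind-sub} (internal Hom machinery, with $M=A$ and $\underline{\Hom}_\D(A,A)=A$) together with Lemma \ref{acti-equivalence} to conclude that the action functor gives $\D_A\simeq\Ind_\C^\D(\C_A)=\oplus_h\C_h\btc\C_A$, and then reads off the degree-$g$ component. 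To repair your version, either cite \cite{DSS} for $\M\btc\C_A\simeq\M_A$ outright instead of attempting a self-contained verification, or switch to the paper's internal-Hom argument.
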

\pf In the proof of Lemma \ref{ind-rest} (2), we proved that $\C_g\boxtimes_\ca \C_A$ is an indecomposable exact $\C$-module.
The proof of the last equivalence follows, {\it mutatis mutandis}, from the proof of \cite[Lemma 24]{MM} in the non-semisimple case, using Proposition \ref{ind-sub}.
\epf

Assume $A\in\C$ is an algebra such that $\C_A$ is a  $\D$-module with action given by $\otb:\D\times \ca_A\to \ca_A$. We also assume that the restriction of this action of $\C$ is the monoidal product. That is $\otb\mid_{\ca}:\ca\times \ca_A\to \ca_A$ equals the monoidal product $\ot.$ The restriction of  action functor $\otb:\D\times \C_A\to \C_A$ to the category $\C_g \times \C_A $ gives $\C$-module functors
$$\otb_g:\C_g \boxtimes_\ca \C_A \to \C_A, \text{ for any }g\in G.$$ Since $\Res^\Do_{\C} \C_A=\C_A$, then $\otb_1$ is the restriction of the tensor product of $\C$.
\begin{lema}\label{action-equiv} Under the above hypothesis, for $f,g,h\in G$, there exist natural $\ca$-module isomorphisms
\begin{equation*}%\label{associat-mod-gr} 
m_{f,g}: \otb_{fg} (M_{f,g}\boxtimes \Id_{\C_A})\to \otb_f (\Id_{\C_f}\boxtimes \otb_g), \end{equation*}
such that for any $X\in\C_f,Y\in\C_g,Z\in\C_h$
\begin{multline} \label{mdgt1} (m_{f,g})_{X,Y,Z\otb M} (m_{fg,h})_{X\ot Y,Z,  M} =\\
=(\id_X\otb (m_{g,h})_{Y,Z,  M}) (m_{f,gh})_{X,Y\ot Z,  M} (\alpha_{X,Y,Z}\otb \id_M).
\end{multline}
\end{lema}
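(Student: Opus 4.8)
The plan is to construct the isomorphisms $m_{f,g}$ by leveraging Proposition \ref{Res}(1), which tells us that the tensor product induces an equivalence $M_{f,g}:\C_f\btc\C_g\to\C_{fg}$ of $\C$-bimodules, together with the fact that $\C_A=\Res^\Do_\C\C_A$ is a $\D$-module. First I would observe that both composites in the statement, namely $\otb_{fg}(M_{f,g}\bt\Id_{\C_A})$ and $\otb_f(\Id_{\C_f}\bt\otb_g)$, are $\C$-module functors from $\C_f\btc\C_g\btc\C_A$ to $\C_A$, by the composition rule \eqref{modfunctor-comp} for module functors and by functoriality of the balanced tensor product. The source of $m_{f,g}$ should thus be read as a functor on the triple product, and the natural isomorphism $m_{f,g}$ I want is precisely the restriction to homogeneous components of the associativity constraint $m_{X,Y,M}:(X\ot Y)\otb M\to X\otb(Y\otb M)$ of the $\D$-module $\C_A$, as axiomatized in \eqref{left-modulecat1}.

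The key steps, in order, are as follows. First I would unwind what $\otb_{fg}(M_{f,g}\bt\Id)$ and $\otb_f(\Id\bt\otb_g)$ evaluate to on an object $X\bt Y\bt M$ with $X\in\C_f$, $Y\in\C_g$, $M\in\C_A$: the former is $(X\ot Y)\otb M=(X\ot Y)\ot_A M$ and the latter is $X\otb(Y\otb M)=X\ot_A(Y\ot_A M)$, where I am using the identification $\otb_g=\ot_A$ coming from $\Res^\Do_\C\C_A=\C_A$. Then I would define $(m_{f,g})_{X,Y,M}:=m_{X,Y,M}$, the structure isomorphism of the $\D$-module $\C_A$ restricted to the homogeneous objects, and verify that this is indeed natural in each variable and that it descends to a well-defined natural transformation on the balanced tensor product $\C_f\btc\C_g\btc\C_A$ — here one uses that $m$ is already a natural $\C$-module transformation in the sense of \eqref{modfunctor3}, so it is compatible with the coequalizers defining $\ot_A$ and with the $\C$-bimodule structure of each $\C_g$.

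Finally, the pentagon-type identity \eqref{mdgt1} is nothing but the associativity axiom \eqref{left-modulecat1} of the $\D$-module $\C_A$, evaluated on the homogeneous objects $X\in\C_f$, $Y\in\C_g$, $Z\in\C_h$, $M\in\C_A$. Indeed, substituting the definition $(m_{f,g})_{X,Y,M}=m_{X,Y,M}$ into \eqref{mdgt1} reproduces exactly \eqref{left-modulecat1} with the triple $(X,Y,Z)$ and object $M$, since $M_{f,g}(X\bt Y)=X\ot Y$ so that $(m_{fg,h})_{X\ot Y,Z,M}$ is literally $m_{X\ot Y,Z,M}$ and similarly for the other terms. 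Thus the coherence condition is automatic, and no genuinely new computation is required beyond bookkeeping.

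The main obstacle I anticipate is not the verification of \eqref{mdgt1}, which is formal, but rather the well-definedness of $m_{f,g}$ as a \emph{module} natural transformation on the balanced tensor product: one must check that the structure isomorphism $m_{X,Y,M}$ of $\C_A$ is compatible with the $\C$-balancing used to form $\C_f\btc\C_g\btc\C_A$ and with the module-functor constraints $c$ of $\otb_{fg}$, $\otb_f$ and $\otb_g$ coming from \eqref{left-modulecat1} and \eqref{left-modulecat2}. This amounts to tracking how the associativity constraint $m$ interacts with the mixed associativity and unit isomorphisms of the $\D$-module $\C_A$; the relevant compatibilities all follow from the module-category axioms \eqref{left-modulecat1}–\eqref{left-modulecat2} and the fact, recorded in Proposition \ref{Res}(1), that $M_{f,g}$ is an equivalence of $\C$-bimodules, but assembling them cleanly is the only point demanding care.
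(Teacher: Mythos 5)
Your proposal is correct and takes essentially the same route as the paper: the paper's proof likewise defines $m_{f,g}$ as the restriction of the associativity constraint of the $\D$-module $\C_A$ to the homogeneous components $\C_f\times\C_g\times\C_A$, and deduces \eqref{mdgt1} directly from the module-category axiom \eqref{left-modulecat1}. (One slip worth flagging: your identification $\otb_g=\ot_A$ is unjustified --- for $X\in\C_g$ the expression $X\ot_A M$ is not even defined, and the hypothesis $\Res^\Do_\C\C_A=\C_A$ only identifies the action of $\C_1$ with $X\ot M$ --- but nothing in your argument actually relies on it, since you only use the abstract constraint $m_{X,Y,M}$.)
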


\begin{proof}  The restriction of the associativity of $\C_A$ produces the natural module isomorphisms $m_{f,g}$%: \otb_{fg} (M_{f,g}\boxtimes \Id_{\C_A})\to \otb_f (\Id_{\C_f}\boxtimes \otb_g)$
. Equation \eqref{left-modulecat1} implies \eqref{mdgt1}.
\end{proof}

The following definition is inspired by \cite[Section 8]{MM}.
\begin{defi}\label{def:typea} Assume $A\in \C$ is an algebra such that $\C_A$ is an indecomposable exact $\C$-module. A \textit{type-A datum} for the module category $\C_A$ is a collection $(H, \{A_h\}_{h\in H}, \beta)$ where for any $f, g, h\in H$
\begin{itemize}
\item $H\subseteq G$ is a subgroup;
\item $A_1=A$;

\item $A_h$ is an invertible $A$-bimodule in $\C_h$, that is $A_h\in {}_A(\C_h)_A$;

\item  $\beta_{f,h}:A_{fh}\to A_f\ot_A A_h$ is an $A$-bimodule isomorphism  such that 
\begin{equation}\label{beta-assoc}
(\id_{A_f}\ot_A\beta_{g,h})\beta_{f,gh}=\walph_{A_f,A_g,A_h}(\beta_{f,g}\ot_A \id_{A_h}) \beta_{fg,h},\end{equation}
\begin{equation}\label{beta-unit}
\blambda_{A_h}\beta_{1,h}=\id_{A_h}, \quad \overline{\rho}_{A_h}\beta_{h,1}=\id_{A_h}.
\end{equation}
\end{itemize}
\end{defi}

Let $B\in \C$ be another algebra, such that $\C_B$ is an indecomposable exact $\C$-module. A type-A datum  $(F, \{B_f\}_{f\in F}, \gamma)$ for the module category $\C_B$, is \textit{equivalent} to the type-A datum $(H, \{A_h\}_{h\in H}, \beta)$ for $\C_A$ if 
\begin{itemize}
\item $F=H$;
\item there exists an invertible $(B,A)$-bimodule $C\in {}_B\C_{A}$ together with $(B,A)$-bimodule isomorphisms $$\tau_h:B_h\ot_B C \to C \ot_A A_h, \text{ for any } h\in H;$$
 such that for any $ l,h\in H$
\begin{equation}\label{typeA-equiv}\begin{split}
(\id\ot \beta_{h,l})\tau_{hl}&=\walph_{C,A_h,A_l}(\tau_h\ot\id) \walph^{-1}_{B_h,C,A_l} (\id\ot \tau_l)\walph_{B_h,B_l,C}(\gamma_{h,l}\ot\id),\\
\overline{\rho}_C\tau_1&= \overline{\lambda}_C.\end{split}
\end{equation} 
\end{itemize}

\begin{prop}\label{typeA-algebra-struc} Let $(H, \{A_h\}_{h\in H}, \beta)$ be a type-A datum. The object $\widehat{A}=\oplus_{h\in H} A_h\in \C_H$ has an algebra structure.
\end{prop}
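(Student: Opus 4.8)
The plan is to define a multiplication on $\widehat{A}=\oplus_{h\in H} A_h$ component-wise using the bimodule isomorphisms $\beta_{f,h}$, and then verify associativity and the unit axiom using the cocycle-type condition \eqref{beta-assoc} and the unit condition \eqref{beta-unit}. Concretely, for each pair $f,h\in H$ I would define the restriction of the multiplication $m_{\widehat A}$ to the summand $A_f\ot A_h$ to be the composite
\begin{equation*}
A_f\ot A_h \xrightarrow{\ \pi_{A_f,A_h}\ } A_f\ot_A A_h \xrightarrow{\ \beta_{f,h}^{-1}\ } A_{fh}\hookrightarrow \widehat A,
\end{equation*}
extended by zero on the off-diagonal summands (this is forced, since $A_f\ot A_h\in\C_{fh}$ and the target must land in the $fh$-component). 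The point of passing through $\ot_A$ is that $\beta_{f,h}$ is a map out of $A_f\ot_A A_h$, and $\pi_{A_f,A_h}$ is the canonical projection; the fact that each $\beta_{f,h}$ is an $A$-bimodule isomorphism guarantees that $m_{\widehat A}$ is a morphism in $\C_{fh}\subseteq\C_H$ compatible with the relevant structure.

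Next I would verify associativity. Writing out $m_{\widehat A}(m_{\widehat A}\ot\id)$ and $m_{\widehat A}(\id\ot m_{\widehat A})$ on the summand $A_f\ot A_g\ot A_h$ and comparing the two composites, both reduce to statements about the maps $\beta$ after using the compatibility of the projections $\pi$ with the associativity constraints $\alpha$, $\balph$ and $\walph$ encoded in diagrams \eqref{assoc-A0} and \eqref{assoc-A}. The heart of the matter is precisely condition \eqref{beta-assoc}: after pushing everything down to the quotients $\ot_A$, associativity of $m_{\widehat A}$ becomes the identity
$(\id_{A_f}\ot_A\beta_{g,h})\,\beta_{f,gh}=\walph_{A_f,A_g,A_h}\,(\beta_{f,g}\ot_A \id_{A_h})\,\beta_{fg,h}$,
which is exactly what is assumed. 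The associativity constraint of $\D$ (hence the $\alpha$ appearing when one works at the level of $\ot$ rather than $\ot_A$) is absorbed correctly because $\walph$ is by definition the associativity of ${}_A\D_A$, matching diagram \eqref{assoc-A}.

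For the unit, I would take $u_{\widehat A}=u_A:\uno\to A=A_1\hookrightarrow\widehat A$, where $u_A$ is the unit of the algebra $A$. The left and right unit axioms for $\widehat A$ then reduce, on the summand $A_h$, to the two equations in \eqref{beta-unit}, namely $\blambda_{A_h}\beta_{1,h}=\id_{A_h}$ and $\overline\rho_{A_h}\beta_{h,1}=\id_{A_h}$, after using Lemma \ref{tech-alg} (equation \eqref{unit-algA}) to identify the composite through $\pi$ and the unit with the canonical left/right $A$-module structure maps $\blambda$, $\overline\rho$. This is the role for which \eqref{beta-unit} was included in the definition of type-A datum.

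The main obstacle I expect is purely bookkeeping: one must be careful that the three a~priori different associativity isomorphisms ($\alpha$ for $\C$, $\balph$ for $(X\ot Y)\ot_A Z$, and $\walph$ for the fully relativized product) are inserted in the right places when translating between $\ot$ and $\ot_A$, and that the coequalizer maps $\pi$ interact correctly with these constraints via \eqref{assoc-A0} and \eqref{assoc-A}. Once the diagrams are drawn so that every face either commutes by naturality of $\alpha$, by one of those two defining diagrams, or by an $A$-bimodule property of the $\beta$'s, the verification of \eqref{beta-assoc} and \eqref{beta-unit} as the associativity and unit conditions is essentially immediate; there is no conceptual difficulty beyond organizing this translation cleanly.
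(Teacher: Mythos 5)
Your proposal is correct and follows essentially the same route as the paper: the multiplication is defined on each summand $A_f\ot A_h$ as $\beta_{f,h}^{-1}\circ\pi_{A_f,A_h}$, associativity is reduced via the defining diagrams \eqref{assoc-A0} and \eqref{assoc-A} to the cocycle condition \eqref{beta-assoc}, and the unit is $u_A$ composed with the inclusion $A\hookrightarrow\widehat A$. In fact you go slightly further than the paper's own proof, which leaves the unit axiom (your reduction to \eqref{beta-unit} via Lemma \ref{tech-alg}) unverified.
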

\pf The unit $u:\uno\to \widehat{A}$ is the unit of the algebra $A$ composed with the canonical inclusion $A\hookrightarrow \widehat{A}$. The multiplication $\widehat{m}: \widehat{A}\ot \widehat{A}\to \widehat{A}$ is defined as follows. For any $f,h\in H$, 
$\widehat{m}\mid_{A_h\ot A_f}: A_h\ot A_f\to A_{hf}$ is the composition
$$A_h\ot A_f\xrightarrow{\pi_{A_h,A_f}} A_h\ot_A A_f \xrightarrow{\beta^{-1}_{h,f}}  A_{hf}. $$
Let us prove the associativity of this product. Let $h,f,l\in H$, then
\begin{align*} \widehat{m} (\widehat{m}\ot \id_{\widehat{A}}) \mid_{A_h\ot A_f\ot A_l} &=\beta^{-1}_{hf,l}\pi_{A_{hf},A_l}( \beta^{-1}_{h,f} \pi_{A_h,A_f}\ot \id_{A_l})\\
&=\beta^{-1}_{hf,l}( \beta^{-1}_{h,f} \ot_A \id_{A_l}) \pi_{A_{h}\ot_A A_f,A_l}(\pi_{A_h,A_f}\ot \id_{A_l}).
\end{align*}
On the other hand,
\begin{align*} \widehat{m} ( \id_{\widehat{A}}\ot \widehat{m}) \alpha_{\widehat{A},\widehat{A},\widehat{A}} \mid_{A_h\ot A_f\ot A_l}&= \beta^{-1}_{h,fl} \pi_{A_{h},A_{fl}}(\id_{A_h}\ot \beta^{-1}_{f,l} \pi_{A_{f},A_{l}}) \alpha_{A_h,A_f,A_l}\\
&= \beta^{-1}_{h,fl} (\id_{A_h}\ot \beta^{-1}_{f,l}) \walph_{A_h,A_f,A_l},
\end{align*}
in the  second equality we are using the definition of $ \walph$ given in \eqref{assoc-A}. It follows from \eqref{beta-assoc} that $\widehat{m} (\widehat{m}\ot \id_{\widehat{A}})= \widehat{m} ( \id_{\widehat{A}}\ot \widehat{m}) \alpha_{\widehat{A},\widehat{A},\widehat{A}}.$
\epf

Our task will be to classify indecomposable exact $\Do$-modules in terms of certain equivalence classes of type-A data. We shall not assume $\Do$ is strict, except in the proof of some particular theorems, since we want to apply the classification on non-strict tensor categories.

\medbreak

For any $g, h\in G$,  and any subset $H\subseteq G$ we denote 
$$h^g=g^{-1}hg, \quad H^g=g^{-1}Hg.$$

\begin{lema}\label{typeA-datum-to-modcat0} Let $g\in G$. Assume $(H^g, \{A_{h}\}_{h\in H^g}, \beta)$ is a type-A datum for the module category $\C_A$. Then, the category $(\ca_g)_A$ has a structure of $\ca_H$-module.
\end{lema}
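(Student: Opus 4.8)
The plan is to equip $(\C_g)_A$ with an explicit $\C_H$-action built from the bimodules $A_h$, using them to correct the grading shift produced by the tensor product of $\D$. Conceptually, tensoring by $\C_g$ conjugates the degree subgroup $H^g$ to $H$, since $g H^g g^{-1}=H$, so the $\C_{H^g}$-module data carried by $(H^g,\{A_h\},\beta)$ on $\C_A$ is transported to a $\C_H$-module structure on $(\C_g)_A$ (the case $g=1$ being exactly the promised $\C_H$-module structure on $\C_A$).

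Concretely, for $k\in H$, $X\in\C_k$ and $M\in(\C_g)_A$ I set
$$X\,\otb\,M:=(X\ot M)\ot_A A_{(k^g)^{-1}}.$$
Here $X\ot M\in\C_{kg}=\C_{g k^g}$ is a right $A$-module and, since $k\in H$ gives $(k^g)^{-1}\in H^g$, the bimodule $A_{(k^g)^{-1}}\in{}_A(\C_{(k^g)^{-1}})_A$ belongs to the datum; the relative tensor product lands in $\C_{g k^g (k^g)^{-1}}=\C_g$ and keeps its right $A$-module structure, so $X\otb M\in(\C_g)_A$. This bifunctor is bi-exact: $\ot$ is bi-exact in $\D$, and since $A_{(k^g)^{-1}}$ is an invertible $A$-bimodule, $-\ot_A A_{(k^g)^{-1}}$ is an equivalence, hence exact. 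For $k=1$ we have $(1^g)^{-1}=1$ and $A_1=A$, so $X\otb M\simeq X\ot M$ and the action restricts to the standard $\C$-module structure on $(\C_g)_A$.

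Next I would construct the module associativity isomorphisms. For $X\in\C_k$, $Y\in\C_l$ with $k,l\in H$ and $M\in(\C_g)_A$, using $((kl)^g)^{-1}=(l^g)^{-1}(k^g)^{-1}$, the isomorphism $m_{X,Y,M}\colon(X\ot Y)\otb M\to X\otb(Y\otb M)$ is the composite obtained by (i) reassociating with $\alpha_{X,Y,M}$, (ii) splitting $A_{(l^g)^{-1}(k^g)^{-1}}$ via $\beta_{(l^g)^{-1},(k^g)^{-1}}$, and (iii) rearranging the iterated relative tensor products with the constraints $\balph$ and $\walph$ governed by \eqref{assoc-A0} and \eqref{assoc-A}. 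Naturality of $m$ follows from that of $\alpha$, $\beta$, $\balph$ and $\walph$. The unit isomorphism $\ell_M\colon\uno\otb M\to M$ is $\overline{\rho}_M\,(l_M\ot_A\id_A)\colon(\uno\ot M)\ot_A A\to M$, which is natural in $M$.

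The main obstacle is verifying the pentagon axiom \eqref{left-modulecat1}. Expanding both sides for $X\in\C_k$, $Y\in\C_l$, $Z\in\C_j$ with $k,l,j\in H$, the contributions split into three layers: the terms involving only $\alpha$ cancel by the pentagon of $\D$; the terms rearranging iterated relative tensor products cancel by the coherence of $\balph,\walph$ together with their compatibility \eqref{assoc-A0}, \eqref{assoc-A} with $\pi$; and the remaining terms reduce precisely to the cocycle identity \eqref{beta-assoc} for $\beta$ with indices $(j^g)^{-1},(l^g)^{-1},(k^g)^{-1}$, which is the very condition making $\widehat{A}$ associative in Proposition \ref{typeA-algebra-struc}. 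The triangle axiom \eqref{left-modulecat2} follows in the same way from the unit normalizations \eqref{beta-unit}. Carrying out this diagram chase, routine but lengthy, completes the construction of the $\C_H$-module structure on $(\C_g)_A$.
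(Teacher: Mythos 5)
Your proposal is correct and follows essentially the same route as the paper: you define the identical action $X\otb M=(X\ot M)\ot_A A_{(k^{-1})^g}$, the same associativity constraint built from $\alpha$, $\beta$, $\balph$, $\walph$, and the same unit map $\overline{\rho}_M(l_M\ot_A\id_A)$, and you correctly identify that the pentagon reduces to the pentagon of $\D$, the coherence of $\balph,\walph$, and the cocycle identity \eqref{beta-assoc}, while the triangle reduces to \eqref{beta-unit} --- which is exactly the content of the paper's multi-page diagram chase (your index placement $\beta_{(l^{-1})^g,(k^{-1})^g}$, with the inner factor first, is in fact the consistent one used throughout the paper's computation).
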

\pf   For $M\in (\ca_g)_A$ define the action as
\begin{equation}\label{mod-action-cA}
X\ott M= (X\ot M)\ot_A A_{(f^{-1})^g},
\end{equation}
for any $f\in H$, $X\in \C_f$. The right action of $A$ on $X\ot M$ is given on the second tensorand. The associativity isomorphisms are given by  
\begin{equation}\label{associ-typeA-b} m_{X,Y,M}:((X\ot Y)\ot M)\ot_A A_{((fh)^{-1})^g}\to (X\ot ((Y\ot M)\ot_A A_{(f^{-1})^g})) \ot_A A_{(h^{-1})^g},
\end{equation}
\begin{align*} m_{X,Y,M}= &(\balph_{X,Y\ot M, A_{(f^{-1})^g}}\ot\id)\walph^{-1}_{X\ot (Y\ot M), A_{(f^{-1})^g}, A_{(h^{-1})^g}}(\id\ot \beta_{(f^{-1})^g,(h^{-1})^g})\\&(\alpha_{X,Y,M }\ot \id ),
\end{align*}
for any $h,f\in H$ and $X\in\C_f,Y\in\C_h$, $M\in (\ca_g)_A$.  Now, we shall prove that equation \eqref{left-modulecat1} is fulfilled. The proof is a bit long but quite straightforward. As a saving-space measure,  we shall denote $\widetilde h=(h^{-1})^g$ for any $h\in H$. Also, the tensor product will be denoted by yuxtaposition $X\ot Y=XY$.    Let be $h,f,l\in H$ and $X\in\C_f,Y\in\C_h$, $Z\in\C_l$, $M\in (\ca_g)_A.$ In this case, the left hand side of \eqref{left-modulecat1} is equal to
\begin{align*} ((&\id_X\ot   m_{Y,Z,M}\eq)\ot_A\id_{A_{(f^{-1})^g}})  m_{X,Y\ot Z,M}\eq((\alpha_{X,Y,Z}\ot\id_{M})\ot_A \id_{A_{((fhl)^{-1})^g}})\\
&=((\id_X\ot \balph_{Y,ZM,A_{\widetilde h}}\ot_A\id_{A_{\widetilde l}})\ot_A\id_{A_{\widetilde f}})((\id_X\ot \walph^{-1}_{Y(ZM),A_{\widetilde h},A_{\widetilde l}})\ot_A\id_{A_{\widetilde f}})\\
& ((\id_X\ot \id_{Y(ZM)}\ot_A\beta_{\widetilde h,\widetilde l})\ot_A\id_{A_{\widetilde f}})((\id_X\ot \alpha_{Y,Z,M}\ot_A\id_{A_{\widetilde h\widetilde l}})\ot_A\id_{A_{\widetilde f}})\eq\\
& (\overline\alpha_{X,(YZ)M,A_{\widetilde h\widetilde l}}\ot_A\id_{A_{\widetilde f}})
\walph^{-1}_{X((YZ)M),A_{\widetilde h\widetilde l},A_{\widetilde f}}(\id_{X((YZ)M)}\ot_A\beta_{\widetilde h\widetilde l,\widetilde f})\\
& (\alpha_{X,YZ,M}\ot_A\id_{A_{\widetilde{fhl}}})
((\alpha_{X,Y,Z}\ot\id_{M})\ot_A \id_{A_{\widetilde{fhl}}})\\
&=((\id_X\ot \balph_{Y,ZM,A_{\widetilde h}}\ot_A\id_{A_{\widetilde l}})\ot_A\id_{A_{\widetilde f}})((\id_X\ot \walph^{-1}_{Y(ZM),A_{\widetilde h},A_{\widetilde l}})\ot_A\id_{A_{\widetilde f}})\\
&((\id_X\ot \alpha_{Y,Z,M}\ot_A\id_{A_{\widetilde h}\ot_A A_{\widetilde l}})\ot_A\id_{A_{\widetilde f}})   ((\id_X\ot \id_{(YZ)M}\ot_A\beta_{\widetilde h,\widetilde l})\ot_A\id_{A_{\widetilde f}})\\
& \circ (\overline\alpha_{X,(YZ)M,A_{\widetilde h\widetilde l}}\ot_A\id_{A_{\widetilde f}})\eq
\walph^{-1}_{X((YZ)M),A_{\widetilde h\widetilde l},A_{\widetilde f}}(\id_{X((YZ)M)}\ot_A\beta_{\widetilde h\widetilde l,\widetilde f})\\
&(\alpha_{X,YZ,M}\ot_A\id_{A_{\widetilde h\widetilde l\widetilde f}})\circ  
((\alpha_{X,Y,Z}\ot\id_{M})\ot_A \id_{A_{((flh)^{-1})^g}}).
\end{align*}
Using that $\balph$ and $\walph$ are natural we get that this last expression equals to
\begin{align*}
&=((\id_X\ot \balph_{Y,ZM,A_{\widetilde h}}\ot_A\id_{A_{\widetilde l}})\ot_A\id_{A_{\widetilde f}})((\id_X\ot \walph^{-1}_{Y(ZM),A_{\widetilde h},A_{\widetilde l}})\ot_A\id_{A_{\widetilde f}})\\
&((\id_X\ot \alpha_{Y,Z,M}\ot_A\id_{A_{\widetilde h}\ot_A A_{\widetilde l}})\ot_A\id_{A_{\widetilde f}})\circ (\overline\alpha_{X,(YZ)M,A_{\widetilde h}\ot_A A_{\widetilde l}}\ot_A\id_{A_{\widetilde f}})\\
& ((\id_X\ot \id_{(YZ)M}\ot_A\beta_{\widetilde h,\widetilde l})\ot_A\id_{A_{\widetilde f}})
\walph^{-1}_{X((YZ)M),A_{\widetilde h\widetilde l},A_{\widetilde f}}\eq(\id_{X((YZ)M)}\ot_A\beta_{\widetilde h\widetilde l,\widetilde f})\\
&(\alpha_{X,YZ,M}\ot_A\id_{A_{\widetilde h\widetilde l\widetilde f}})\circ  
((\alpha_{X,Y,Z}\ot\id_{M})\ot_A \id_{A_{\widetilde f\widetilde l\widetilde h}})\\
&=((\id_X\ot \balph_{Y,ZM,A_{\widetilde h}}\ot_A\id_{A_{\widetilde l}})\ot_A\id_{A_{\widetilde f}})((\id_X\ot \walph^{-1}_{Y(ZM),A_{\widetilde h},A_{\widetilde l}})\ot_A\id_{A_{\widetilde f}})\\
&((\id_X\ot \alpha_{Y,Z,M}\ot_A\id_{A_{\widetilde h}\ot_A A_{\widetilde l}})\ot_A\id_{A_{\widetilde f}})\circ (\overline\alpha_{X,(YZ)M,A_{\widetilde h}\ot_A A_{\widetilde l}}\ot_A\id_{A_{\widetilde f}})\\
&\walph^{-1}_{X((YZ)M),A_{\widetilde h}\ot_A A_{\widetilde l},A_{\widetilde f}}((\id_X\ot \id_{(YZ)M}\ot_A\beta_{\widetilde h,\widetilde l})\ot_A\id_{A_{\widetilde f}})(\id_{X((YZ)M)}\ot_A\beta_{\widetilde h\widetilde l,\widetilde f})\\
& (\alpha_{X,YZ,M}\ot_A\id_{A_{\widetilde h\widetilde l\widetilde f}})\circ 
((\alpha_{X,Y,Z}\ot\id_{M})\ot_A \id_{A_{\widetilde f\widetilde l\widetilde h}})\eq\\
&=((\id_X\ot \balph_{Y,ZM,A_{\widetilde h}}\ot_A\id_{A_{\widetilde l}})\ot_A\id_{A_{\widetilde f}})((\id_X\ot \walph^{-1}_{Y(ZM),A_{\widetilde h},A_{\widetilde l}})\ot_A\id_{A_{\widetilde f}})\\
&((\id_X\ot \alpha_{Y,Z,M}\ot_A\id_{A_{\widetilde h}\ot_A A_{\widetilde l}})\ot_A\id_{A_{\widetilde f}})\circ (\overline\alpha_{X,(YZ)M,A_{\widetilde h}\ot_A A_{\widetilde l}}\ot_A\id_{A_{\widetilde f}})\\
&\walph^{-1}_{X((YZ)M),A_{\widetilde h}\ot_A A_{\widetilde l},A_{\widetilde f}}((\id_X\ot \id_{(YZ)M}\ot_A\beta_{\widetilde h,\widetilde l})\ot_A\id_{A_{\widetilde f}}) (\id_{X((YZ)M)}\ot_A\beta_{\widetilde h\widetilde l,\widetilde f})\\
& (\id_X\ot\alpha^{-1}_{Y,Z,M}\ot_A\id_{A_{\widetilde h\widetilde l\widetilde f}})  \eq
(\alpha_{X,Y,ZM}\ot_A\id_{A_{\widetilde h\widetilde l\widetilde f}})(\alpha_{XY,Z,M}\ot_A\id_{A_{\widetilde h\widetilde l\widetilde f}}).
\end{align*}
The last equation follows from  the pentagon axiom. The last expression above equals to 
\begin{align*}
&=((\id_X\ot \balph_{Y,ZM,A_{\widetilde h}}\ot_A\id_{A_{\widetilde l}})\ot_A\id_{A_{\widetilde f}})((\id_X\ot \walph^{-1}_{Y(ZM),A_{\widetilde h},A_{\widetilde l}})\ot_A\id_{A_{\widetilde f}})\\
&((\id_X\ot \alpha_{Y,Z,M}\ot_A\id_{A_{\widetilde h}\ot_A A_{\widetilde l}})\ot_A\id_{A_{\widetilde f}})\circ (\overline\alpha_{X,(YZ)M,A_{\widetilde h}\ot_A A_{\widetilde l}}\ot_A\id_{A_{\widetilde f}})\\
&\walph^{-1}_{X((YZ)M),A_{\widetilde h}\ot_A A_{\widetilde l},A_{\widetilde f}} ((\id_X\ot \id_{(YZ)M}\ot_A\beta_{\widetilde h,\widetilde l})\ot_A\id_{A_{\widetilde f}})\\
& (\id_X\ot\alpha^{-1}_{Y,Z,M}\ot_A\id_{A_{\widetilde h\widetilde l}\ot_A A_{\widetilde f}})\eq(\id_{X(Y(ZM))}\ot_A\beta_{\widetilde h\widetilde l,\widetilde f}) 
(\alpha_{X,Y,ZM}\ot_A \id_{A_{\widetilde h\widetilde l\widetilde f}})\\
&(\alpha_{XY,Z,M}\ot_A\id_{A_{\widetilde h\widetilde l\widetilde f}})\\
&=((\id_X\ot \balph_{Y,ZM,A_{\widetilde h}}\ot_A\id_{A_{\widetilde l}})\ot_A\id_{A_{\widetilde f}})((\id_X\ot \walph^{-1}_{Y(ZM),A_{\widetilde h},A_{\widetilde l}})\ot_A\id_{A_{\widetilde f}})\\
& ((\id_X\ot \alpha_{Y,Z,M}\ot_A\id_{A_{\widetilde h}\ot_A A_{\widetilde l}})\ot_A\id_{A_{\widetilde f}}) (\overline\alpha_{X,(YZ)M,A_{\widetilde h}\ot_A A_{\widetilde l}}\ot_A\id_{A_{\widetilde f}})\eq\\
&\walph^{-1}_{X((YZ)M),A_{\widetilde h}\ot_A A_{\widetilde l},A_{\widetilde f}}
(\id_X\ot\alpha^{-1}_{Y,Z,M}\ot_A\id_{A_{\widetilde h\widetilde l}\ot_A A_{\widetilde f}})\\&((\id_X\ot \id_{Y(ZM)}\ot_A\beta_{\widetilde h,\widetilde l})\ot_A\id_{A_{\widetilde f}})
(\id_{X(Y(ZM))}\ot_A\beta_{\widetilde h\widetilde l,\widetilde f}) 
(\alpha_{X,Y,ZM}\ot_A\id_{A_{\widetilde h\widetilde l\widetilde f}})\\
&(\alpha_{XY,Z,M}\ot_A\id_{A_{\widetilde h\widetilde l\widetilde f}})\end{align*}\begin{align*}
&=((\id_X\ot \balph_{Y,ZM,A_{\widetilde h}}\ot_A\id_{A_{\widetilde l}})\ot_A\id_{A_{\widetilde f}})((\id_X\ot \walph^{-1}_{Y(ZM),A_{\widetilde h},A_{\widetilde l}})\ot_A\id_{A_{\widetilde f}})\\
& (\overline\alpha_{X,Y(ZM),A_{\widetilde h}\ot_A A_{\widetilde l}}\ot_A\id_{A_{\widetilde f}}) ((\id_X\ot \alpha_{Y,Z,M}\ot_A\id_{A_{\widetilde h}\ot_A A_{\widetilde l}})\ot_A\id_{A_{\widetilde f}})\\
& \walph^{-1}_{X((YZ)M),A_{\widetilde h}\ot_A A_{\widetilde l},A_{\widetilde f}}
(\id_X\ot\alpha^{-1}_{Y,Z,M}\ot_A\id_{A_{\widetilde h\widetilde l}\ot_A A_{\widetilde f}})\eq\\&((\id_X\ot \id_{Y(ZM)}\ot_A\beta_{\widetilde h,\widetilde l})\ot_A\id_{A_{\widetilde f}})
(\id_{X(Y(ZM))}\ot_A\beta_{\widetilde h\widetilde l,\widetilde f}) 
(\alpha_{X,Y,ZM}\ot_A\id_{A_{\widetilde h\widetilde l\widetilde f}})\\
&(\alpha_{XY,Z,M}\ot_A\id_{A_{\widetilde h\widetilde l\widetilde f}})\\
&=((\id_X\ot \balph_{Y,ZM,A_{\widetilde h}}\ot_A\id_{A_{\widetilde l}})\ot_A\id_{A_{\widetilde f}})((\id_X\ot \walph^{-1}_{Y(ZM),A_{\widetilde h},A_{\widetilde l}})\ot_A\id_{A_{\widetilde f}})\\
& (\overline\alpha_{X,Y(ZM),A_{\widetilde h}\ot_A A_{\widetilde l}}\ot_A\id_{A_{\widetilde f}})
\walph^{-1}_{X(Y(ZM)),A_{\widetilde h}\ot_A A_{\widetilde l},A_{\widetilde f}}\\& ( \id_{X(Y(ZM))}\ot_A(\beta_{\widetilde h,\widetilde l}\ot_A\id_{A_{\widetilde f}}))
(\id_{X(Y(ZM))}\ot_A\beta_{\widetilde h\widetilde l,\widetilde f})   
(\alpha_{X,Y,ZM}\ot_A\id_{A_{\widetilde h\widetilde l\widetilde f}})\\
&(\alpha_{XY,Z,M}\ot_A\id_{A_{\widetilde h\widetilde l\widetilde f}}).
\end{align*}
Here we have used the naturality of $\alpha$ and $\walph$. Using \eqref{beta-assoc} we get that the last expression above equals
\begin{align*}
&=((\id_X\ot \balph_{Y,ZM,A_{\widetilde h}}\ot_A\id_{A_{\widetilde l}})\ot_A\id_{A_{\widetilde f}})((\id_X\ot \walph^{-1}_{Y(ZM),A_{\widetilde h},A_{\widetilde l}})\ot_A\id_{A_{\widetilde f}})\\
& (\overline\alpha_{X,Y(ZM),A_{\widetilde h}\ot_A A_{\widetilde l}}\ot_A\id_{A_{\widetilde f}})
 \walph^{-1}_{X(Y(ZM)),A_{\widetilde h}\ot_A A_{\widetilde l},A_{\widetilde f}}
(\id_{X(Y(ZM))}\ot_A\walph^{-1}_{A_{\widetilde h},A_{\widetilde l},A_{\widetilde f}})\\
&(\id_{X(Y(ZM))}\ot_A(\id_{A_{\widetilde h}}\ot_A\beta_{\widetilde l,\widetilde f}))(\id_{X(Y(ZM))}\ot_A\beta_{\widetilde h,\widetilde l\widetilde f})(\alpha_{X,Y,ZM}\ot_A\id_{A_{\widetilde h\widetilde l\widetilde f}})\\
&(\alpha_{XY,Z,M}\ot_A\id_{A_{\widetilde h\widetilde l\widetilde f}})\\
&=((\id_X\ot \balph_{Y,ZM,A_{\widetilde h}}\ot_A\id_{A_{\widetilde l}})\ot_A\id_{A_{\widetilde f}})((\id_X\ot \walph^{-1}_{Y(ZM),A_{\widetilde h},A_{\widetilde l}})\ot_A\id_{A_{\widetilde f}})\\
& (\overline\alpha_{X,Y(ZM),A_{\widetilde h}\ot_A A_{\widetilde l}}\ot_A\id_{A_{\widetilde f}})
(\walph_{X(Y(ZM)),A_{\widetilde h},A_{\widetilde l}}\ot_A\id_{A_{\widetilde f}})\walph^{-1}_{X(Y(ZM))\ot_A A_{\widetilde h},A_{\widetilde l},A_{\widetilde f}}\\
&\walph^{-1}_{X(Y(ZM)),A_{\widetilde h},A_{\widetilde l}\ot_A A_{\widetilde f}})(\id_{X(Y(ZM))}\ot_A(\id_{A_{\widetilde h}}\ot_A\beta_{\widetilde l,\widetilde f})) (\id_{X(Y(ZM))}\ot_A\beta_{\widetilde h,\widetilde l\widetilde f})\\
& (\alpha_{X,Y,ZM}\ot_A\id_{A_{\widetilde h\widetilde l\widetilde f}})\eq(\alpha_{XY,Z,M}\ot_A\id_{A_{\widetilde h\widetilde l\widetilde f}}). 
\end{align*}
The last equality follows from the pentagon axiom for $\walph$. This last expression equals to
\begin{align*}
&=((\id_X\ot \balph_{Y,ZM,A_{\widetilde h}}\ot_A\id_{A_{\widetilde l}})\ot_A\id_{A_{\widetilde f}})((\id_X\ot \walph^{-1}_{Y(ZM),A_{\widetilde h},A_{\widetilde l}})\ot_A\id_{A_{\widetilde f}})\\
& (\overline\alpha_{X,Y(ZM),A_{\widetilde h}\ot_A A_{\widetilde l}}\ot_A\id_{A_{\widetilde f}})
(\walph_{X(Y(ZM)),A_{\widetilde h},A_{\widetilde l}}\ot_A\id_{A_{\widetilde f}})\walph^{-1}_{X(Y(ZM))\ot_A A_{\widetilde h},A_{\widetilde l},A_{\widetilde f}}\\
&\walph^{-1}_{X(Y(ZM)),A_{\widetilde h},A_{\widetilde l}\ot_A A_{\widetilde f}}) (\id_{X(Y(ZM))}\ot_A(\id_{A_{\widetilde h}}\ot_A\beta_{\widetilde l,\widetilde f}))(\alpha_{X,Y,ZM}\ot_A\id_{A_{\widetilde h}\ot_A A_{\widetilde l\widetilde f}})\\&(\id_{(XY)(ZM))}\ot_A\beta_{\widetilde h,\widetilde l\widetilde f})(\alpha_{XY,Z,M}\ot_A\id_{A_{\widetilde h\widetilde l\widetilde f}})\\
&=((\id_X\ot \balph_{Y,ZM,A_{\widetilde h}}\ot_A\id_{A_{\widetilde l}})\ot_A\id_{A_{\widetilde f}})((\id_X\ot \walph^{-1}_{Y(ZM),A_{\widetilde h},A_{\widetilde l}})\ot_A\id_{A_{\widetilde f}})\\
& (\overline\alpha_{X,Y(ZM),A_{\widetilde h}\ot_A A_{\widetilde l}}\ot_A\id_{A_{\widetilde f}})
(\walph_{X(Y(ZM)),A_{\widetilde h},A_{\widetilde l}}\ot_A\id_{A_{\widetilde f}})\walph^{-1}_{X(Y(ZM))\ot_A A_{\widetilde h},A_{\widetilde l},A_{\widetilde f}}\\
&\walph^{-1}_{X(Y(ZM)),A_{\widetilde h},A_{\widetilde l}\ot_A A_{\widetilde f}})(\alpha_{X,Y,ZM}\ot_A\id_{A_{\widetilde h}\ot_A (A_{\widetilde l}\ot_A A_{\widetilde f}})\\
&(\id_{(XY)(ZM)}\ot_A(\id_{A_{\widetilde h}}\ot_A\beta_{\widetilde l,\widetilde f})) (\id_{(XY)(ZM))}\ot_A\beta_{\widetilde h,\widetilde l\widetilde f})(\alpha_{XY,Z,M}\ot_A\id_{A_{\widetilde h\widetilde l\widetilde f}})\end{align*}\begin{align*}
&=((\id_X\ot \balph_{Y,ZM,A_{\widetilde h}}\ot_A\id_{A_{\widetilde l}})\ot_A\id_{A_{\widetilde f}})((\id_X\ot \walph^{-1}_{Y(ZM),A_{\widetilde h},A_{\widetilde l}})\ot_A\id_{A_{\widetilde f}})\\
& (\overline\alpha_{X,Y(ZM),A_{\widetilde h}\ot_A A_{\widetilde l}}\ot_A\id_{A_{\widetilde f}})
(\walph_{X(Y(ZM)),A_{\widetilde h},A_{\widetilde l}}\ot_A\id_{A_{\widetilde f}})\walph^{-1}_{X(Y(ZM))\ot_A A_{\widetilde h},A_{\widetilde l},A_{\widetilde f}}\\
&\walph^{-1}_{X(Y(ZM)),A_{\widetilde h},A_{\widetilde l}\ot_A A_{\widetilde f}})(\alpha_{X,Y,ZM}\ot_A\id_{A_{\widetilde h}\ot_A (A_{\widetilde l}\ot_A A_{\widetilde f}})\\
& (\id_{(XY)(ZM)}\ot_A(\id\ot_A\beta_{\widetilde l,\widetilde f}))\walph_{(XY)(ZM),A_{\widetilde h},A_{\widetilde l\widetilde f}}\eq(\balph^{-1}_{XY,ZM,A_{\widetilde h}}\ot_A\id_{A_{\widetilde l\widetilde f}})m_{XY,Z,M}.
\end{align*}
The last equality follows from \eqref{associ-typeA-b}. This last expression equals to 
\begin{align*}
&=((\id_X\ot \balph_{Y,ZM,A_{\widetilde h}}\ot_A\id_{A_{\widetilde l}})\ot_A\id_{A_{\widetilde f}})((\id_X\ot \walph^{-1}_{Y(ZM),A_{\widetilde h},A_{\widetilde l}})\ot_A\id_{A_{\widetilde f}})\\
& (\overline\alpha_{X,Y(ZM),A_{\widetilde h}\ot_A A_{\widetilde l}}\ot_A\id_{A_{\widetilde f}})
(\walph_{X(Y(ZM)),A_{\widetilde h},A_{\widetilde l}}\ot_A\id_{A_{\widetilde f}})\walph^{-1}_{X(Y(ZM))\ot_A A_{\widetilde h},A_{\widetilde l},A_{\widetilde f}}\\
&\walph^{-1}_{X(Y(ZM)),A_{\widetilde h},A_{\widetilde l}\ot_A A_{\widetilde f}})(\alpha_{X,Y,ZM}\ot_A\id_{A_{\widetilde h}\ot_A (A_{\widetilde l}\ot_A A_{\widetilde f}})\walph_{(XY)(ZM),A_{\widetilde h},A_{\widetilde l}\ot_A A_{\widetilde f}}\\
& (\id_{(XY)(ZM)}\ot_A(\id_{A_{\widetilde h}}\ot_A\beta_{\widetilde l,\widetilde f}))(\balph^{-1}_{XY,ZM,A_{\widetilde h}}\ot_A\id_{A_{\widetilde l\widetilde f}})   m_{XY,Z,M}\\
&=((\id_X\ot \balph_{Y,ZM,A_{\widetilde h}}\ot_A\id_{A_{\widetilde l}})\ot_A\id_{A_{\widetilde f}})((\id_X\ot \walph^{-1}_{Y(ZM),A_{\widetilde h},A_{\widetilde l}})\ot_A\id_{A_{\widetilde f}})\\
& (\overline\alpha_{X,Y(ZM),A_{\widetilde h}\ot_A A_{\widetilde l}}\ot_A\id_{A_{\widetilde f}})
(\walph_{X(Y(ZM)),A_{\widetilde h},A_{\widetilde l}}\ot_A\id_{A_{\widetilde f}})\walph^{-1}_{X(Y(ZM))\ot_A A_{\widetilde h},A_{\widetilde l},A_{\widetilde f}}\\
& \walph^{-1}_{X(Y(ZM)),A_{\widetilde h},A_{\widetilde l}\ot_A A_{\widetilde f}}) (\alpha_{X,Y,ZM}\ot_A\id_{A_{\widetilde h}\ot_A (A_{\widetilde l}\ot_A A_{\widetilde f}})\walph_{(XY)(ZM),A_{\widetilde h},A_{\widetilde l}\ot_A A_{\widetilde f}}\\
&(\balph^{-1}_{XY,ZM,A_{\widetilde h}}\ot_A\id_{A_{\widetilde l}\ot_A A_{\widetilde f}})(\id_{XY}\ot(\id_{ZM\ot_A A_{\widetilde h}}\ot_A\beta_{\widetilde l,\widetilde f}))m_{XY,Z,M}\\
&=((\id_X\ot \balph_{Y,ZM,A_{\widetilde h}}\ot_A\id_{A_{\widetilde l }})\ot_A\id_{A_{\widetilde f}})((\id_X\ot \walph^{-1}_{Y(ZM),A_{\widetilde h},A_{\widetilde l}})\ot_A\id_{A_{\widetilde f}})\\
& (\overline\alpha_{X,Y(ZM),A_{\widetilde h}\ot_A A_{\widetilde l}}\ot_A\id_{A_{\widetilde f}})
(\walph_{X(Y(ZM)),A_{\widetilde h},A_{\widetilde l}}\ot_A\id_{A_{\widetilde f}})\walph^{-1}_{X(Y(ZM))\ot_A A_{\widetilde h},A_{\widetilde l},A_{\widetilde f}}\\
& ((\alpha_{X,Y,ZM}\ot_A\id_{A_{\widetilde h}})\ot_A\id_{ A_{\widetilde l}\ot_A A_{\widetilde f}})
(\balph^{-1}_{XY,ZM,A_{\widetilde h}}\ot_A\id_{A_{\widetilde l}\ot_A A_{\widetilde f}})\\
&(\id_{XY}\ot(\id_{ZM\ot_A A_{\widetilde h}}\ot_A\beta_{\widetilde l,\widetilde f}))m_{XY,Z,M}.
\end{align*}
Here, we have used the naturality of $\walph$. Using a pentagon axiom for $\balph$, the last expression is equal to
\begin{align*}
&=((\id_X\ot \balph_{Y,ZM,A_{\widetilde h}}\ot_A\id_{A_{\widetilde l}})\ot_A\id_{A_{\widetilde f}})((\id_X\ot \walph^{-1}_{Y(ZM),A_{\widetilde h},A_{\widetilde l}})\ot_A\id_{A_{\widetilde f}})\\
& (\overline\alpha_{X,Y(ZM),A_{\widetilde h}\ot_A A_{\widetilde l}}\ot_A\id_{A_{\widetilde f}})
(\walph_{X(Y(ZM)),A_{\widetilde h},A_{\widetilde l}}\ot_A\id_{A_{\widetilde f}})\walph^{-1}_{X(Y(ZM))\ot_A A_{\widetilde h},A_{\widetilde l},A_{\widetilde f}}\\
&(\balph^{-1}_{X,Y(ZM),A_{\widetilde h}}\ot_A\id_{A_{\widetilde l}\ot_A A_{\widetilde f}})((\id_X\ot\balph^{-1}_{Y,ZM,A_{\widetilde h}}\ot_A\id_{ A_{\widetilde l}\ot_A A_{\widetilde f}})\\
& (\alpha_{X,Y,(ZM)\ot_A A_{\widetilde h}}\ot_A\id_{A_{\widetilde l}\ot_A A_{\widetilde f}})(\id_{XY}\ot(\id_{ZM\ot_A A_{\widetilde h}}\ot_A\beta_{\widetilde l,\widetilde f}))   m_{XY,Z,M}\\
&=((\id_X\ot \balph_{Y,ZM,A_{\widetilde h}}\ot_A\id_{A_{\widetilde l}})\ot_A\id_{A_{\widetilde f}})((\id_X\ot \walph^{-1}_{Y(ZM),A_{\widetilde h},A_{\widetilde l}})\ot_A\id_{A_{\widetilde f}})\\
& (\overline\alpha_{X,Y(ZM),A_{\widetilde h}\ot_A A_{\widetilde l}}\ot_A\id_{A_{\widetilde f}})
(\walph_{X(Y(ZM)),A_{\widetilde h},A_{\widetilde l}}\ot_A\id_{A_{\widetilde f}})\walph^{-1}_{X(Y(ZM))\ot_A A_{\widetilde h},A_{\widetilde l},A_{\widetilde f}}\\
&(\balph^{-1}_{X,Y(ZM),A_{\widetilde h}}\ot_A\id_{A_{\widetilde l}\ot_A A_{\widetilde f}})((\id_X\ot\balph^{-1}_{Y,ZM,A_{\widetilde h}}\ot_A\id_{ A_{\widetilde l}\ot_A A_{\widetilde f}})\\
& (\id_{X(Y((ZM)\ot_A A_{\widetilde h}))}\ot_A\beta_{\widetilde l,\widetilde f}))(\alpha_{X,Y,(ZM)\ot_A A_{\widetilde h}}\ot_A\id_{A_{\widetilde l\widetilde f}})   m_{XY,Z,M}.
\end{align*}
Here, again, we have used the naturality of $\alpha$. Using the definition of the associativity  \eqref{associ-typeA-b}, the above expresssion equals to
\begin{align*}
&=((\id_X\ot \balph_{Y,ZM,A_{\widetilde h}}\ot_A\id_{A_{\widetilde l}})\ot_A\id_{A_{\widetilde f}})((\id_X\ot \walph^{-1}_{Y(ZM),A_{\widetilde h},A_{\widetilde l}})\ot_A\id_{A_{\widetilde f}})\\
& (\overline\alpha_{X,Y(ZM),A_{\widetilde h}\ot_A A_{\widetilde l}}\ot_A\id_{A_{\widetilde f}})
(\walph_{X(Y(ZM)),A_{\widetilde h},A_{\widetilde i}}\ot_A\id_{A_{\widetilde f}})\walph^{-1}_{X(Y(ZM))\ot_A A_{\widetilde h},A_{\widetilde l},A_{\widetilde f}}\\
&(\balph^{-1}_{X,Y(ZM),A_{\widetilde h}}\ot_A\id_{A_{\widetilde l}\ot_A A_{\widetilde f}}) (\id_X\ot\balph^{-1}_{Y,ZM,A_{\widetilde h}}\ot_A\id_{ A_{\widetilde l}\ot_A A_{\widetilde f}})\\
& \walph_{(X(Y(ZM))\ot_A A_{\widetilde h}),A_{\widetilde l},A_{\widetilde f}} \eq(\balph^{-1}_{X,Y((ZM)\ot_A A_{\widetilde h}),A_{\widetilde l}}\ot_A\id_{A_{\widetilde f}})m_{X,Y,(ZM)\ot_A A_{\widetilde h}} m_{XY,Z,M}\\
&=((\id_X\ot \balph_{Y,ZM,A_{\widetilde h}}\ot_A\id_{A_{\widetilde l}})\ot_A\id_{A_{\widetilde f}})((\id_X\ot \walph^{-1}_{Y(ZM),A_{\widetilde h},A_{\widetilde l}})\ot_A\id_{A_{\widetilde f}})\\
& (\overline\alpha_{X,Y(ZM),A_{\widetilde h}\ot_A A_{\widetilde l}}\ot_A\id_{A_{\widetilde f}})
(\walph_{X(Y(ZM)),A_{\widetilde h},A_{\widetilde l}}\ot_A\id_{A_{\widetilde f}}) \walph^{-1}_{(X(Y(ZM)))\ot_A A_{\widetilde h},A_{\widetilde l},A_{\widetilde f}}\\
& (\balph^{-1}_{X,Y(ZM),A_{\widetilde h}}\ot_A\id_{A_{\widetilde l}\ot_A A_{\widetilde f}})
\walph_{(X(Y(ZM))\ot_A A_{\widetilde h}),A_{\widetilde l},A_{\widetilde f}}\\
&((\id_X\ot\balph^{-1}_{Y,ZM,A_{\widetilde h}}\ot_A\id_{ A_{\widetilde l}})\ot_A \id_{A_{\widetilde f}})\eq(\balph^{-1}_{X,Y((ZM)\ot_A A_{\widetilde h}),A_{\widetilde l}}\ot_A\id_{A_{\widetilde f}})\\
&m_{X,Y,(ZM)\ot_A A_{\widetilde h}} m_{XY,Z,M}\\
&=((\id_X\ot \balph_{Y,ZM,A_{\widetilde h}}\ot_A\id_{A_{\widetilde l}})\ot_A\id_{A_{\widetilde f}})((\id_X\ot \walph^{-1}_{Y(ZM),A_{\widetilde h},A_{\widetilde l}})\ot_A\id_{A_{\widetilde f}})\\
& (\overline\alpha_{X,Y(ZM),A_{\widetilde h}\ot_A A_{\widetilde l}}\ot_A\id_{A_{\widetilde f}})
(\walph_{X(Y(ZM)),A_{\widetilde h},A_{\widetilde l}}\ot_A\id_{A_{\widetilde f}})
\\&((\balph^{-1}_{X,Y(ZM),A_{\widetilde h}}\ot_A\id_{A_{\widetilde l}})\ot_A \id_{A_{\widetilde f}}) ((\id_X\ot\balph^{-1}_{Y,ZM,A_{\widetilde h}}\ot_A\id_{ A_{\widetilde l}})\ot_A \id_{A_{\widetilde f}})\\
& (\balph^{-1}_{X,Y((ZM)\ot_A A_{\widetilde h}),A_{\widetilde l}}\ot_A\id_{A_{\widetilde f}})   m_{X,Y,(ZM)\ot_A A_{\widetilde h}} m_{XY,Z,M}.
\end{align*}
Here we have used again the naturality of $\walph$ and $\balph$. The above expression equals to
\begin{align*}
&=((\id_X\ot \balph_{Y,ZM,A_{\widetilde h}}\ot_A\id_{A_{\widetilde l}})\ot_A\id_{A_{\widetilde f}}) ((\id_X\ot \walph^{-1}_{Y(ZM),A_{\widetilde h},A_{\widetilde l}})\ot_A\id_{A_{\widetilde f}})\\
&  (\overline\alpha_{X,Y(ZM),A_{\widetilde h}\ot_A A_{\widetilde l}}\ot_A\id_{A_{\widetilde f}})
(\walph_{X(Y(ZM)),A_{\widetilde h},A_{\widetilde l}}\ot_A\id_{A_{\widetilde f}})\\
& ((\balph^{-1}_{X,Y(ZM),A_{\widetilde h}}\ot_A\id_{A_{\widetilde l}})\ot_A \id_{A_{\widetilde f}})(\balph^{-1}_{X,(Y(ZM))\ot_A A_{\widetilde h}),A_{\widetilde l}}\ot_A\id_{A_{\widetilde f}})\\
&((\id_X\ot\balph^{-1}_{Y,ZM,A_{\widetilde h}}\ot_A\id_{ A_{\widetilde l}})\ot_A \id_{A_{\widetilde f}})m_{X,Y,(ZM)\ot_A A_h} m_{XY,Z,M}\\
&= ((\id_X\ot \balph_{Y,ZM,A_{\widetilde h}}\ot_A\id_{A_{\widetilde l}})\ot_A\id_{A_{\widetilde f}})
((\id_X\ot\balph^{-1}_{Y,ZM,A_{\widetilde h}}\ot_A\id_{ A_{\widetilde li}})\ot_A \id_{A_{\widetilde f}})\\
&m_{X,Y,(ZM)\ot_A A_{\widetilde h}} m_{XY,Z,M}\\
&=m_{X,Y,(ZM)\ot_A A_{\widetilde h}} m_{XY,Z,M},
\end{align*}
the second equality follows from the pentagon axiom for $\balph$. This finishes the proof of \eqref{left-modulecat1}.
For any $M\in (\C_g)_A$ define the unit isomorphisms of this module category by $\widetilde{l}_M:\uno\widetilde\ot M:=(\uno\ot M)\ot_A A\to M$ as $\widetilde{l}_M=\overline{\rho}_M(l_M\ot_A\id_A)$. Here $l_M$ is the left unit isomorphism of the tensor category $\D$. The left hand side of \eqref{beta-unit} in this case is equal to
\begin{align*}
    (&\id_X\ot\widetilde{l}_M)m_{X,\uno,M}= (\id_X\ot\overline{\rho}_M\ot_A\id_{A_{(h^{-1})^g}})((\id_X(l_M\ot_A\id_A))\ot_A\id_{A_{(h^{-1})^g}}) \\
    &(\balph_{X,\uno\ot M, A_{1}}\ot\id_{A_{(h^{-1})^g}})\walph^{-1}_{X\ot (\uno\ot M), A_{1}, A_{(h^{-1})^g}}(\id\ot \beta_{1,(h^{-1})^g})\\
    &(\alpha_{X,\uno,M }\ot_A \id_{A_{(h^{-1})^g}} )\\
    &=(\id_X\ot l_M\ot_A\id_{A_{(h^{-1})^g}}) (\id_X\ot\overline{\rho}_{\uno\ot M}\ot_A\id_{A_{(h^{-1})^g}})  (\balph_{X,\uno\ot M, A_{1}}\ot\id_{A_{(h^{-1})^g}})\\
    &\walph^{-1}_{X\ot (\uno\ot M), A_{1}, A_{(h^{-1})^g}}(\id\ot \beta_{1,(h^{-1})^g})(\alpha_{X,\uno,M }\ot_A \id_{A_{(h^{-1})^g}} )\\
&=(\id_X\ot l_M\ot_A\id_{A_{(h^{-1})^g}}) (\overline{\rho}_{X(\uno\ot M)}\ot_A\id_{A_{(h^{-1})^g}}) \walph^{-1}_{X\ot (\uno\ot M), A_{1}, A_{(h^{-1})^g}}\\
&(\id\ot \beta_{1,(h^{-1})^g})(\alpha_{X,\uno,M }\ot_A \id_{A_{(h^{-1})^g}} )\\
&=(\id\ot l_M\ot_A\id_{A_{(h^{-1})^g}}) (\id\ot_A\overline{\rho}_{A_{(h^{-1})^g}})(\id\ot \beta_{1,(h^{-1})^g})\eq(\alpha_{X,\uno,M }\ot_A \id_{A_{(h^{-1})^g}} )\\
&=(\id_X\ot l_M\ot_A\id_{A_{(h^{-1})^g}})(\alpha_{X,\uno,M }\ot_A \id_{A_{(h^{-1})^g}} )\\
&=r_X\ot_A\id_{A_{(h^{-1})^g}}, 
\end{align*}
where the first equality is by definition, the second uses the naturality of $\overline{\rho}$, the third equality follows from the property of $\overline{\rho}$, the fourth by triangle axiom of $\walph$, the fifth follows by \eqref{beta-unit} and the sixth by triangle axiom of $\alpha$. This implies  \eqref{left-modulecat2} is satisfied and the category $(\ca_g)_A$ has a structure of $\ca_H$-module.
\epf
As a particular case of the above result, if $(H, \{A_h\}_{h\in H}, \beta)$ is a type-A datum for the module category $\C_A$, then $\ca_A$ has a  structure of $\ca_H$-module. Recall from Proposition \ref{typeA-algebra-struc} that out of a type-A datum we can construct an algebra $\widehat{A}=\oplus_{h\in H} A_h$ in $\C_H$. 

\begin{prop}\label{alg-mod} There exists an equivalence $\C_A\simeq (\C_H)_{\widehat{A}}$ of $\C_H$-modules.
\end{prop}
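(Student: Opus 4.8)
The plan is to realize the equivalence as an internal-Hom equivalence and then to identify the resulting representing algebra with $\widehat A$. Throughout, $\ca_H=\oplus_{h\in H}\C_h$ is regarded as an $H$-graded extension of $\C$, and $\C_A$ carries the $\ca_H$-module structure of Lemma \ref{typeA-datum-to-modcat0} taken with $g=1$, so that $X\ott M=(X\ot M)\ot_A A_{f^{-1}}$ for $X\in\C_f$.

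First I would check that this $\ca_H$-module $\C_A$ is indecomposable and exact. For $X\in\C=\C_1$ the action reduces to $X\ott M=(X\ot M)\ot_A A_1\cong X\ot M$ via $\overline{\rho}_{X\ot M}$, and the unit conditions \eqref{beta-unit} guarantee that the module constraints restrict to the standard ones; hence $\Res^{\ca_H}_\C\C_A\cong\C_A$ as $\C$-modules, which is indecomposable exact by hypothesis. Applying Lemma \ref{ind-rest}(1) and (3) to the extension $\ca_H$ of $\C$ then gives that $\C_A$ is indecomposable and exact as an $\ca_H$-module.

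Next, take the regular module $A\in\C_A$ and set $B=\underline{\Hom}_{\ca_H}(A,A)$. By Proposition \ref{homint} the functor $\underline{\Hom}_{\ca_H}(A,-)\colon\C_A\to(\ca_H)_B$ is an equivalence of $\ca_H$-modules, so it suffices to prove $B\cong\widehat A$ as algebras. I would first identify the underlying object. Writing $B=\oplus_{f\in H}B_f$ with $B_f\in\C_f$, for every $X\in\C_f$ there are natural isomorphisms
\begin{equation*}
\Hom_{\C_f}(X,B_f)\cong\Hom_{\C_A}(X\ott A,A)\cong\Hom_{\C_f}(X,A_f),
\end{equation*}
the first being the defining property \eqref{Hom-interno}. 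For the second, $X\ott A=(X\ot A)\ot_A A_{f^{-1}}\cong X\ot A_{f^{-1}}$ since $X\ot A$ is the free right $A$-module on $X$ (cf. Lemma \ref{tech-alg}); then applying the equivalence $-\ot_A A_f\colon\C_A\to(\C_f)_A$, which has quasi-inverse $-\ot_A A_{f^{-1}}$ because $A_f$ is invertible with $\beta_{f^{-1},f}\colon A\xrightarrow{\sim}A_{f^{-1}}\ot_A A_f$, one obtains $(X\ot A_{f^{-1}})\ot_A A_f\cong X\ot A$ and $A\ot_A A_f\cong A_f$, whence $\Hom_{\C_A}(X\ot A_{f^{-1}},A)\cong\Hom_{(\C_f)_A}(X\ot A,A_f)\cong\Hom_{\C_f}(X,A_f)$, the last step being the free--forgetful adjunction. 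By Yoneda $B_f\cong A_f$, so $B\cong\widehat A$ as objects of $\ca_H$; moreover the equivalence is, objectwise, $M\mapsto\oplus_f M\ot_A A_f=M\ot_A\widehat A$.

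Finally I would match the algebra structures, which is the main obstacle. The multiplication on $B=\underline{\Hom}_{\ca_H}(A,A)$ is induced by composition, i.e. it is the morphism adjoint to $\mathrm{ev}\,(\id\ott\mathrm{ev})\,m$, where $\mathrm{ev}\colon B\ott A\to A$ is the evaluation and $m$ is the module associativity. Unwinding the identification $B_f\cong A_f$ of the previous step, the evaluation restricted to $A_f\ott A$ is the canonical contraction $A_f\ott A\cong A_f\ot A_{f^{-1}}\xrightarrow{\beta^{-1}_{f,f^{-1}}\pi_{A_f,A_{f^{-1}}}}A$. A direct computation then shows that the induced product $B_h\ot B_f\to B_{hf}$ becomes, under these identifications, exactly $\beta^{-1}_{h,f}\pi_{A_h,A_f}$, that is, the multiplication $\widehat m$ of Proposition \ref{typeA-algebra-struc}; this is where the cocycle condition \eqref{beta-assoc} enters, mirroring its role in the proof of that proposition. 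Hence $B\cong\widehat A$ as algebras, so $(\ca_H)_B\simeq(\ca_H)_{\widehat A}$, and composing with the equivalence of Proposition \ref{homint} yields the desired equivalence $\C_A\simeq(\ca_H)_{\widehat A}$ of $\ca_H$-modules. The delicate point is precisely this last identification of the two products: it requires unraveling the definition of the internal-Hom composition along the chain of isomorphisms of the object-level step, and verifying naturality of all the constraints $\balph,\walph,\blambda,\overline{\rho}$ involved.
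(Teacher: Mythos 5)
Your proof is correct and takes essentially the same approach as the paper: both identify the internal End $\underline{\Hom}_{\C_H}(A,A)$ with $\widehat{A}$ using invertibility of the bimodules $A_h$ together with the free--forgetful adjunction, and then invoke Proposition \ref{homint}. The only differences are cosmetic: you explicitly verify the exactness and indecomposability of $\C_A$ as a $\C_H$-module needed to apply Proposition \ref{homint} (which the paper leaves implicit), you phrase the key isomorphism via the equivalence $-\ot_A A_f$ with quasi-inverse $-\ot_A A_{f^{-1}}$ rather than via duality $A_h\simeq (A_{h^{-1}})^*$, and, like the paper, you leave the final comparison of the two algebra structures as an asserted computation.
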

\pf We shall prove that there is an algebra isomorphism $\underline\Hom(A,A)\simeq \widehat{A}$, hence the proof follows using  Proposition \ref{homint}. Let $h\in H$, $X\in\C_h$, then
\begin{align*} \Hom_{\C_A}(X\ott A, A)&= \Hom_{\C_A}((X\ot A)\ot_A A_{h^{-1}}, A)\\ &\simeq \Hom_{\C_A}(X\ot A, A\ot_A A_h) \simeq \Hom_{\C}(X, A_h)\\
& \simeq \Hom_{\C_H}(X, \widehat{A}).
\end{align*}
The first equality is the definition of the action $\ott$ given in \eqref{mod-action-cA}, the first isomorphism follows from $A_h$ being isomorphic to the dual of $A_{h^{-1}}$. It can be verified that the algebra structure given to the internal Hom, presented in \cite[Thm. 3.17]{EO}, coincides with the algebra structure of $\widehat{A}$ described in Proposition \ref{typeA-algebra-struc}.
\epf

\begin{lema}\label{typeA-datum-to-modcat} Assume $(H, \{A_h\}_{h\in H}, \beta)$ and $(H, \{B_f\}_{f\in H}, \gamma)$
are type-A data for the modules $\C_A$ and $\ca_B$, respectively. Then the data $(H, \{A_h\}_{h\in H}, \beta)$ and $(H, \{B_f\}_{f\in H}, \gamma)$ are equivalent, if and only if, there exists a $\ca_H$-module equivalence $\ca_B\simeq \ca_A$.
\end{lema}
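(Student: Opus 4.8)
The plan is to reduce the statement to a correspondence between invertible bimodules and equivalence data, using the two structural results already available. Throughout, write $\widehat A=\oplus_{h\in H}A_h$ and $\widehat B=\oplus_{f\in H}B_f$ for the algebras in $\C_H$ furnished by Proposition \ref{typeA-algebra-struc}.

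First I would pass from the module categories to these algebras. By Proposition \ref{alg-mod} there are equivalences $\C_A\simeq(\C_H)_{\widehat A}$ and $\C_B\simeq(\C_H)_{\widehat B}$ of $\C_H$-modules, so a $\C_H$-module equivalence $\C_B\simeq\C_A$ is the same datum as a $\C_H$-module equivalence $(\C_H)_{\widehat B}\simeq(\C_H)_{\widehat A}$. Both of these module categories are indecomposable exact over $\C_H$: exactness follows from Lemma \ref{ind-rest}(1) because their restriction to $\C$ is $\C_A$ (resp.\ $\C_B$), which is exact by hypothesis, and indecomposability follows from Lemma \ref{ind-rest}(3) since that restriction is indecomposable. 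Hence Proposition \ref{natu-bimod}, applied with $\C_H$ in place of $\C$ and with $\widehat A,\widehat B$ as algebras, identifies $\C_H$-module functors $(\C_H)_{\widehat B}\to(\C_H)_{\widehat A}$ with the bimodules ${}_{\widehat B}(\C_H)_{\widehat A}$, under which equivalences correspond to the invertible $(\widehat B,\widehat A)$-bimodules $\widehat C$ in $\C_H$. It therefore suffices to match invertible $(\widehat B,\widehat A)$-bimodules in $\C_H$ with equivalences of type-A data.

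Next I would set up this dictionary. Given an invertible $(\widehat B,\widehat A)$-bimodule $\widehat C$, decompose it along the grading as $\widehat C=\oplus_{h\in H}C_h$ with $C_h\in\C_h$, and put $C:=C_1$; restricting the left and right actions to the components $B=B_1$ and $A=A_1$ makes $C$ a $(B,A)$-bimodule in $\C$. The homogeneous component $C_1\ot A_h\to C_h$ of the right action descends to a morphism $C\ot_A A_h\to C_h$, and the isomorphism $A_h\ot_A A_{h^{-1}}\cong A$ coming from the algebra structure of $\widehat A$ (i.e.\ from $\beta_{h,h^{-1}}$) forces this morphism to be an isomorphism; symmetrically $B_h\ot_B C\to C_h$ is an isomorphism. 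I then define $\tau_h:B_h\ot_B C\to C\ot_A A_h$ as the left-action isomorphism followed by the inverse of the right-action isomorphism. The associativity axiom of $\widehat C$, restricted to the summand $B_h\ot B_l\ot C$ and rewritten through $\beta$, $\gamma$ and the definition of $\widehat m$ as in Proposition \ref{typeA-algebra-struc}, becomes exactly the coherence equation \eqref{typeA-equiv}, while the unit axiom yields $\overline\rho_C\tau_1=\overline\lambda_C$; invertibility of $C$ follows by taking degree-$1$ components of an inverse of $\widehat C$. Conversely, from a datum $(C,\{\tau_h\})$ I reconstruct $\widehat C=\oplus_{h\in H}(C\ot_A A_h)$, defining the right $\widehat A$-action from the $\beta_{h,l}$ exactly as the multiplication of $\widehat A$ was defined and the left $\widehat B$-action from the $\tau_h$ and the $\gamma_{h,l}$; equation \eqref{typeA-equiv} is precisely what makes the bimodule axioms hold, and the fact that $C$, the $\tau_h$ and the $\beta_{h,l},\gamma_{h,l}$ are all isomorphisms gives invertibility of $\widehat C$.

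The main obstacle is this last dictionary, specifically the verification that the associativity constraint of the $(\widehat B,\widehat A)$-bimodule $\widehat C$ translates into \eqref{typeA-equiv}. This is the bimodule analogue of the associativity computation carried out for $\widehat m$ in Proposition \ref{typeA-algebra-struc}, but now one must track three distinct relative associators for the mixed products $\ot_A$ and $\ot_B$, namely those appearing as $\walph_{C,A_h,A_l}$, $\walph_{B_h,C,A_l}$ and $\walph_{B_h,B_l,C}$, so the bookkeeping is the delicate point. The manipulations are, however, entirely parallel to those displayed in full in Lemma \ref{typeA-datum-to-modcat0}, and beyond naturality of the associators together with the cocycle relations \eqref{beta-assoc} and \eqref{typeA-equiv} no genuinely new idea is required.
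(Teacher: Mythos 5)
Your proposal is correct, but it takes a genuinely different route from the paper's own proof. The paper never introduces the graded algebras $\widehat{A},\widehat{B}$ in this argument: in one direction it builds the functor $F_C=-\ot_B C:\C_B\to\C_A$ directly from the equivalence data and equips it with the explicit module constraint $c_{X,M}=(\balph_{X,M,C}\ot \id)\walph^{-1}_{X\ot M,C,A_{f^{-1}}}(\id\ot \tau_{f^{-1}})\walph_{X\ot M, B_{f^{-1}},C}$, verifying \eqref{modfunctor1} by a multi-page coherence computation; in the converse direction it invokes \cite[Proposition 3.11]{EO} to write any $\C_H$-module equivalence as $-\ot_B C$ with $C\in{}_B\C_A$ invertible (Morita theory over $\C$, not over $\C_H$), and then recovers $\tau_h$ by an explicit formula involving $\overline{c}_{B_h,B_{h^{-1}}}$ and $\gamma_{h,h^{-1}}$, deducing \eqref{typeA-equiv} from \eqref{modfunctor1} and \eqref{modfunctor2}. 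You instead push the whole problem up to $\C_H$: Proposition \ref{alg-mod} converts the two module categories into $(\C_H)_{\widehat{A}}$ and $(\C_H)_{\widehat{B}}$, Proposition \ref{natu-bimod} --- correctly legitimized over $\C_H$ via Lemma \ref{ind-rest} --- converts module equivalences into invertible objects of ${}_{\widehat{B}}(\C_H)_{\widehat{A}}$, and what remains is the classical strongly-graded (Dade-type) dictionary $B_h\ot_B C\cong C_h\cong C\ot_A A_h$. Your route buys conceptual economy: the paper's two long computations collapse into citations of structural results plus one graded-bimodule translation. The paper's route buys explicit formulas for $c$ and $\tau$, which it reuses later (Proposition \ref{g-eq-tA} is proved \emph{mutatis mutandis} from this lemma), and it does not depend on Proposition \ref{alg-mod}. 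Two points in your sketch deserve sharpening, though neither is a gap in the approach: first, translating the structure of $\widehat{C}$ into \eqref{typeA-equiv} requires all three compatibility axioms of the $(\widehat{B},\widehat{A})$-bimodule (left, right and mixed associativity), not only the axiom restricted to the summand $B_h\ot B_l\ot C$ --- which is in fact consistent with the three associators $\walph_{C,A_h,A_l}$, $\walph_{B_h,C,A_l}$, $\walph_{B_h,B_l,C}$ you list; second, identifying equivalences with invertible bimodules under Proposition \ref{natu-bimod} needs the short Morita argument that a quasi-inverse is again a module functor, hence of the form $-\ot_{\widehat{A}}D'$, so that full faithfulness of the functor in Proposition \ref{natu-bimod} upgrades the natural module isomorphisms of the two composites with the identities to bimodule isomorphisms $\widehat{C}\ot_{\widehat{A}}D'\cong \widehat{B}$ and $D'\ot_{\widehat{B}}\widehat{C}\cong \widehat{A}$.
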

\pf If $C\in {}_B\C_{A}$ is an invertible bimodule, the functor $F_C:\ca_B\to \ca_A$, $F_C(M)=M\ot_B C$ is an equivalence of categories. Moreover, if $C$ comes from the equivalence of type-A data $(H, \{A_h\}_{h\in H}, \beta), (H, \{B_f\}_{f\in H}, \gamma)$, according to Definition \ref{def:typea}, then $F_C$ has a $\ca$-module functor structure given by
\begin{align*} c_{X,M}&:F_C(X\otb M)\to X\otb F_C(M)\\
c_{X,M}&=(\balph_{X,M,C}\ot \id)\walph^{-1}_{X\ot M,C,A_{f^{-1}}}(\id\ot \tau_{f^{-1}})\walph_{X\ot M, B_{f^{-1}},C}, \ \ f\in H, X\in \ca_f.
\end{align*}
Now, we shall prove that equation \eqref{modfunctor1} is fulfilled. Let be $X\in \C_f, Y\in\C_g$, $M\in(\C_g)_A$. The left hand side of \eqref{modfunctor1} in this case is equal to
\begin{align*}
    (&\id_X\ot c_{Y,M})c_{X,YM}F(m_{X,Y,M})=((\id_X\ot\balph_{Y,M,C}\ot_A\id_{A_g})\ot_A\id_{A_f})\\
    &(\id_X\ot\walph^{-1}_{YM,C,A_g}\ot_A\id_{A_f})((\id_X\ot\id_{YM}\ot_B\tau_g)\ot_A\id_{A_f})\\
    &(\id_X\ot \walph_{YM,A_g,C}\ot_A\id_{A_f})\circ (\balph_{X,(YM)\ot_B B_g,C}\ot_A\id_{A_f})\walph^{-1}_{X((YM)\ot_B B_g),C,A_f}\\
    &(\id_{X((YM)\ot_B B_g)}\ot_B\tau_f) \walph_{X((YM)\ot_B B_g),B_f,C}\circ (\balph_{X,YM,B_g}\ot_B\id_{B_f}\ot_B \id_C)\\
    &(\walph^{-1}_{X(YM),B_g,B_f}\ot_B \id_C)(\id_{X(YM)}\ot_B\gamma_{g,f}\ot_B \id_C)(\alpha_{X,Y,M}\ot_B\id_{B_{gf}}\ot_B\id_C)\\
    &=((\id_X\ot\balph_{Y,M,C}\ot_A\id_{A_g})\ot_A\id_{A_f})(\id_X\ot\walph^{-1}_{YM,C,A_g}\ot_A\id_{A_f})\\
    &((\id_X\ot\id_{YM}\ot_B\tau_g)\ot_A\id_{A_f})(\id_X\ot \walph_{YM,A_g,C}\ot_A\id_{A_f})\circ \\
    &(\balph_{X,(YM)\ot_B B_g,C}\ot_A\id_{A_f})\walph^{-1}_{X((YM)\ot_B B_g),C,A_f}(\id_{X((YM)\ot_B B_g)}\ot_B\tau_f)\\
    &(\balph_{X,YM,B_g}\ot_B\id_{B_f\ot_B C}) \walph_{(X(YM))\ot_B B_g,B_f,C}(\walph^{-1}_{X(YM),B_g,B_f}\ot_B \id_C)\eq\\
    &(\id_{X(YM)}\ot_B\gamma_{g,f}\ot_B \id_C)(\alpha_{X,Y,M}\ot_B\id_{B_{gf}}\ot_B\id_C).
\end{align*}
The last equation follows from the naturality of $\walph$. Using a pentagon axiom for $\walph$, the last expression is equal to
\begin{align*}
&=((\id_X\ot\balph_{Y,M,C}\ot_A\id_{A_g})\ot_A\id_{A_f})(\id_X\ot\walph^{-1}_{YM,C,A_g}\ot_A\id_{A_f})\\
    &((\id_X\ot\id_{YM}\ot_B\tau_g)\ot_A\id_{A_f})(\id_X\ot \walph_{YM,A_g,C}\ot_A\id_{A_f})\circ \\
    &(\balph_{X,(YM)\ot_B B_g,C}\ot_A\id_{A_f})\walph^{-1}_{X((YM)\ot_B B_g),C,A_f}(\id_{X((YM)\ot_B B_g)}\ot_B\tau_f)\\ &(\balph_{X,YM,B_g}\ot_B\id_{B_f\ot_B C})\walph^{-1}_{(X(YM)), B_g,B_f\ot_B C}(\id_{X(YM)}\ot_B\walph_{B_g,B_f,C})\\
    & \walph_{X(YM),B_g\ot_B B_F,C}(\id_{X(YM)}\ot_B\gamma_{g,f}\ot_B \id_C)\eq(\alpha_{X,Y,M}\ot_B\id_{B_{gf}}\ot_B\id_C)\\
&=((\id_X\ot\balph_{Y,M,C}\ot_A\id_{A_g})\ot_A\id_{A_f})(\id_X\ot\walph^{-1}_{YM,C,A_g}\ot_A\id_{A_f})\\
&((\id_X\ot\id_{YM}\ot_B\tau_g)\ot_A\id_{A_f})(\id_X\ot \walph_{YM,A_g,C}\ot_A\id_{A_f})\circ \\
&(\balph_{X,(YM)\ot_B B_g,C}\ot_A\id_{A_f})\walph^{-1}_{X((YM)\ot_B B_g),C,A_f}(\id_{X((YM)\ot_B B_g)}\ot_B\tau_f)\\ &(\balph_{X,YM,B_g}\ot_B\id_{B_f\ot_B C})\walph^{-1}_{(X(YM)), B_g,B_f\ot_B C}(\id_{X(YM)}\ot_B\walph_{B_g,B_f,C})\\
&(\id_{X(YM)}\ot_B\gamma_{g,f}\ot_B \id_C) \walph_{X(YM),B_{gf},C}(\alpha_{X,Y,M}\ot_B\id_{B_{gf}}\ot_B\id_C)\end{align*}\begin{align*}
&=((\id_X\ot\balph_{Y,M,C}\ot_A\id_{A_g})\ot_A\id_{A_f})(\id_X\ot\walph^{-1}_{YM,C,A_g}\ot_A\id_{A_f})\\
&((\id_X\ot\id_{YM}\ot_B\tau_g)\ot_A\id_{A_f})(\id_X\ot \walph_{YM,A_g,C}\ot_A\id_{A_f})\circ \\
&(\balph_{X,(YM)\ot_B B_g,C}\ot_A\id_{A_f})\walph^{-1}_{X((YM)\ot_B B_g),C,A_f}(\id_{X((YM)\ot_B B_g)}\ot_B\tau_f)\\ &(\balph_{X,YM,B_g}\ot_B\id_{B_f\ot_B C})\walph^{-1}_{(X(YM)), B_g,B_f\ot_B C}(\id_{X(YM)}\ot_B\walph_{B_g,B_f,C})\\
& (\id_{X(YM)}\ot_B\gamma_{g,f}\ot_B \id_C)(\alpha_{X,Y,M}\ot_B\id_{B_{gf}\ot_B C}) \eq\walph_{X(YM),B_{gf},C}.
\end{align*}
We have used the naturality of $\walph$. The above expression equals to
\begin{align*}
&=((\id_X\ot\balph_{Y,M,C}\ot_A\id_{A_g})\ot_A\id_{A_f})(\id_X\ot\walph^{-1}_{YM,C,A_g}\ot_A\id_{A_f})\\
&((\id_X\ot\id_{YM}\ot_B\tau_g)\ot_A\id_{A_f})(\id_X\ot \walph_{YM,A_g,C}\ot_A\id_{A_f})\circ \\
&(\balph_{X,(YM)\ot_B B_g,C}\ot_A\id_{A_f})\walph^{-1}_{X((YM)\ot_B B_g),C,A_f}(\id_{X((YM)\ot_B B_g)}\ot_B\tau_f)\\ &(\balph_{X,YM,B_g}\ot_B\id_{B_f\ot_B C})\walph^{-1}_{(X(YM)), B_g,B_f\ot_B C} (\id_{X(YM)}\ot_B\walph_{B_g,B_f,C})\\
& (\alpha_{X,Y,M}\ot_B\id_{B_{g}\ot_B B_f\ot_B C}) \eq(\id_{X(YM)}\ot_B\gamma_{g,f}\ot_B \id_C)\eq\walph_{X(YM),B_{gf},C}\\
&=((\id_X\ot\balph_{Y,M,C}\ot_A\id_{A_g})\ot_A\id_{A_f})(\id_X\ot\walph^{-1}_{YM,C,A_g}\ot_A\id_{A_f})\\
&((\id_X\ot\id_{YM}\ot_B\tau_g)\ot_A\id_{A_f})(\id_X\ot \walph_{YM,A_g,C}\ot_A\id_{A_f})\circ \\
&(\balph_{X,(YM)\ot_B B_g,C}\ot_A\id_{A_f})\walph^{-1}_{X((YM)\ot_B B_g),C,A_f} (\id_{X((YM)\ot_B B_g)}\ot_B\tau_f)\\ & (\balph_{X,YM,B_g}\ot_B\id_{B_f\ot_B C})\eq\walph^{-1}_{(X(YM)), B_g,B_f\ot_B C}(\alpha_{X,Y,M}\ot_B\id_{B_{g}\ot_B B_f\ot_B C})\\ 
&(\id_{(XY)M}\ot_B\walph_{B_g,B_f,C}) \eq(\id_{X(YM)}\ot_B\gamma_{g,f}\ot_B \id_C)\eq\walph_{X(YM),B_{gf},C}\\
&=((\id_X\ot\balph_{Y,M,C}\ot_A\id_{A_g})\ot_A\id_{A_f})(\id_X\ot\walph^{-1}_{YM,C,A_g}\ot_A\id_{A_f})\\
&((\id_X\ot\id_{YM}\ot_B\tau_g)\ot_A\id_{A_f})(\id_X\ot \walph_{YM,A_g,C}\ot_A\id_{A_f})\circ \\
&(\balph_{X,(YM)\ot_B B_g,C}\ot_A\id_{A_f}) \walph^{-1}_{X((YM)\ot_B B_g),C,A_f}(\balph_{X,YM,B_g}\ot_B\id_{C\ot_A A_f})\eq\\
&(\id_{(X(YM))\ot_B B_g}\ot_B\tau_f)\walph^{-1}_{(X(YM)), B_g,B_f\ot_B C}(\alpha_{X,Y,M}\ot_B\id_{B_{g}\ot_B B_f\ot_B C})\\ 
&(\id_{(XY)M}\ot_B\walph_{B_g,B_f,C}) \eq(\id_{X(YM)}\ot_B\gamma_{g,f}\ot_B \id_C)\eq\walph_{X(YM),B_{gf},C}\\
&=((\id_X\ot\balph_{Y,M,C}\ot_A\id_{A_g})\ot_A\id_{A_f})(\id_X\ot\walph^{-1}_{YM,C,A_g}\ot_A\id_{A_f})\\
&((\id_X\ot\id_{YM}\ot_B\tau_g)\ot_A\id_{A_f})(\id_X\ot \walph_{YM,A_g,C}\ot_A\id_{A_f})\circ \\
&(\balph_{X,(YM)\ot_B B_g,C}\ot_A\id_{A_f})(\balph_{X,YM,B_g}\ot_B\id_{C\ot_A A_f})\walph^{-1}_{(X(YM))\ot_B B_g),C,A_f}\\
& (\id_{(X(YM))\ot_B B_g}\ot_B\tau_f)\walph^{-1}_{(X(YM)), B_g,B_f\ot_B C}\eq(\alpha_{X,Y,M}\ot_B\id_{B_{g}\ot_B B_f\ot_B C})\\ 
&(\id_{(XY)M}\ot_B\walph_{B_g,B_f,C}) \eq(\id_{X(YM)}\ot_B\gamma_{g,f}\ot_B \id_C)\eq\walph_{X(YM),B_{gf},C}\end{align*}\begin{align*}
&=((\id_X\ot\balph_{Y,M,C}\ot_A\id_{A_g})\ot_A\id_{A_f})(\id_X\ot\walph^{-1}_{YM,C,A_g}\ot_A\id_{A_f})\\
&((\id_X\ot\id_{YM}\ot_B\tau_g)\ot_A\id_{A_f})(\id_X\ot \walph_{YM,A_g,C}\ot_A\id_{A_f})\circ \\
&(\balph_{X,(YM)\ot_B B_g,C}\ot_A\id_{A_f})(\balph_{X,YM,B_g}\ot_B\id_{C\ot_A A_f}) \walph^{-1}_{(X(YM))\ot_B B_g),C,A_f}\\
& \walph^{-1}_{X(YM),B_g,C\ot_A A_f}\eq(\id_{X(YM)}\ot_B \id_{B_g}\ot_B\tau_f)\eq(\alpha_{X,Y,M}\ot_B\id_{B_{g}\ot_B B_f\ot_B C})\\ 
&(\id_{(XY)M}\ot_B\walph_{B_g,B_f,C}) \eq(\id_{X(YM)}\ot_B\gamma_{g,f}\ot_B \id_C)\eq\walph_{X(YM),B_{gf},C}.
\end{align*}
The last equations follow from the naturality of $\walph$. Now, using a pentagon axiom for $\walph$, the last expression equals to
\begin{align*}
&=((\id_X\ot\balph_{Y,M,C}\ot_A\id_{A_g})\ot_A\id_{A_f})(\id_X\ot\walph^{-1}_{YM,C,A_g}\ot_A\id_{A_f})\\
&((\id_X\ot\id_{YM}\ot_B\tau_g)\ot_A\id_{A_f})(\id_X\ot \walph_{YM,A_g,C}\ot_A\id_{A_f})(\balph_{X,(YM)\ot_B B_g,C}\ot_A\id)\\
&(\balph_{X,YM,B_g}\ot_B\id_{C\ot_A A_f})(\walph^{-1}_{X(YM),B_g,C}\ot_A A_f)\walph^{-1}_{X(YM),B_g\ot_B C, A_f}\\
&(\id_{X(YM)}\ot_B\walph^{-1}_{B_g,C,A_f}) (\id_{X(YM)}\ot_B \id_{B_g}\ot_B\tau_f)(\alpha_{X,Y,M}\ot_B\id_{B_{g}\ot_B B_f\ot_B C})\\ 
&(\id_{(XY)M}\ot_B\walph_{B_g,B_f,C}) \eq(\id_{X(YM)}\ot_B\gamma_{g,f}\ot_B \id_C)\eq\walph_{X(YM),B_{gf},C}\\
&=((\id_X\ot\balph_{Y,M,C}\ot_A\id_{A_g})\ot_A\id_{A_f})(\id_X\ot\walph^{-1}_{YM,C,A_g}\ot_A\id_{A_f})\\
&((\id_X\ot\id_{YM}\ot_B\tau_g)\ot_A\id_{A_f})(\id_X\ot \walph_{YM,A_g,C}\ot_A\id_{A_f})(\balph_{X,(YM)\ot_B B_g,C}\ot_A\id)\\
&(\balph_{X,YM,B_g}\ot_B\id_{C\ot_A A_f})(\walph^{-1}_{X(YM),B_g,C}\ot_A A_f)\walph^{-1}_{X(YM),B_g\ot_B C, A_f}\\
& (\id_{X(YM)}\ot_B\walph^{-1}_{B_g,C,A_f})(\alpha_{X,Y,M}\ot_B\id_{B_{g}\ot_B C\ot_A A_f})\eq(\id_{X(YM)}\ot_B \id_{B_g}\ot_B\tau_f)\\ 
&(\id_{(XY)M}\ot_B\walph_{B_g,B_f,C}) \eq(\id_{X(YM)}\ot_B\gamma_{g,f}\ot_B \id_C)\eq\walph_{X(YM),B_{gf},C}\\
&=((\id_X\ot\balph_{Y,M,C}\ot_A\id_{A_g})\ot_A\id_{A_f})(\id_X\ot\walph^{-1}_{YM,C,A_g}\ot_A\id_{A_f})\\
&((\id_X\ot\id_{YM}\ot_B\tau_g)\ot_A\id_{A_f})(\id_X\ot \walph_{YM,A_g,C}\ot_A\id_{A_f})(\balph_{X,(YM)\ot_B B_g,C}\ot_A\id)\\
&(\balph_{X,YM,B_g}\ot_B\id_{C\ot_A A_f})(\walph^{-1}_{X(YM),B_g,C}\ot_A A_f)\walph^{-1}_{X(YM),B_g\ot_B C, A_f}\\
&(\alpha_{X,Y,M}\ot_B\id_{B_{g}\ot_B C\ot_A A_f}) (\id_{X(YM)}\ot_B\walph^{-1}_{B_g,C,A_f})(\id_{X(YM)}\ot_B \id_{B_g}\ot_B\tau_f)\\ 
& (\id_{(XY)M}\ot_B\walph_{B_g,B_f,C}) (\id_{X(YM)}\ot_B\gamma_{g,f}\ot_B \id_C)\eq\walph_{X(YM),B_{gf},C}.
\end{align*}
Using \eqref{typeA-equiv} we get that the last expression equals to
\begin{align*}
&=((\id_X\ot\balph_{Y,M,C}\ot_A\id_{A_g})\ot_A\id_{A_f})(\id_X\ot\walph^{-1}_{YM,C,A_g}\ot_A\id_{A_f})\\
&((\id_X\ot\id_{YM}\ot_B\tau_g)\ot_A\id_{A_f})(\id_X\ot \walph_{YM,A_g,C}\ot_A\id_{A_f})(\balph_{X,(YM)\ot_B B_g,C}\ot_A\id_{A_f})\\
&(\balph_{X,YM,B_g}\ot_B\id_{C\ot_A A_f})(\walph^{-1}_{X(YM),B_g,C}\ot_A A_f)\walph^{-1}_{X(YM),B_g\ot_B C, A_f}\\
& (\alpha_{X,Y,M}\ot_B\id_{B_{g}\ot_B C\ot_A A_f})(\id_{(XY)M}\ot_B\tau^{-1}_{g}\ot_A \id_{A_f})\eq(\id_{(XY)M}\ot_B \walph^{-1}_{C,A_g,A_f})\\ 
&(\id_{(XY)M}\ot_B\id_C\ot_A\beta_{g,f}) (\id_{(XY)M}\ot_B\tau_{gf})\eq\walph_{X(YM),B_{gf},C}\end{align*}\begin{align*}
&=((\id_X\ot\balph_{Y,M,C}\ot_A\id_{A_g})\ot_A\id_{A_f})(\id_X\ot\walph^{-1}_{YM,C,A_g}\ot_A\id_{A_f})\\
&((\id_X\ot\id_{YM}\ot_B\tau_g)\ot_A\id_{A_f})(\id_X\ot \walph_{YM,A_g,C}\ot_A\id_{A_f})(\balph_{X,(YM)\ot_B B_g,C}\ot_A\id_{A_f})\\
&(\balph_{X,YM,B_g}\ot_B\id_{C\ot_A A_f})(\walph^{-1}_{X(YM),B_g,C}\ot_A A_f)\walph^{-1}_{X(YM),B_g\ot_B C, A_f}\\
&(\id_{X(YM)}\ot_B\tau^{-1}_{g}\ot_A \id_{A_f}) (\alpha_{X,Y,M}\ot_B\id_{C\ot_A A_g\ot_A A_f})(\id_{(XY)M}\ot_B \walph^{-1}_{C,A_g,A_f})\\ 
&(\id_{(XY)M}\ot_B\id_C\ot_A\beta_{g,f}) (\id_{(XY)M}\ot_B\tau_{gf})\eq\walph_{X(YM),B_{gf},C}\\
&=((\id_X\ot\balph_{Y,M,C}\ot_A\id_{A_g})\ot_A\id_{A_f})(\id_X\ot\walph^{-1}_{YM,C,A_g}\ot_A\id_{A_f})\\
&((\id_X\ot\id_{YM}\ot_B\tau_g)\ot_A\id_{A_f})(\id_X\ot \walph_{YM,A_g,C}\ot_A\id_{A_f})(\balph_{X,(YM)\ot_B B_g,C}\ot_A\id_{A_f})\\
&(\balph_{X,YM,B_g}\ot_B\id_{C\ot_A A_f})(\walph^{-1}_{X(YM),B_g,C}\ot_A\id_{A_f})\walph^{-1}_{X(YM),B_g\ot_B C, A_f}\\
&(\id_{X(YM)}\ot_B\tau^{-1}_{g}\ot_A \id_{A_f})(\id_{X(YM)}\ot_B \walph^{-1}_{C,A_g,A_f})(\alpha_{X,Y,M}\ot_B\id_{C\ot_A A_g\ot_A A_f})\\ 
&(\id_{(XY)M}\ot_B\id_C\ot_A\beta_{g,f})  (\id_{(XY)M}\ot_B\tau_{gf})\walph_{X(YM),B_{gf},C}.
\end{align*}
Now, using the definition of $c_{XY,M}$, the above expression equals to
\begin{align*}
&=((\id_X\ot\balph_{Y,M,C}\ot_A\id_{A_g})\ot_A\id_{A_f})(\id_X\ot\walph^{-1}_{YM,C,A_g}\ot_A\id_{A_f})\\
&((\id_X\ot\id_{YM}\ot_B\tau_g)\ot_A\id_{A_f})(\id_X\ot \walph_{YM,A_g,C}\ot_A\id_{A_f})(\balph_{X,(YM)\ot_B B_g,C}\ot_A\id_{A_f})\\
&(\balph_{X,YM,B_g}\ot_B\id_{C\ot_A A_f})(\walph^{-1}_{X(YM),B_g,C}\ot_A\id_{A_f})\walph^{-1}_{X(YM),B_g\ot_B C, A_f}\\
&(\id_{X(YM)}\ot_B\tau^{-1}_{g}\ot_A \id_{A_f})(\id_{X(YM)}\ot_B \walph^{-1}_{C,A_g,A_f})(\alpha_{X,Y,M}\ot_B\id_{C\ot_A A_g\ot_A A_f})\\ 
& (\id_{(XY)M}\ot_B\id_C\ot_A\beta_{g,f})  \walph_{(XY)M,C,A_{gf}}\eq(\balph^{-1}_{XY,M,C}\ot_A\id_{A_{fg}})c_{XY,M}\\
&=((\id_X\ot\balph_{Y,M,C}\ot_A\id_{A_g})\ot_A\id_{A_f})(\id_X\ot\walph^{-1}_{YM,C,A_g}\ot_A\id_{A_f})\\
&((\id_X\ot\id_{YM}\ot_B\tau_g)\ot_A\id_{A_f})(\id_X\ot \walph_{YM,A_g,C}\ot_A\id_{A_f})(\balph_{X,(YM)\ot_B B_g,C}\ot_A\id_{A_f})\\
&(\balph_{X,YM,B_g}\ot_B\id_{C\ot_A A_f})(\walph^{-1}_{X(YM),B_g,C}\ot_A\id_{A_f})\walph^{-1}_{X(YM),B_g\ot_B C, A_f}\\
&(\id_{X(YM)}\ot_B\tau^{-1}_{g}\ot_A \id_{A_f})(\id_{X(YM)}\ot_B \walph^{-1}_{C,A_g,A_f}) (\alpha_{X,Y,M}\ot_B\id_{C\ot_A A_g\ot_A A_f})\\ 
& \walph_{(XY)M,C,A_{gf}}\eq(\id_{((XY)M)\ot_B C}\ot_A\beta_{g,f}) \eq(\balph^{-1}_{XY,M,C}\ot_A\id_{A_{fg}})c_{XY,M}\\
&=((\id_X\ot\balph_{Y,M,C}\ot_A\id_{A_g})\ot_A\id_{A_f})(\id_X\ot\walph^{-1}_{YM,C,A_g}\ot_A\id_{A_f})\\
&((\id_X\ot\id_{YM}\ot_B\tau_g)\ot_A\id_{A_f})(\id_X\ot \walph_{YM,A_g,C}\ot_A\id_{A_f})(\balph_{X,(YM)\ot_B B_g,C}\ot_A\id_{A_f})\\
&(\balph_{X,YM,B_g}\ot_B\id_{C\ot_A A_f})(\walph^{-1}_{X(YM),B_g,C}\ot_A\id_{A_f})\walph^{-1}_{X(YM),B_g\ot_B C, A_f}\\
&(\id_{X(YM)}\ot_B\tau^{-1}_{g}\ot_A \id_{A_f}) (\id_{X(YM)}\ot_B \walph^{-1}_{C,A_g,A_f}) \walph_{X(YM),C,A_{g}\ot_A A_f}\\
&(\alpha_{X,Y,M}\ot_B\id_{C}\ot_A\id_{A_g\ot_A A_f})\eq(\id_{((XY)M)\ot_B C}\ot_A\beta_{g,f}) \eq(\balph^{-1}_{XY,M,C}\ot_A\id_{A_{fg}})c_{XY,M}.
\end{align*}
The last equations follow from naturality of $\walph$. Again, using a pentagon for $\walph$ the last expression equals to

\begin{align*}
&=((\id_X\ot\balph_{Y,M,C}\ot_A\id_{A_g})\ot_A\id_{A_f})(\id_X\ot\walph^{-1}_{YM,C,A_g}\ot_A\id_{A_f})\\
&((\id_X\ot\id_{YM}\ot_B\tau_g)\ot_A\id_{A_f})(\id_X\ot \walph_{YM,A_g,C}\ot_A\id_{A_f})(\balph_{X,(YM)\ot_B B_g,C}\ot_A\id_{A_f})\\
&(\balph_{X,YM,B_g}\ot_B\id_{C\ot_A A_f})(\walph^{-1}_{X(YM),B_g,C}\ot_A\id_{A_f})\walph^{-1}_{X(YM),B_g\ot_B C, A_f}\\
&(\id_{X(YM)}\ot_B\tau^{-1}_{g}\ot_A \id_{A_f})\walph_{X(YM),C,\ot_A A_g,A_f}(\walph_{X(YM),C,A_{g}}\ot_A \id_{A_f})\\
&\walph^{-1}_{(X(YM))\ot_B C,A_g,A_f}(\alpha_{X,Y,M}\ot_B\id_{C}\ot_A\id_{A_g\ot_A A_f}) (\id_{((XY)M)\ot_B C}\ot_A\beta_{g,f}) \\
& (\balph^{-1}_{XY,M,C}\ot_A\id_{A_{fg}})   c_{XY,M}\\
&=((\id_X\ot\balph_{Y,M,C}\ot_A\id_{A_g})\ot_A\id_{A_f})(\id_X\ot\walph^{-1}_{YM,C,A_g}\ot_A\id_{A_f})\\
&((\id_X\ot\id_{YM}\ot_B\tau_g)\ot_A\id_{A_f})(\id_X\ot \walph_{YM,A_g,C}\ot_A\id_{A_f})(\balph_{X,(YM)\ot_B B_g,C}\ot_A\id_{A_f})\\
&(\balph_{X,YM,B_g}\ot_B\id_{C\ot_A A_f})(\walph^{-1}_{X(YM),B_g,C}\ot_A\id_{A_f})\walph^{-1}_{X(YM),B_g\ot_B C, A_f}\\
&(\id_{X(YM)}\ot_B\tau^{-1}_{g}\ot_A \id_{A_f})\walph_{X(YM),C,\ot_A A_g,A_f}(\walph_{X(YM),C,A_{g}}\ot_A \id_{A_f})\\
&\walph^{-1}_{(X(YM))\ot_B C,A_g,A_f} (\alpha_{X,Y,M}\ot_B\id_{C}\ot_A\id_{A_g\ot_A A_f}) (\balph^{-1}_{((XY),M,C}\ot_A\id_{A_g\ot_A A_f}) \\
& (\alpha^{-1}_{X,Y,M\ot_B C}\ot_A\id_{A_g\ot_A A_f})\eq(\id_{(X(Y(M\ot_B C)))}\ot_A\beta_{g,f})(\alpha_{X,Y,M\ot_B C}\ot_A\id_{A_{gf}})   c_{XY,M}.
\end{align*}

Using a pentagon axiom for $\balph$ the above equation equals to
\begin{align*}
&=((\id_X\ot\balph_{Y,M,C}\ot_A\id_{A_g})\ot_A\id_{A_f})(\id_X\ot\walph^{-1}_{YM,C,A_g}\ot_A\id_{A_f})\\
&((\id_X\ot\id_{YM}\ot_B\tau_g)\ot_A\id_{A_f})(\id_X\ot \walph_{YM,A_g,C}\ot_A\id_{A_f})(\balph_{X,(YM)\ot_B B_g,C}\ot_A\id_{A_f})\\
&(\balph_{X,YM,B_g}\ot_B\id_{C\ot_A A_f})(\walph^{-1}_{X(YM),B_g,C}\ot_A\id_{A_f})\walph^{-1}_{X(YM),B_g\ot_B C, A_f}\\
&(\id_{X(YM)}\ot_B\tau^{-1}_{g}\ot_A \id_{A_f})\walph_{X(YM),C,\ot_A A_g,A_f}(\walph_{X(YM),C,A_{g}}\ot_A \id_{A_f})\\
&\walph^{-1}_{(X(YM))\ot_B C,A_g,A_f}(\alpha_{X,Y,M}\ot_B\id_{C}\ot_A\id_{A_g\ot_A A_f}) (\balph^{-1}_{X,(YM),C}\ot_A \id_{A_g\ot_A A_f})\\
& (\id_X\ot\balph^{-1}_{Y,M,C}\ot_A\id_{A_g\ot_A A_f})\eq(\id_{(X(Y(M\ot_B C)))}\ot_A\beta_{g,f})(\alpha_{X,Y,M\ot_B C}\ot_A\id_{A_{gf}})   c_{XY,M}\\
&=((\id_X\ot\balph_{Y,M,C}\ot_A\id_{A_g})\ot_A\id_{A_f})(\id_X\ot\walph^{-1}_{YM,C,A_g}\ot_A\id_{A_f})\\
&((\id_X\ot\id_{YM}\ot_B\tau_g)\ot_A\id_{A_f})(\id_X\ot \walph_{YM,A_g,C}\ot_A\id_{A_f})(\balph_{X,(YM)\ot_B B_g,C}\ot_A\id_{A_f})\\
&(\balph_{X,YM,B_g}\ot_B\id_{C\ot_A A_f})(\walph^{-1}_{X(YM),B_g,C}\ot_A\id_{A_f})\walph^{-1}_{X(YM),B_g\ot_B C, A_f}\\
&(\id_{X(YM)}\ot_B\tau^{-1}_{g}\ot_A \id_{A_f})\walph_{X(YM),C,\ot_A A_g,A_f}(\walph_{X(YM),C,A_{g}}\ot_A \id_{A_f})\\
&(\balph^{-1}_{X,(YM),C}\ot_A \id_{A_g\ot_A A_f})(\id_X\ot\balph^{-1}_{Y,M,C}\ot_A\id_{A_g\ot_A A_f}) \walph^{-1}_{X(Y(M\ot_B C)),A_g,A_f} \\
& (\id_{(X(Y(M\ot_B C)))}\ot_A\beta_{g,f})(\alpha_{X,Y,M\ot_B C}\ot_A\id_{A_{gf}})   c_{XY,M}.
\end{align*}
The above equal follows from the naturality of  $\walph$. Using the definition of $m_{X,Y,F(M)}$, the last expression above equals to

\begin{align*}
&=((\id_X\ot\balph_{Y,M,C}\ot_A\id_{A_g})\ot_A\id_{A_f})(\id_X\ot\walph^{-1}_{YM,C,A_g}\ot_A\id_{A_f})\\
&((\id_X\ot\id_{YM}\ot_B\tau_g)\ot_A\id_{A_f})(\id_X\ot \walph_{YM,A_g,C}\ot_A\id_{A_f})(\balph_{X,(YM)\ot_B B_g,C}\ot_A\id_{A_f})\\
&(\balph_{X,YM,B_g}\ot_B\id_{C\ot_A A_f})(\walph^{-1}_{X(YM),B_g,C}\ot_A\id_{A_f})\walph^{-1}_{X(YM),B_g\ot_B C, A_f}\\
&(\id_{X(YM)}\ot_B\tau^{-1}_{g}\ot_A \id_{A_f})\walph_{X(YM),C,\ot_A A_g,A_f}(\walph_{X(YM),C,A_{g}}\ot_A \id_{A_f})\\
&(\balph^{-1}_{X,(YM),C}\ot_A \id_{A_g\ot_A A_f}) (\id_X\ot\balph^{-1}_{Y,M,C}\ot_A\id_{A_g\ot_A A_f}) (\balph^{-1}_{X,(Y(M\ot_B C),A_g}\ot_A\id_{A_f}) \\
& m_{X,Y,(F(M)} c_{XY,M}\\
&=((\id_X\ot\balph_{Y,M,C}\ot_A\id_{A_g})\ot_A\id_{A_f})(\id_X\ot\walph^{-1}_{YM,C,A_g}\ot_A\id_{A_f})\\
&((\id_X\ot\id_{YM}\ot_B\tau_g)\ot_A\id_{A_f}) (\id_X\ot \walph_{YM,A_g,C}\ot_A\id_{A_f})(\balph_{X,(YM)\ot_B B_g,C}\ot_A\id_{A_f})\\
& (\balph_{X,YM,B_g}\ot_B\id_{C\ot_A A_f})(\walph^{-1}_{X(YM),B_g,C}\ot_A\id_{A_f})\eq\walph^{-1}_{X(YM),B_g\ot_B C, A_f}\\
&(\id_{X(YM)}\ot_B\tau^{-1}_{g}\ot_A \id_{A_f})\walph_{X(YM),C,\ot_A A_g,A_f}(\walph_{X(YM),C,A_{g}}\ot_A \id_{A_f})\\
&(\balph^{-1}_{X,(YM),C}\ot_A \id_{A_g\ot_A A_f})(\balph^{-1}_{X,(YM)\ot_B C,A_g}\ot_A\id_{A_f})\\
&((\id_X\ot\balph^{-1}_{Y,M,C}\ot_A\id_{A_g})\ot_A \id_{A_f}) m_{X,Y,(F(M)} c_{XY,M}.
\end{align*}
Again, by the naturality of $\balph$, we obtain the last equal. Using a pentagon axiom for $\balph$, the above expression equals to
\begin{align*}
&=((\id_X\ot\balph_{Y,M,C}\ot_A\id_{A_g})\ot_A\id_{A_f})(\id_X\ot\walph^{-1}_{YM,C,A_g}\ot_A\id_{A_f})\\
& ((\id_X\ot\id_{YM}\ot_B\tau_g)\ot_A\id_{A_f})(\balph_{X,YM,B_g\ot_B C}\ot_A \id_{A_g})\eq\walph^{-1}_{X(YM),B_g\ot_B C, A_f}\\
&(\id_{X(YM)}\ot_B\tau^{-1}_{g}\ot_A \id_{A_f})\walph_{X(YM),C,\ot_A A_g,A_f}(\walph_{X(YM),C,A_{g}}\ot_A \id_{A_f})\\
&(\balph^{-1}_{X,(YM),C}\ot_A \id_{A_g\ot_A A_f})(\balph^{-1}_{X,(YM)\ot_B C,A_g}\ot_A\id_{A_f})\\
&((\id_X\ot\balph^{-1}_{Y,M,C}\ot_A\id_{A_g})\ot_A \id_{A_f}) m_{X,Y,(F(M)} c_{XY,M}\\
&=((\id_X\ot\balph_{Y,M,C}\ot_A\id_{A_g})\ot_A\id_{A_f})(\id_X\ot\walph^{-1}_{YM,C,A_g}\ot_A\id_{A_f})\\
&(\balph_{X,YM,C\ot_A A_g}\ot_A \id_{A_f}) (\id_{X(YM)}\ot_B\tau_g\ot_A\id_{A_f})\walph^{-1}_{X(YM),B_g\ot_B C, A_f}\\
& (\id_{X(YM)}\ot_B\tau^{-1}_{g}\ot_A \id_{A_f})\walph_{X(YM),C,\ot_A A_g,A_f}\eq(\walph_{X(YM),C,A_{g}}\ot_A \id_{A_f})\\
&(\balph^{-1}_{X,(YM),C}\ot_A \id_{A_g\ot_A A_f})(\balph^{-1}_{X,(YM)\ot_B C,A_g}\ot_A\id_{A_f})\\
&((\id_X\ot\balph^{-1}_{Y,M,C}\ot_A\id_{A_g})\ot_A \id_{A_f}) m_{X,Y,(F(M)} c_{XY,M}\\
&=((\id_X\ot\balph_{Y,M,C}\ot_A\id_{A_g})\ot_A\id_{A_f}) (\id_X\ot\walph^{-1}_{YM,C,A_g}\ot_A\id_{A_f})\\
& (\balph_{X,YM,C\ot_A A_g}\ot_A \id_{A_f})(\walph_{X(YM),C,A_{g}}\ot_A \id_{A_f})\\
& (\balph^{-1}_{X,(YM),C}\ot_A \id_{A_g\ot_A A_f})(\balph^{-1}_{X,(YM)\ot_B C,A_g}\ot_A\id_{A_f})\\
&((\id_X\ot\balph^{-1}_{Y,M,C}\ot_A\id_{A_g})\ot_A \id_{A_f})   m_{X,Y,(F(M)} c_{XY,M}.
\end{align*}
The last equals follow from naturality of  $\balph$ and $\walph$. Finally, using a pentagon axiom for $\balph$, the above expression is the right hand of \eqref{modfunctor1}.
In the same way, we obtain \eqref{modfunctor2} and $F$ is a $\C$-module functor. Then equivalent data give rise to equivalent module categories.

\medbreak

 Now, suppose that $(F,c):\C_B\to\C_A$ is a $\C_H$-module equivalence. Since $F$ is a module functor, it follows from \cite[Proposition 3.11]{EO}, that $F$ is exact, hence there exist an invertible object $C\in{}_B\C_A$  such that $F=-\ot_B C$. The module structure on $F$ produces natural isomorphisms
 $$\overline{c}_{X,M}:((X\ot_B M)\ot_B B_{h^{-1}})\ot_B C\to (X\ot_B (M\ot_B C))\ot_A A_{h^{-1}},$$
 for any $h\in H$, $X\in \C_h$, $M\in {}_B\C_B$. To see this fact, one has to verify that the isomorphisms 
 $$c_{X,M}:((X\ot M)\ot_B B_{h^{-1}})\ot_B C\to (X\ot (M\ot_B C))\ot_A A_{h^{-1}}, $$
 composed with the map $$\pi_{X,M\ot_BC}\ot \id_{A_{h^{-1}}}:(X\ot (M\ot_B C))\ot_A A_{h^{-1}}\to (X\ot_B (M\ot_B C))\ot_A A_{h^{-1}},$$
 factorizes through the map
 $$\pi_{X,M}\ot \id_{B_{h^{-1}}}\ot\id_C:((X\ot M)\ot_B B_{h^{-1}})\ot_B C\to ((X\ot_B M)\ot_B B_{h^{-1}})\ot_B C. $$

  For any $h\in H$, the isomorphisms $\overline{c}_{X,M}$ induce $(B,A)$-bimodule isomorphisms $\tau_h:B_h\ot_B C\to C\ot_A A_h$, given by
  \begin{align*}\tau_h=&(\overline{\lambda}_C\ot \id_{A_h})(\gamma^{-1}_{h,h^{-1}}\ot\id_C\ot\id_{A_h})(\overline{\alpha}_{B_h,B_{h^{-1}},C}\ot\id)\overline{c}_{B_h,B_{h^{-1}}}\\ &\overline{\alpha}^{-1}_{B_h\ot_B B_{h^{-1}},B_h,C}(\gamma_{h,h^{-1}}\ot\id_{B_h\ot_B C})(\overline{\lambda}^{-1}_{B_h}\ot \id_C).
  \end{align*}
  Since $\overline{c}_{X,M}$ satisfy 
  \eqref{modfunctor1} and \eqref{modfunctor2}, then isomorphisms $\tau$ satisfy \eqref{typeA-equiv}.
 \epf

Now,  from a $\ca_H$-module we shall construct a type-A datum.

\begin{prop}\label{modcat-to-typeA-datum}  Let $g\in G$ and $H\subseteq G$ be a subgroup. Assume $A\in\ca$ is an algebra, and $(\ca_g)_A$ has an additional structure of  indecomposable exact $\ca_H$-module category such that $\Res^{\ca_H}_\ca (\ca_g)_A$ is indecomposable. There exists $(H^g, \{A_h\}_{h\in H^g}, \beta)$ a type-A datum such that the $\ca_H$-module structure presented in Lemma \ref{typeA-datum-to-modcat0}  coming from this type-A datum is equivalent to the original one.
 \end{prop}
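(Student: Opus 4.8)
The plan is to reverse-engineer the type-A datum from the action functors of the given $\ca_H$-module structure, i.e. to invert the construction of Lemma \ref{typeA-datum-to-modcat0}. First I would record what the hypotheses give. Since $(\ca_g)_A$ is an exact $\ca_H$-module and $\Res^{\ca_H}_\ca(\ca_g)_A$ is indecomposable, Lemma \ref{ind-rest}(1) shows that $\Res^{\ca_H}_\ca(\ca_g)_A$ is an indecomposable exact $\ca$-module. Transporting along the invertible $\ca$-bimodule $\ca_g$, this forces $\ca_A$ to be an indecomposable exact $\ca$-module as well, and Lemma \ref{action-equiv0} provides $\ca$-module equivalences $\ca_s\btc(\ca_g)_A\simeq(\ca_{sg})_A$, induced by the tensor product of $\Do$, for every $s\in G$.

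Next I would build the bimodules $A_h$. For each $f\in H$, restricting the action $\ott\colon\ca_H\times(\ca_g)_A\to(\ca_g)_A$ to $\ca_f$ yields a $\ca$-module functor $\otb_f\colon\ca_f\btc(\ca_g)_A\to(\ca_g)_A$; precomposing with the equivalence $\ca_f\btc(\ca_g)_A\simeq(\ca_{fg})_A$ above turns it into a $\ca$-module functor $(\ca_{fg})_A\to(\ca_g)_A$. The graded analogue of Proposition \ref{natu-bimod}, which follows from it by conjugating with the invertible bimodule $\ca_g$, identifies every such functor with tensoring by an $(A,A)$-bimodule supported in the component $\ca_{(f^{-1})^g}$; I define $A_{(f^{-1})^g}$ to be this bimodule and set $A_1=A$. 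By construction $\otb_f$ is then identified with $-\ot_A A_{(f^{-1})^g}$, matching \eqref{mod-action-cA}. Because $\ca_f$ is an invertible $\ca$-bimodule and the actions of $\ca_f$ and $\ca_{f^{-1}}$ compose to the identity, one gets $A_h\ot_A A_{h^{-1}}\simeq A$, so each $A_h$ is invertible with inverse $A_{h^{-1}}$.

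It remains to produce $\beta$ and check the axioms. The natural module isomorphisms $m_{f,h}$ of Lemma \ref{action-equiv}, which encode the associativity of $\ott$, are natural transformations comparing the composite $\otb_f\circ\otb_h$ with $\otb_{fh}$; under the correspondence above, in which composition of tensoring functors corresponds to $\ot_A$ of bimodules, they become $(A,A)$-bimodule isomorphisms $\beta_{f,h}\colon A_{fh}\to A_f\ot_A A_h$. The coherence identity \eqref{beta-assoc} is then obtained by feeding the module pentagon \eqref{mdgt1}, that is \eqref{left-modulecat1}, through the correspondence, the associativities $\walph$ of ${}_A\Do_A$ appearing exactly because composition of the identifications $\ca_f\btc(\ca_g)_A\simeq(\ca_{fg})_A$ is governed by $\walph$; the unit axiom \eqref{left-modulecat2} yields the normalization \eqref{beta-unit}. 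This assembles a type-A datum $(H^g,\{A_h\}_{h\in H^g},\beta)$ for $\ca_A$, and the $\ca_H$-module structure it produces via Lemma \ref{typeA-datum-to-modcat0} agrees with the original one essentially by design, since its action and associativity constraints were read off from $\otb_f$ and $m_{f,h}$; packaging these identifications into a module equivalence, as in Lemma \ref{typeA-datum-to-modcat}, completes the argument.

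The step I expect to be the main obstacle is the coherence bookkeeping: establishing the graded version of Proposition \ref{natu-bimod} compatibly with composition, and tracking how the various constraints $\balph$ and $\walph$, together with the equivalences $\ca_f\btc(\ca_g)_A\simeq(\ca_{fg})_A$, combine so that the pentagon for $\beta$ emerges in precisely the form \eqref{beta-assoc}. This is exactly where the long diagram chases underlying the surrounding lemmas are needed, and care is required to ensure that no spurious associativity isomorphism survives.
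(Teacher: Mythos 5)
Your proposal is correct and follows essentially the same route as the paper's proof: both invert Lemma \ref{typeA-datum-to-modcat0} by comparing the action functors $\otb_f$ with the tensor-product equivalences $(\ca_{fg})_A\simeq\ca_f\btc(\ca_g)_A$, identifying the resulting $\ca$-module equivalences $(\ca_{fg})_A\to(\ca_g)_A$ with functors $-\ot_A A_{(f^{-1})^g}$ via (a graded form of) Proposition \ref{natu-bimod}, and then transporting the associativity constraints $m_{f,h}$ of Lemma \ref{action-equiv} through these identifications so that the pentagon \eqref{mdgt1} becomes \eqref{beta-assoc}. The paper merely makes explicit the intermediate natural isomorphisms $\xi_{h,g}$ and $\widetilde{m}_{f,h}$ and the long coherence computation that you correctly flag as the main bookkeeping burden.
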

 
 \pf   Lemma \ref{action-equiv} implies that there are  equivalences of $\ca$-modules 
 $$ \Psi_h:\ca_h\boxtimes_\ca (\ca_g)_A \to (\ca_{hg})_A, \quad \otb_h:\ca_h\boxtimes_\ca (\ca_g)_A \to (\ca_g)_A, \text{ for any }h\in H,$$
 where $ \Psi_h$ is the restriction of the tensor product, that is the restriction of the functor $M_{h,g}$ (see Proposition \ref{Res}), and $\otb_h$ is the restriction of the action of $\ca_H$ on $(\ca_g)_A$. This implies that there exists an equivalence of $\ca$-modules 
 $R_{h,g}:(\ca_{hg})_A\to (\ca_g)_A$. Therefore, 
 there exists an invertible $A$-bimodule $A_{g^{-1}h^{-1}g}\in {}_A(\ca_{(h^{-1})^g})_A$ such that   
 $$R_{h,g}(X)=X\ot_A A_{(h^{-1})^g}.$$ The construction of the functor $R_{h,g}$ implies that, there are $\ca$-module natural isomorphisms $\xi_{h,g}: \otb_h\to R_{h,g}\circ \Psi_h$. Note that if $X\in \ca_h$, $M\in\ca_A$, then 
 $$ R_{h,g}\circ \Psi_h(X\boxtimes M)= (X\ot M)\ot_A A_{(h^{-1})^g}.$$
 This is precisely the action $\ott$ defined in Lemma \ref{typeA-datum-to-modcat0}.
Again, Lemma \ref{action-equiv} implies that, for any $f,h\in H$, there are natural module isomorphisms 
 $$m_{f,h}: \otb_{fh} (M_{f,h}\boxtimes \Id_{\C_A})\to \otb_f (\Id_{\C_f}\boxtimes \otb_h). $$
 Whence, using the isomorphisms   $\xi_{h,g}$ we get natural module isomorphisms 
 \begin{equation}\label{iso-funct1} \widetilde{m}_{f,h}:R_{fh,g}  \Psi_{fh} (M_{f,h}\boxtimes \Id_{(\C_g)_A})\to  R_{f,g} \Psi_{f} (\Id_{\ca_f}\boxtimes R_{h,g})(\Id_{\ca_f}\boxtimes  \Psi_{h,g}),
 \end{equation}
 $$ (\widetilde{m}_{f,h})_{X,Z,M} = \big(\id_X\ot  (\xi_{h,g})_{Z,M}\ot \id \big)(\xi_{f,g})_{X,Z\otb M} (m_{f,h})_{X,Z,M}  (\xi^{-1}_{fh,g})_{X\ot Z,M}$$
for any $f,h\in H$, $X\in \C_f, Z\in \C_h$, $M\in\C_A$. Since  isomorphisms $m_{f,h}$ satisfy \eqref{mdgt1}, then isomorphisms  $ \widetilde{m}_{f,h}$ satisfy
\begin{multline}\label{mdgt2} (\widetilde{m}_{f,l})_{X,Y,Z\ott M} (\widetilde{m}_{fl,h})_{X\ot Y,Z,  M} =\\
=(\id_X\ot (\widetilde{m}_{l,h})_{Y,Z,  M}\ot \id) (\widetilde{m}_{f,lh})_{X,Y\ot Z,  M} (\alpha_{X,Y,Z}\ot \id_M\ot \id),
\end{multline}
for any $f,l,h\in H$, $X\in \C_f, Y\in\C_l, Z\in \C_h$. Indeed, denote  $\widetilde{h}=g^{-1}h^{-1}g$. The left hand side of \eqref{mdgt2} is equal to
\begin{align*}
&(\id_X\ot (\xi_{l,g})_{Y,(Z\ot M)\ot_A A_{{\widetilde{h}}}}\ot\id)(\xi_{f,g})_{X,Y\otb ((Z\ot M)\ot_A A_{{\widetilde{h}}})}(m_{f,l})_{X,Y,(Z\ot M)\ot_A A_{{\widetilde{h}}}}\\
&(\xi^{-1}_{fl,g})_{X\ot Y,(Z\otb M)}\circ (\id_{X\ot Y}\ot(\xi_{h,g})_{Z,M}\ot\id)(\xi_{fl,g})_{X\ot Y,Z\otb M}\\
&(m_{fl,h})_{X\ot Y,Z,M}(\xi^{-1}_{flh,g})_{(X\ot Y)\ot Z,M}\\
=&(\id_X\ot (\xi_{l,g})_{Y,(Z\ot M)\ot_A A_{{\widetilde{h}}}}\ot\id)(\xi_{f,g})_{X,Y\otb ((Z\ot M)\ot_A A_{{\widetilde{h}}})}(m_{f,l})_{X,Y,(Z\ot M)\ot_A A_{{\widetilde{h}}}}\\
&(\id_{X\ot Y}\ot(\xi_{h,g})_{Z,M}\ot\id)(m_{fl,h})_{X\ot Y,Z,M}(\xi^{-1}_{flh,g})_{(X\ot Y)\ot Z,M}.\end{align*}
The last equals follows from naturality of $\xi$. Using the naturality of $m$, the above expresssion equals to

\begin{align*}
=&(\id_X\ot (\xi_{l,g})_{Y,(Z\ot M)\ot_A A_{{\widetilde{h}}}}\ot\id)(\xi_{f,g})_{X,Y\otb ((Z\ot M)\ot_A A_{{\widetilde{h}}})}(\id_{X}\ot\id_{ Y}\ot(\xi_{h,g})_{Z,M})\\
&(m_{f,l})_{X,Y,Z\otb M}(m_{fl,h})_{X\ot Y,Z,M}(\xi^{-1}_{flh,g})_{(X\ot Y)\ot Z,M}\\
=&(\id_X\ot (\xi_{l,g})_{Y,(Z\ot M)\ot_A A_{{\widetilde{h}}}}\ot\id)(\xi_{f,g})_{X,Y\otb ((Z\ot M)\ot_A A_{{\widetilde{h}}})}(\id_{X}\ot\id_{ Y}\ot(\xi_{h,g})_{Z,M})\\
&(\id_X\ot (m_{l,h})_{Y,Z,M})(m_{f,lh})_{X,Y\ot Z, M}(\alpha_{X,Y,Z}\ot\id_M)(\xi^{-1}_{flh,g})_{(X\ot Y)\ot Z,M}\\
=&(\id_X\ot (\xi_{l,g})_{Y,(Z\ot M)\ot_A A_{{\widetilde{h}}}}\ot\id)(\xi_{f,g})_{X,Y\otb ((Z\ot M)\ot_A A_{{\widetilde{h}}})}(\id_{X}\ot\id_{ Y}\ot(\xi_{h,g})_{Z,M})\\
&(\id_X\ot (m_{l,h})_{Y,Z,M})(m_{f,lh})_{X,Y\ot Z, M}(\xi^{-1}_{flh,g})_{(X\ot Y)\ot Z,M}(\alpha_{X\ot Y,Z,M}\ot\id)\end{align*}\begin{align*}
=&(\id_X\ot (\xi_{l,g})_{Y,(Z\ot M)\ot_A A_{{\widetilde{h}}}}\ot\id)(\xi_{f,g})_{X,Y\otb ((Z\ot M)\ot_A A_{{\widetilde{h}}})}(\id_{X}\ot\id_{ Y}\ot(\xi_{h,g})_{Z,M})\\
&(\id_X\ot (m_{l,h})_{Y,Z,M})(\xi_{f,g}^{-1})_{X,(Y\ot Z)\otb M}(\id_X\ot(\xi^{-1}_{lh,g})_{(Y\ot Z,M}\ot\id)\\
&(\widetilde{m}_{f,lh})_{X,Y\ot Z, M}(\alpha_{X\ot Y,Z,M}\ot\id)\\
=&(\id_X\ot (\xi_{l,g})_{Y,(Z\ot M)\ot_A A_{{\widetilde{h}}}}\ot\id)(\xi_{f,g})_{X,Y\otb ((Z\ot M)\ot_A A_{{\widetilde{h}}})}(\id_{X}\ot\id_{ Y}\ot(\xi_{h,g})_{Z,M})\\
&(\xi_{f,g}^{-1})_{X,(Y\otb Z)\otb M}(\id_X\ot (m_{l,h})_{Y,Z,M}\ot\id)(\id_X\ot(\xi^{-1}_{lh,g})_{Y\ot Z,M}\ot\id)\\
&(\widetilde{m}_{f,lh})_{X,Y\ot Z, M}(\alpha_{X\ot Y,Z,M}\ot\id)\\
=&(\id_X\ot (\xi_{l,g})_{Y,(Z\ot M)\ot_A A_{{\widetilde{h}}}}\ot\id)(\xi_{f,g})_{X,Y\otb ((Z\ot M)\ot_A A_{{\widetilde{h}}})}(\id_{X}\ot\id_{ Y}\ot(\xi_{h,g})_{Z,M})\\
&(\xi_{f,g}^{-1})_{X,(Y\otb Z)\otb M}(\id_X\ot(\xi_{l,g}^{-1})_{Y,Z\otb M}\ot\id)(\id_X\ot (\xi^{-1}_{h,g})_{Y,Z\otb M}\ot\id)\\
&(\id_X\ot(\widetilde{m}_{l,h})_{Y,Z,M}\ot\id)(\widetilde{m}_{f,lh})_{X,Y\ot Z, M}(\alpha_{X\ot Y,Z,M}\ot\id)\\
=&(\id_X\ot (\xi_{l,g})_{Y,(Z\ot M)\ot_A A_{{\widetilde{h}}}}\ot\id)(\id_X\ot (\xi_{h,g}^{-1})_{Y,Z\otb M}\ot\id)(\id_X\ot(\xi_{l,g}^{-1})_{Y,Z\otb M}\ot\id)\\
&(\id_X\ot(\xi^{-1}_{h,g})_{Y,Z\otb M}\ot\id))(\id_X\ot(\widetilde{m}_{l,h})_{Y,Z,M}\ot\id)(\widetilde{m}_{f,lh})_{X,Y\ot Z, M}(\alpha_{X\ot Y,Z,M}\ot\id)\\
=&(\id_X\ot(\widetilde{m}_{l,h})_{Y,Z,M}\ot\id)(\widetilde{m}_{f,lh})_{X,Y\ot Z, M}(\alpha_{X\ot Y,Z,M}\ot\id),
\end{align*}
the second equality follows from \eqref{mdgt1}; the third, fifth, seventh and eight from naturality of $\xi$, the fourth and sixth from 
definition of $\widetilde{m}$.
\eq
\smallbreak

The associativity of the tensor category $\Do$ imply that there are  $\ca$-module natural isomoprhisms 
 $$  \Psi_{f} (\Id_{\ca_f}\boxtimes R_{h,g}) \simeq R_{fhf^{-1},fg} M_{f,hg}, \ \ \Psi_{fh}(M_{f,h}\btc \Id_{(\C_g)_A})\simeq M_{f,hg}(\id_{\C_f}\btc \Psi_h).$$
Thus, it follows from the equivalence \eqref{iso-funct1} that there are 
$\ca$-module natural isomorphisms $ \gamma_{f,h}:R_{fh,g}\simeq  R_{f,g} \circ R_{fhf^{-1},fg}$. Using  Proposition  \ref{natu-bimod}, we obtain that there are $A$-bimodule isomorphisms $\beta_{(h^{-1})^g, (f^{-1})^g}: A_{(h^{-1}f^{-1})^g}\to A_{(h^{-1})^g}\ot_A A_{(f^{-1})^g} $ such that
$$(\gamma_{f,h})_X = \walph^{-1}_{X,A_{(h^{-1})^g},A_{(f^{-1})^g}} (\id_X\ot \beta_{(h^{-1})^g, (f^{-1})^g}).$$
Reconstructing the isomorphisms $\widetilde{m}_{f,h}$, we obtain that 
\begin{align*}(\widetilde{m}_{f,h})_{X,Y,M}=&(\balph_{X,Y\ot M, A_{(h^{-1})^g}}\ot\id)\walph^{-1}_{X\ot (Y\ot M), A_{(h^{-1})^g}, A_{(f^{-1})^g}}\\&(\id\ot \beta_{(h^{-1})^g,(f^{-1})^g})(\alpha_{X,Y,M }\ot \id ).
\end{align*}
Note that this formula coincides with the associativity defined in \eqref{associ-typeA-b}.
 Equation \eqref{mdgt2} implies that morphisms $\beta$ satisfy \eqref{beta-assoc}. Hence, we get a type-A datum $(H^g, \{A_h\}_{h\in H^g}, \beta)$ 
 such that, by construction, the $\ca_H$-module structure presented in Lemma \ref{typeA-datum-to-modcat0}  coming from this type-A datum is equivalent to the original one.
\epf

\begin{defi}\label{defi:associated-typeA} Let $H\subseteq G$ be a subgroup and let $\No$ be an indecomposable exact $\ca_H$-module such that $Res^{\C_H}_\C\N$ remains indecomposable as a $\ca$-module. There exists an algebra $A\in\ca$ such that $\Res^{\C_H}_\C\No\simeq\ca_A$ as $\ca$-modules. The type-A data  $(H, \{A_h\}_{h\in H}, \beta) $ obtained in Proposition \ref{modcat-to-typeA-datum}, using $g=1$, is the \emph{associated} type-A data to $\No$.
\end{defi}

 It follows from Lemma \ref{typeA-datum-to-modcat}, that the equivalence class of the type-A data associated to a  $\ca_H$-module $\N$, does not depend on the Morita class of the algebra $A$ such that $\Res^{\C_H}_\C\No\simeq\ca_A$ as $\ca$-modules. 
 
\medbreak

Now, we shall explain  the classification of indecomposable exact $\D$-modules obtained in \cite{Ga2} and \cite{MM}. This classification will be done in two steps. In the first step we will associate to any 
  indecomposable exact $\D$-module a pair $(H,\N)$, where $H\subseteq G$ is a subgroup, and $\N$ is an indecomposable exact $\C_H$-module such that the restriction $Res^{\C_H}_\C\N$ remains indecomposable as a $\C$-module.
In the second step, using Proprosition \ref{modcat-to-typeA-datum}, we shall associate, to any such pair $(H,\N)$ a type-A datum for $Res^{\C_H}_\C\N$.

\subsection{First step} Let $H$ be a subgroup of $G$, and let $\N$ be an indecomposable exact $\C_H$-module such that $\M=Res^{\C_H}_\C\N $ is an indecomposable %exact 
$\C$-module. Under these assumptions, we shall call $(H,\N)$ a \textit{type-1 pair.} 

Note that if $(H,\N)$ is a type-1 pair, then, for any $g\in G$, the category $\C_{gH}\boxtimes_{\C_H} \N$ has an action of $\C_{gHg^{-1}}$, such that 
$$ Res^{\C_{gHg^{-1}}}_\C \big(\C_{gH}\boxtimes_{\C_H} \N\big)\simeq \C_g\boxtimes_\C \M.  $$
By Lemma \ref{action-equiv0}, $\C_g\boxtimes_\C \M$ is an indecomposable $\C$-module, then $(gHg^{-1}, \C_{gH} \boxtimes_{\C_H}\N)$ is again a type-1 pair.

\begin{defi} Two type-1 pairs $(H,\N)$, $(F,\N')$ are \textit{equivalent} if there exists $g\in G$ such that 
\begin{itemize}
\item[$\bullet$]  $H=gFg^{-1}$, and
\item[$\bullet$]  there is an equivalence $\N\simeq\C_{gF} \boxtimes_{\C_F} \N'$ of $\C_H$-modules.
\end{itemize} 

\end{defi}

It follows from Lemma \ref{ind-rest} (2), that if  $(H,\N)$ is a type-1 pair,  then $\Ind^\D_{\C_H} \N$ is an indecomposable exact $\D$-module. We shall see that this establishes a bijective correspondence between equivalence classes of indecomposable exact $\D$-modules and equivalence classes of type-1 pairs.

\medbreak

Let's start with  an exact $\D$-module  $\N$. It follows from Lemma \ref{ind-rest} (1) that $Res^\D_\C\N$ is exact. Then, we can decompose it as $Res^\D_\C\N=\oplus_{i=1}^n \N_i$, into a direct sum of indecomposable exact $\C$-modules. Denoted by $\overline{\N_i}$ the equivalence class of the $\C$-module $\N_i$, and by $X_\N=\{ \overline{\N_i}:i=1\dots n\}$.

The group $G$ acts on the set $X_\N$. Namely, $g\cdot \overline{\N_i}=\overline{\N_j}$, if 
$\C_g\bt_{\C}\N_i\simeq \N_j$ as $\C$-modules. This action is transitive since $\N$ is indecomposable as a $\D$-module. The next result is well-known.

\begin{lema}\label{X-set} If $\N, \M$ are equivalent $\D$-modules, then $X_\M\simeq X_\N$ as $G$-sets. If $H=\{f\in G| f\cdot \overline{\N_1}=\overline{\N_1}\}$, then $X_\N\simeq G/H$ as $G$-sets. \qed
\end{lema}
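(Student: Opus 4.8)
The plan is to reduce both statements to elementary facts about transitive $G$-sets, the only categorical ingredient being that a $\D$-module equivalence commutes with the action of each homogeneous component $\C_g$. Throughout I would keep in mind that the module categories here are finite abelian $\ku$-linear, hence Krull--Schmidt, so that decompositions into indecomposables are unique up to reordering and equivalence; this is what makes the sets $X_\N$, and the $G$-action on them, well defined (the action lands in a single class because $\C_g\boxtimes_\C\N_i$ is indecomposable by Lemma \ref{action-equiv0}).

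For the first assertion, let $T\colon\N\to\M$ be an equivalence of $\D$-modules. Restricting the action to $\C$, the functor $T$ becomes an equivalence $\Res^\D_\C\N\to\Res^\D_\C\M$ of $\C$-modules, so it carries the decomposition $\oplus_{i=1}^n\N_i$ to a decomposition $\oplus_{i=1}^n T(\N_i)$ of $\Res^\D_\C\M$ into indecomposable exact $\C$-modules. By Krull--Schmidt each $T(\N_i)$ is equivalent, as a $\C$-module, to some $\M_j$, and the assignment $\varphi\colon X_\N\to X_\M$, $\overline{\N_i}\mapsto\overline{T(\N_i)}$, is well defined and bijective, its inverse being induced by a quasi-inverse of $T$.

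It then remains to check that $\varphi$ is $G$-equivariant, and this is where the $\D$-module (rather than merely $\C$-module) structure of $T$ is essential. Since $T$ is a $\D$-module functor, for every $X\in\C_g$ and $N\in\N$ there is a natural isomorphism $T(X\otb N)\simeq X\otb T(N)$; passing to the functors induced on balanced tensor products this yields $T(\C_g\boxtimes_\C\N_i)\simeq\C_g\boxtimes_\C T(\N_i)$ as $\C$-modules. Hence if $g\cdot\overline{\N_i}=\overline{\N_j}$, i.e. $\C_g\boxtimes_\C\N_i\simeq\N_j$, then $\C_g\boxtimes_\C T(\N_i)\simeq T(\N_j)$, so that $g\cdot\varphi(\overline{\N_i})=\overline{T(\N_j)}=\varphi(g\cdot\overline{\N_i})$. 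Thus $\varphi$ is an isomorphism of $G$-sets and $X_\N\simeq X_\M$.

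For the second assertion I would simply invoke the orbit--stabilizer correspondence. The action of $G$ on $X_\N$ is transitive because $\N$ is indecomposable as a $\D$-module, and $H=\St(\overline{\N_1})$ is by definition the stabilizer of the base point $\overline{\N_1}$. The orbit map $g\mapsto g\cdot\overline{\N_1}$ therefore factors through a map $G/H\to X_\N$, $gH\mapsto g\cdot\overline{\N_1}$, which is well defined and injective since $g_1\cdot\overline{\N_1}=g_2\cdot\overline{\N_1}$ is equivalent to $g_2^{-1}g_1\in H$, surjective by transitivity, and manifestly $G$-equivariant; hence $X_\N\simeq G/H$ as $G$-sets. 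The main point requiring genuine care is the equivariance of $\varphi$ above; everything else is formal bookkeeping.
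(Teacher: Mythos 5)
The paper itself gives no proof of this lemma: it is stated as ``well-known'' with the \verb|\qed| placed directly in the statement, so there is nothing to compare against line by line. Your argument is correct and supplies exactly the standard justification: uniqueness of the decomposition into indecomposable exact $\C$-module summands (your Krull--Schmidt point) makes the bijection $\varphi$ well defined, the module-functor constraint of $T$ gives equivariance, and the second assertion is orbit--stabilizer once transitivity (which the paper establishes just before the lemma, via indecomposability of $\N$ as a $\D$-module) is invoked.

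Two small remarks. First, the expression $T(\C_g\boxtimes_\C\N_i)$ is an abuse of notation: $\C_g\boxtimes_\C\N_i$ is an abstract category, not a subcategory of $\N$, so $T$ cannot literally be applied to it. What you mean, and what makes the argument run, is that the constraint $T(X\otb N)\simeq X\otb T(N)$ is $\C$-balanced and hence descends to a natural isomorphism between the functors $\C_g\boxtimes_\C\N\to\M$ induced by $T\circ(\text{action on }\N)$ and $(\text{action on }\M)\circ(\id_{\C_g}\boxtimes T)$; restricting to the summand $\C_g\boxtimes_\C\N_i$ then identifies $T(\N_j)$ with $\C_g\boxtimes_\C T(\N_i)$ whenever $\C_g\boxtimes_\C\N_i\simeq\N_j$. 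Second, your claim that the $\D$-module structure of $T$ is ``essential'' for equivariance is stronger than necessary under the paper's definition of the action: since $g\cdot\overline{\N_i}=\overline{\C_g\boxtimes_\C\N_i}$ is a formula depending only on the $\C$-equivalence class $\overline{\N_i}$ (the $\D$-structure of $\N$ is used only to guarantee this class again lies in $X_\N$), once the restricted equivalence matches up the summands, so that $X_\N$ and $X_\M$ consist of the same classes, the two $G$-actions agree automatically. Your longer route through the module-functor structure is valid, just not the minimal one; neither point affects correctness.
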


\begin{prop}\label{equiv-pairs}\cite[Proposition 4.6]{Ga2} Let $(H, \N)$ and $(F,\M)$ be two type-1 pairs. The following statements are equivalent.
\begin{itemize}
\item[1.] There exists an equivalence of $\D$-modules
$\Ind^{\D}_{\C_H} \N \simeq \Ind^{\D}_{\C_F} \M.$

\item[2.] The type-1 pairs $(H, \N)$, $(F,\M)$ are equivalent.
\end{itemize}

\end{prop}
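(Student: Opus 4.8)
The plan is to prove the two implications separately, establishing $(2)\Rightarrow(1)$ first since it is reused.

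For $(2)\Rightarrow(1)$, suppose the pairs are equivalent, so there is $g\in G$ with $H=gFg^{-1}$ and $\N\simeq\C_{gF}\boxtimes_{\C_F}\M$ as $\C_H$-modules. Then by associativity of the balanced tensor product $\Ind^\D_{\C_H}\N=\D\boxtimes_{\C_H}\N\simeq\D\boxtimes_{\C_H}\C_{gF}\boxtimes_{\C_F}\M$, so it suffices to show $\D\boxtimes_{\C_H}\C_{gF}\simeq\D$ as $(\D,\C_F)$-bimodules via the tensor product of $\D$. Since $gF=Hg$, the right-handed form of Proposition \ref{Res}(2) gives $\C_{gF}=\C_{Hg}\simeq\C_H\boxtimes_\C\C_g$ as $(\C_H,\C)$-bimodules, whence $\D\boxtimes_{\C_H}\C_{gF}\simeq\D\boxtimes_\C\C_g\simeq\bigoplus_{x\in G}\C_x\boxtimes_\C\C_g\simeq\bigoplus_{x\in G}\C_{xg}\simeq\D$ by Proposition \ref{Res}(1). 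The composite is induced by $D\boxtimes Y\mapsto D\ot Y$, hence is a $\D$-module functor, giving $\Ind^\D_{\C_H}\N\simeq\Ind^\D_{\C_F}\M$.

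For $(1)\Rightarrow(2)$, write $\P=\Ind^\D_{\C_H}\N\simeq\Ind^\D_{\C_F}\M$. First I would recover the conjugacy of $H$ and $F$. By Lemma \ref{ind-rest}(1), $\Res^\D_\C\P$ is exact, and from the $\Ind^\D_{\C_H}\N$ presentation it decomposes as $\bigoplus_{\bar x\in G/H}\C_x\boxtimes_\C\M_H$ with $\M_H=\Res^{\C_H}_\C\N$ (using $\C_{xH}\boxtimes_{\C_H}\N\simeq\C_x\boxtimes_\C\N$ from Proposition \ref{Res}(2)), each summand indecomposable by Lemma \ref{action-equiv0}. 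The crucial point, and the place where care is needed, is that one must use the $G$-set of indecomposable $\C$-blocks counted with multiplicity (the genuine $G$-action $\Bc\mapsto$ the block containing $\C_g\ot\Bc$, on the finite set of blocks of a fixed decomposition), and NOT the set $X_\P$ of equivalence classes from Lemma \ref{X-set}: the stabilizer of the block of $\N$ in the former is exactly $H$, whereas the latter recovers only the a priori larger $\C$-stabilizer $\{k:\C_k\boxtimes_\C\M_H\simeq\M_H\}$. With the block $G$-set, the $\C_H$-presentation identifies it with $G/H$ (stabilizer of the $\N$-block equal to $H$) and the $\C_F$-presentation with $G/F$; as it is an invariant of $\P$, one gets $G/H\simeq G/F$ as $G$-sets, and the element $g$ carrying the $\M$-block to the $\N$-block yields $H=gFg^{-1}$.

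It then remains to match the module data. I would replace $(F,\M)$ by the equivalent pair $(H,\M')$ with $\M'=\C_{gF}\boxtimes_{\C_F}\M$; by $(2)\Rightarrow(1)$ this gives $\Ind^\D_{\C_H}\M'\simeq\P\simeq\Ind^\D_{\C_H}\N$, reducing everything to: for type-1 pairs with the same subgroup $H$, $\Ind^\D_{\C_H}\N\simeq\Ind^\D_{\C_H}\M'$ forces $(H,\N)$ and $(H,\M')$ equivalent. Restricting the equivalence to $\C_H$ and decomposing $\Res^\D_{\C_H}\Ind^\D_{\C_H}\N\simeq\bigoplus_{d\in H\backslash G/H}\C_d\boxtimes_{\C_H}\N$ into indecomposable $\C_H$-modules (indecomposability because each double coset is a single $H$-orbit of left cosets, as in Lemma \ref{acti-equivalence}), Krull--Schmidt shows the trivial-coset summand $\M'$ of the right side is $\C_H$-equivalent to some $\C_{HxH}\boxtimes_{\C_H}\N$. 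Since $\Res^{\C_H}_\C\M'\simeq\C_g\boxtimes_\C\M_F$ is indecomposable, so is $\Res^{\C_H}_\C(\C_{HxH}\boxtimes_{\C_H}\N)\simeq\bigoplus_{x'H\subseteq HxH}\C_{x'}\boxtimes_\C\M_H$, forcing $HxH=xH$, i.e.\ $x\in N_G(H)$; then $\C_{HxH}\boxtimes_{\C_H}\N=\C_{xH}\boxtimes_{\C_H}\N$ is the $x$-conjugate of $\N$, so $(H,\M')$ and $(H,\N)$ are equivalent pairs. Transitivity of the equivalence of type-1 pairs gives $(H,\N)\sim(F,\M)$. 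The main obstacle is exactly the conjugacy recovery: distinguishing the subgroup $H$ of the pair from the possibly larger $\C$-stabilizer of $\Res^{\C_H}_\C\N$, which forces one to track the indecomposable $\C$-blocks with multiplicity rather than up to equivalence; this is the delicate point underlying the correction to \cite{MM} advertised in the introduction, and once it is handled the remaining steps follow formally from Propositions \ref{Res} and \ref{natu-bimod} together with Krull--Schmidt.
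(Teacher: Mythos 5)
Your proof is correct, and it diverges from the paper's in the direction $(1)\Rightarrow(2)$, in a way worth recording. Direction $(2)\Rightarrow(1)$ is essentially the paper's own computation: the paper expands $\D\bt_{\C_F}\M$ over coset representatives and uses the right $\C_F$-module equivalences $\C_{aH}\bt_{\C_H}\C_{gF}\simeq\C_{agF}$, while you package the same use of Proposition \ref{Res} and associativity of $\bt$ into the single statement $\D\bt_{\C_H}\C_{gF}\simeq\D$ as $(\D,\C_F)$-bimodules. In $(1)\Rightarrow(2)$ you do two things differently. First, for the conjugacy of $H$ and $F$: the paper invokes Lemma \ref{X-set}, whose $G$-set $X_{\Pc}$ consists of \emph{equivalence classes} of the indecomposable $\C$-summands, and asserts that $\stab(\overline{\Res^{\C_H}_\C\N})=H$. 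As you observe, with that definition the stabilizer is only the a priori larger group $\{k\in G:\C_k\btc\Res^{\C_H}_\C\N\simeq\Res^{\C_H}_\C\N\}$; in the pointed setting of Section \ref{pointed-exa} every summand is equivalent to $\vect_\ku$, so $X_{\Pc}$ is a single point and that stabilizer is all of $G$. Your replacement of $X_{\Pc}$ by the $G$-set of actual blocks of the canonical decomposition, whose stabilizer at the $\N$-block is $H$ on the nose, is therefore not a cosmetic variant but a repair of the paper's wording, and it is the reading under which the argument of \cite{Ga2} actually delivers $G/H\simeq G/F$. Second, for matching the module data the paper is shorter: having chosen $g$ so that the given equivalence carries the block $\C_{gF}\bt_{\C_F}\M$ onto the block $\N$ (both preserved by the $\C_H$-action), it restricts that equivalence and obtains $\C_{gF}\bt_{\C_F}\M\simeq\N$ as $\C_H$-modules in one stroke. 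You instead reduce to the case of equal subgroups and run a double-coset decomposition of $\Res^\D_{\C_H}\Ind^\D_{\C_H}\N$, using Krull--Schmidt and a count of $\C$-blocks to force the summand matched with $\M'$ to be $\C_{xH}\bt_{\C_H}\N$ with $x\in N_G(H)$. Both finishes are valid: yours is longer but leans only on uniqueness of decompositions (which the paper also uses implicitly), while the paper's direct restriction is quicker but requires exactly the block-level bookkeeping that your first step sets up, both to select $g$ and to see that the restricted functor is $\C_H$-linear.
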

\begin{proof} $(1)\Rightarrow (2)$
If $\Ind^{\D}_{\C_H} \N, \Ind^{\D}_{\C_F} \M$ are equivalent as $\C$-modules, then, by Lemma \ref{X-set},  $ X_{\Ind^\D_{\C_{H}}\N}\simeq X_{\Ind^\D_{\C_{F}}\M}$ as $G$-sets.

Decompose $Res^\D_\C(\Ind^{\D}_{\C_H} \N)$ into a direct sum $\oplus_{i=1}^n \A_i$ of indecomposable $\C$-modules. Since the module  $Res^{\C_H}_\C\N$ is included in $Res^\D_\C(\Ind^{\D}_{\C_H} \N)$, and it remains indecomposable as $\C$-module, we can assume $\A_1\simeq Res^{\C_H}_\C\N$. It is not difficult to see that $\stab(\overline{Res^{\C_H}_\C\N})
=H$.  By Lemma \ref{X-set}, $G/H\simeq G/F$, thus, there exist $g\in G$ such that $H=gFg^{-1}$.

The restriction of the given equivalence, gives us $$\C_{gF}\bt_{\C_{F}}\M\simeq \C_H\bt_{\C_H}\N\simeq\N,$$ 
as $\C_H$-modules, where the $\C_H$-action over $\C_{gF}$ is induced by the tensor product of $\C$, and is well defined since $Hg=gF$.

\medbreak

$(2)\Rightarrow (1)$ Let $g\in G$ such that $H=gFg^{-1}$ and $\C_{gF}\bt_{\C_{F}}\M\simeq \N$ as $\C_H$-modules. For any $a\in G$, there are equivalences of right $\C_{F}$-modules
\begin{align}\label{equiv-as-mt1}
\C_{aH}\bt_{\C_H}\C_{gF}&\simeq (\C_{a}\bt_{\C} \C_H )\bt_{\C_H}\C_{gF}\simeq \C_{a}\btc(\C_{H}\bt_{\C_H}\C_{gF})
\simeq \C_{a}\btc\C_{gF}\simeq \C_{agF},
\end{align}
where the right $\C_F$-module structrure is given by tensor product.

Let $\{t_1,...,t_n\}$ be a set of representative of the cosets of $G/F$, thus $\{t_1g^{-1},...,t_ng^{-1}\}$ is a set of representative of the cosets of $G/H$.  Using \eqref{equiv-as-mt1}, we have $\D$-module equivalences
\begin{align*}
\D\bt_{\C_{F}}\M&\simeq \oplus_{i=1}^n\C_{t_iF}\bt_{\C_{F}}\M\simeq \oplus_{i=1}^n(\C_{t_ig^{-1}H}\bt_{\C_{H}}\C_{gF})\bt_{\C_{F}}\M\\
&\simeq \oplus_{i=1}^n\C_{t_ig^{-1}H}\bt_{\C_{H}}(\C_{gF}\bt_{\C_{F}}\M)\\&\simeq \oplus_{i=1}^n\C_{t_ig^{-1}H}\bt_{\C_{H}}\N\simeq\D\bt_{\C_{H}}\N.
\end{align*}
\end{proof}

Now, we shall prove that the map
$$(H,\N)  \longmapsto \Ind^{\D}_{\C_H} \N$$
gives a first step to classify indecomposable exact $\D$-modules.
 The proof of the following Theorem follows the same steps as the proof of \cite[Proposition 12]{MM} in the semisimple case.

\begin{teo}\label{classi-pairs}\cite[Proposition 12]{MM} There exists a bijection between 

\begin{itemize}
    \item equivalence classes of indecomposable exact $\D$-modules, and
    
    \item  equivalence classes of type-1 pairs $(H,\N)$.
\end{itemize} \end{teo}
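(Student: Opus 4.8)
The plan is to exhibit the assignment $(H,\N)\mapsto\Ind^{\D}_{\C_H}\N$ as a well-defined bijection on the relevant equivalence classes, and to observe that three of the four needed properties are already available. First, Lemma \ref{ind-rest}(2) guarantees that $\Ind^{\D}_{\C_H}\N$ is an indecomposable exact $\D$-module whenever $(H,\N)$ is a type-1 pair, so the assignment lands in the correct target. Second, Proposition \ref{equiv-pairs} supplies simultaneously well-definedness on equivalence classes (its implication $(2)\Rightarrow(1)$) and injectivity (its implication $(1)\Rightarrow(2)$). Consequently the whole content of the theorem reduces to \emph{surjectivity}: every indecomposable exact $\D$-module $\M$ must be shown equivalent to $\Ind^{\D}_{\C_H}\N$ for some type-1 pair $(H,\N)$.

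To build the pair I would begin by restricting. By Lemma \ref{ind-rest}(1) the module $\Res^{\D}_\C\M$ is exact, so it decomposes as $\oplus_{i=1}^{n}\M_i$ into indecomposable exact $\C$-modules. The group $G$ permutes these blocks via $\C_g\otb\M_i\subseteq\M_{g\cdot i}$, the permutation is transitive since $\M$ is indecomposable, and I set $H=\St(\M_1)$, so that $X_\M\simeq G/H$ by Lemma \ref{X-set}. The candidate is $\N:=\M_1$, which I would endow with a $\C_H$-module structure: as every $h\in H$ fixes the block $\M_1$, the action sends $\C_h\otb\M_1$ into $\M_{h\cdot 1}=\M_1$, so $\M_1$ is stable under $\C_H$. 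Since $\C_H=\oplus_{h\in H}\C_h$ is itself an $H$-graded extension of $\C$, I may apply Lemma \ref{ind-rest}(1),(3) to $\C_H\supseteq\C$ to conclude that $\N$ is an indecomposable exact $\C_H$-module whose restriction $\Res^{\C_H}_\C\N=\M_1$ is indecomposable; thus $(H,\N)$ is a type-1 pair.

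It remains to prove $\M\simeq\Ind^{\D}_{\C_H}\N$, and this is where I expect the main difficulty. One cannot invoke Proposition \ref{ind-sub} directly over $\C$, because its hypothesis forces the stabilizer of the chosen block to be trivial, whereas $H=\St(\M_1)$ may be nontrivial; this is precisely the reason the detour through $\C_H$ is needed. Instead I would apply Proposition \ref{ind-sub} to the tensor subcategory $\C_H\subseteq\D$ with the abelian decomposition $\D=\C_H\oplus\big(\oplus_{gH\neq H}\C_g\big)$. For this I decompose $\Res^{\D}_{\C_H}\M=\oplus_j\Pc_j$ into $\C_H$-blocks; each $\Pc_j$ restricts to an $H$-orbit of the $\M_i$ and is therefore indecomposable exact over $\C_H$ (again by Lemma \ref{ind-rest}(1) over $\C_H\supseteq\C$), while the $H$-orbit of $\M_1$ is the singleton $\{\M_1\}$, so that $\Pc_1=\N$. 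The crucial verification is the hypothesis of Proposition \ref{ind-sub}: for $g\notin H$ and $N\in\N=\M_1$ one has $\C_g\otb\M_1\subseteq\M_{g\cdot 1}$ with $\M_{g\cdot 1}\neq\M_1$, since $g\notin H=\St(\M_1)$, so $\M_{g\cdot 1}$ lies in an $H$-orbit distinct from that of $\M_1$, i.e. in some $\Pc_j$ with $j\neq 1$. Hence the stabilizer of $\Pc_1$ in the $\D$-over-$\C_H$ picture is trivial, Proposition \ref{ind-sub} applies, and it returns $A=\uhom_\D(M,M)\in\C_H$ together with $\M\simeq\Ind^{\D}_{\C_H}\big((\C_H)_A\big)\simeq\Ind^{\D}_{\C_H}\N$, finishing surjectivity. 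The hard part throughout is the block bookkeeping, namely establishing the $\C_H$-stability of $\M_1$ and confirming that passing from $\C$ to $\C_H$ trivializes the stabilizer so that Proposition \ref{ind-sub} becomes applicable.
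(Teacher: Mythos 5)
Your proposal is correct and follows essentially the same route as the paper's proof: well-definedness and injectivity come from Proposition \ref{equiv-pairs}, and surjectivity is obtained by decomposing $\Res^\D_\C\M$ into $\C$-blocks, setting $H=\St(\M_1)$, $\N=\M_1$ with the restricted $\C_H$-action, and invoking Proposition \ref{ind-sub}. The only difference is that you make explicit the step the paper states without justification, namely that Proposition \ref{ind-sub} must be applied to the tensor subcategory $\C_H\subseteq\D$ (not to $\C$ itself) with the $\C_H$-block decomposition of $\M$, its hypothesis holding because the $H$-orbit of the block $\M_1$ is a singleton while $g\notin H$ sends $\M_1$ into blocks outside that orbit -- a correct and useful elaboration.
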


\begin{proof} Take a type-1 pair $(H,\N)$. Then $\Ind^{\D}_{\C_H} \N$ is an indecomposable exact $\D$-module. By Proposition \ref{equiv-pairs}, the equivalence class of this module category does not depend on the equivalence class of the pair $(H,\N)$. %The type-1 pair associated to  $\Ind_{\C_H}^\D\N$ is $(H,\N)$. This follows from the first part of the proof of Proposition \ref{equiv-pairs}. 

\medbreak

Let $\M$ be an indecomposable exact $\D$-module. We shall construct a type-1 pair $(H,\N)$ such that $\M\simeq \Ind^{\D}_{\C_H} \N$. Let $Res^\D_\C\M=\oplus_{i=1}^n \M_i$
be a decomposition of indecomposable exact $\C$-modules. Consider the action of $G$ over $X_\M$ as described before.

 Let $H:=\text{Stab}(\M_1)=\{f\in G: \C_f\btc\M_1\simeq\M_1\text{ as $\C$-modules}\}$. Set $\N:=\M_1$. The action $\otb:\D\times \M\to \M$ restricted to $\otb:\C_H\times \N\to \N$, induced a structure of $\C_H$-module over $\N$. $\N$ is an exact $\C_H$-module since $\M$ is an exact $\C$-module. $\N$  is an  indecomposable $\C_H$-module since $\N$ is indecomposable as a $\C$-module. Hence, we obtain a type-1 pair $(H,\N)$. It follows from Lemma \ref{acti-equivalence}  that $\M\simeq\Ind_{\C_H}^\D\N$ as $\D$-modules.
\end{proof}

\subsection{Second step} To any type-1 pair $(H,\N)$, there is associated a type-A datum, namely the type-A data associated to the $\ca_H$-module $\N$. See Definition \ref{defi:associated-typeA}. We shall introduce a new equivalence relation of type-A data, such that there exists a bijection between equivalence classes of type-1 pairs and equivalence classes of type-A data.

For this, we shall first need to explicitly give the  type-A data ass\-ociated to 
$(gHg^{-1}, \C_{gH} \boxtimes_{\C_h}\N)$ in terms of the type-A data associated to $(H,\N)$.

\begin{rmk}\label{rmk-g-act} Let $A\in\ca$ be an algebra such that $\ca_A$ is an indecomposable exact $\ca$-module. Let $(H, \{A_h\}_{h\in H}, \beta)$ be a type-A data for $\ca_A$, then using  Lemma \ref{typeA-datum-to-modcat0} we obtain that
\begin{itemize}
    \item $\ca_A$ has a structure of $\ca_H$-module, taking $g=1$, and
    \item $(\ca_g)_A$ has a structure of $\ca_{gHg^{-1}}$-module, using $gHg^{-1}$ instead of $H$. This action, according to \eqref{mod-action-cA}, is
$$Z\widetilde{\ot} M= (Z\ot M)\ot_A A_{h^{-1}}, \ \ \text{for any }Z\in \C_{ghg^{-1} }, M\in (\ca_g)_A.$$
\end{itemize}
\end{rmk}
\begin{prop}\label{typeA-for-changed} Let $(H,\N)$ be a type-1 pair, and $A\in \ca$ be an algebra such that $\Res^{\C_H}_\C\N\simeq \ca_A$. Let $(H, \{A_h\}_{h\in H}, \beta)$ be an algebra datum for $\C_A$.
There exists an equivalence   of $\ca_{gHg^{-1}}$-modules  $$\C_{gH} \boxtimes_{\C_H}\N\simeq (\ca_g)_A.$$
Here the structure of $\ca_{gHg^{-1}}$-module on $(\ca_g)_A$ is the one presented in Remark \ref{rmk-g-act}.
\end{prop}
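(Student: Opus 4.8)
The plan is to exhibit an explicit functor $\Phi\colon \C_{gH}\boxtimes_{\C_H}\N\to(\C_g)_A$ and upgrade it to a $\C_{gHg^{-1}}$-module equivalence, reducing its coherence to the identities already checked in Lemma~\ref{typeA-datum-to-modcat0}. By Definition~\ref{defi:associated-typeA} (that is, Proposition~\ref{modcat-to-typeA-datum} with $g=1$) I may identify $\N$ with $\C_A$ carrying the $\C_H$-action reconstructed from the datum, so that $Y\ott M=(Y\ot M)\ot_A A_{l^{-1}}$ for $Y\in\C_l$, $l\in H$, $M\in\C_A$, as in \eqref{mod-action-cA}. I then define, on objects,
\[
\Phi(W\boxtimes M)=(W\ot M)\ot_A A_{k^{-1}},\qquad W\in\C_{gk},\ k\in H,\ M\in\C_A;
\]
since $W\ot M\in\C_{gk}$ and $A_{k^{-1}}\in\C_{k^{-1}}$, the object $\Phi(W\boxtimes M)$ lies in $\C_g$ and inherits the obvious right $A$-action, hence belongs to $(\C_g)_A$.

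First I would check that $\Phi$ factors through the balanced product, i.e. that the assignment above extends to a balanced functor $\C_{gH}\times\N\to(\C_g)_A$. Concretely, for $Y\in\C_l$ one produces an isomorphism $\Phi((W\ot Y)\boxtimes M)\cong\Phi(W\boxtimes(Y\ott M))$ by splitting $A_{(kl)^{-1}}\cong A_{l^{-1}}\ot_A A_{k^{-1}}$ through $\beta_{l^{-1},k^{-1}}$ and rearranging with $\alpha$, $\balph$ and $\walph$; the pentagon-type coherence of this balancing is exactly the cocycle identity \eqref{beta-assoc}. That $\Phi$ is an equivalence of categories is then formal: combining Proposition~\ref{Res}(2) with Lemma~\ref{action-equiv0} gives a chain of equivalences
\[
\C_{gH}\boxtimes_{\C_H}\N\simeq(\C_g\btc\C_H)\boxtimes_{\C_H}\N\simeq\C_g\btc\N\simeq\C_g\btc\C_A\simeq(\C_g)_A,
\]
and tracing $W\boxtimes M$ through it yields $(W\ot M)\ot_A A_{k^{-1}}$ up to natural isomorphism, so $\Phi$ is naturally isomorphic to this composite.

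It remains to equip $\Phi$ with a $\C_{gHg^{-1}}$-module structure. For $Z\in\C_{ghg^{-1}}$, $h\in H$, the objects $\Phi(Z\cdot(W\boxtimes M))=((Z\ot W)\ot M)\ot_A A_{(hk)^{-1}}$ and $Z\ott\Phi(W\boxtimes M)=(Z\ot((W\ot M)\ot_A A_{k^{-1}}))\ot_A A_{h^{-1}}$ (where $\ott$ is the action of Remark~\ref{rmk-g-act}) are linked by
\begin{multline*}
c_{Z,W\boxtimes M}=(\balph_{Z,W\ot M,A_{k^{-1}}}\ot\id)\,\walph^{-1}_{Z\ot(W\ot M),A_{k^{-1}},A_{h^{-1}}}\\
(\id\ot\beta_{k^{-1},h^{-1}})\,(\alpha_{Z,W,M}\ot\id),
\end{multline*}
which has precisely the shape of the associativity constraint \eqref{associ-typeA-b}. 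The axiom \eqref{modfunctor1} for $(\Phi,c)$ then unwinds, term by term, into the same manipulation of $\alpha$, $\balph$, $\walph$ and $\beta$ that proves \eqref{left-modulecat1}/\eqref{mdgt1} in Lemma~\ref{typeA-datum-to-modcat0}, so it follows again from \eqref{beta-assoc} together with the pentagon and triangle axioms; likewise \eqref{modfunctor2} reduces to the unit relations \eqref{beta-unit}. I expect this coherence check to be the only real obstacle, but since it is formally a shorter repetition of the long computation already carried out for Lemma~\ref{typeA-datum-to-modcat0}, it can be invoked rather than redone. With $(\Phi,c)$ a module functor that is an equivalence of underlying categories, its quasi-inverse acquires a canonical module structure, yielding the asserted equivalence of $\C_{gHg^{-1}}$-modules. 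Alternatively, one may avoid the explicit coherence by transporting the tautological action along $\Phi$ and invoking Proposition~\ref{modcat-to-typeA-datum} to match the resulting type-A datum with $(H,\{A_h\}_{h\in H},\beta)$.
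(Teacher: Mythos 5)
Your proposal is correct and follows essentially the same route as the paper: the same explicit functor $\Phi(W\boxtimes M)=(W\ot M)\ot_A A_{k^{-1}}$, the same chain of equivalences via Proposition~\ref{Res}(2) and Lemma~\ref{action-equiv0}, and the same module structure built from $\beta$, with coherence reduced to \eqref{beta-assoc} exactly as in Lemma~\ref{typeA-datum-to-modcat0}. The only differences are cosmetic: the paper simplifies by assuming $\D$ strict and leaves the balancing implicit in the composite functor, whereas you keep the constraints $\alpha,\balph,\walph$ explicit and verify the balanced structure directly.
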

\begin{proof} To avoid complicated calculations, we shall assume that the category $\Do$ is strict. We shall freely use the identification $ \C_{gH}\simeq \C_g\boxtimes_{\C} \C_H$, see Proposition \ref{Res} (2).
There exists a functor $\Phi: \C_{gH} \boxtimes_{\C_H} \ca_A \to (\ca_g)_A$ such that
$$\Phi((X\boxtimes Y) \boxtimes  M)= X\ot Y\ot M\ot_A A_{h^{-1}},$$
for any $X\in \C_g$, $Y\in\C_h$, $h\in H$, $M\in \C_A$. The functor $\Phi$ is an equivalence of categories, since it is the composition 
$$\C_{gH} \boxtimes_{\C_H} \ca_A \xrightarrow{\overline{M}_{g,H}\boxtimes \Id} 
\C_g \boxtimes_{\C} \C_H \boxtimes_{\C_H} \ca_A \xrightarrow{\Id  \boxtimes \otb}   \C_g \boxtimes_{\C}\ca_A  \xrightarrow{\ot}  (\ca_g)_A,$$
where $\overline{M}_{g,H}: \C_{gH} \to \C_g \boxtimes_{\C} \C_H$ is a quasi-inverse of the equivalence presented in Proposition \ref{Res} (2).
The fact that the action functor $\otb:\C_H \boxtimes_{\C_H} \ca_A \to \ca_A  $  and the restriction of the  tensor product $\C_g \boxtimes_{\C} \ca_A \to (\ca_g)_A$ are equivalences follows from  Lemma \ref{action-equiv0} using that $\C_A$ is indecomposable. The functor $\Phi$ has a module structure as follows. If $f,h\in H$,  $Z\in \C_{gfg^{-1}}$, $U\in  \C_{gh}$, $M\in \C_A$, then
$$\Phi(Z\otb (U\boxtimes M))= Z\ot U\ot M\ot_A A_{(fh)^{-1}}, $$
$$Z\widetilde{\ot}\Phi(U\boxtimes M) = Z\widetilde{\ot} (U\ot M\ot_A A_{h^{-1}})= Z\ot U\ot M\ot_A A_{h^{-1}}\ot_A A_{f^{-1}}.$$
Define the natural isomorphisms $$c_{Z, U\boxtimes M}: Z\ot U\ot M\ot_A A_{(fh)^{-1}}\to Z\ot U\ot M\ot_A A_{h^{-1}}\ot_A A_{f^{-1}}$$ as $c_{Z, U\boxtimes M}=\id\ot \beta_{h^{-1},f^{-1}}$.
In this case, equation \eqref{beta-assoc} is equivalent to \eqref{modfunctor1} and \eqref{modfunctor2}.  Hence $(\Phi, c)$ is a module equivalence.
\end{proof}

\begin{defi}\label{defi:G-equiv} Two type-A data $(H, \{A_h\}_{h\in H}, \beta)$, $(F, \{B_f\}_{f\in F}, \gamma)$ are $G$-\textit{equivalent} if there exists $g\in G$, and  an invertible $(B,A)$-bimodule $C\in {}_B(\C_g)_{A}$ together with $(B,A)$-bimodule isomorphisms $\tau_h: B_{ghg^{-1}}\ot_B C \to C \ot_A A_h$, $h\in H$, such that $F=gHg^{-1}$, and for any $h,l\in H$
\begin{equation}\label{Gequiv1} 
\overline{\rho}_C\tau_1= \overline{\lambda}_C,
\end{equation}
\begin{equation}\label{Gequiv2}\begin{split}
    (\id\ot \beta_{h,l})\tau_{hl}=\walph_{C,A_h,A_l}(\tau_h\ot\id) &\walph^{-1}_{B_{ghg^{-1}},C,A_l} (\id\ot \tau_l)\walph_{B_{ghg^{-1}},B_{glg^{-1}},C}\\&(\gamma_{ghg^{-1},glg^{-1}}\ot\id).
\end{split} 
\end{equation}

\end{defi}

\begin{rmk} Two equivalent type-A data, according to Definition \ref{def:typea}, are G-equivalent. One has to take $g=1$.
\end{rmk}

Let $(F, \{B_f\}_{f\in F}, \gamma)$  be a type-A datum for $\ca_B$. By Lemma \ref{typeA-datum-to-modcat0}, $\C_B$ has structure of $\C_F$-module. Let $(H, \{A_h\}_{h\in H}, \beta)$ be a type-A datum for $\C_A$ then, using remark \ref{rmk-g-act}, it follows that for any $g\in G$, $(\C_g)_A$ has a structure of $\C_{gHg^{-1}}$-module. These two actions will be used in the next Proposition.

\begin{prop}\label{g-eq-tA} Let $A, B\in\C$ be algebras such that $\C_A, \C_B$ are indecomposable exact $\C$-modules. Let $(H, \{A_h\}_{h\in H}, \beta)$ be a type-A datum for $\C_A$ and $(F, \{B_f\}_{f\in F}, \gamma)$ be  type-A data for $\C_B$. They are $G$-equivalent, if and only if, there exists $g\in G$ such that $(\C_g)_A$, $\C_B$ are equivalent as $\C_{gHg^{-1}}$-modules.
\end{prop}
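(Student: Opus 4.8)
The plan is to recognize the statement as the $g$-twisted analogue of Lemma~\ref{typeA-datum-to-modcat}, whose $g=1$ case is exactly the assertion here. Since $G$-equivalence forces $F=gHg^{-1}$, and $\C_B$ is a $\C_{gHg^{-1}}$-module only when $\C_F=\C_{gHg^{-1}}$, both sides of the biconditional presuppose the same $g$ with $gHg^{-1}=F$; I fix such a $g$ and argue for it. The relevant module structures are the one on $\C_B$ arising from $(F,\{B_f\}_{f\in F},\gamma)$ via Lemma~\ref{typeA-datum-to-modcat0} and the one on $(\C_g)_A$ described in Remark~\ref{rmk-g-act}.

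$(\Rightarrow)$ Suppose the data are $G$-equivalent through an invertible bimodule $C\in {}_B(\C_g)_A$ and isomorphisms $\tau_h\colon B_{ghg^{-1}}\ot_B C\to C\ot_A A_h$ obeying \eqref{Gequiv1} and \eqref{Gequiv2}. I would take $F_C\colon\C_B\to(\C_g)_A$, $F_C(M)=M\ot_B C$, which is an equivalence of abelian categories because $C$ is invertible, a quasi-inverse being $-\ot_A\overline C$ for the inverse bimodule $\overline C\in{}_A(\C_{g^{-1}})_B$. I would give $F_C$ the module structure
\[
c_{X,M}=(\balph_{X,M,C}\ot\id)\,\walph^{-1}_{X\ot M,C,A_{h^{-1}}}\,(\id\ot\tau_{h^{-1}})\,\walph_{X\ot M,B_{gh^{-1}g^{-1}},C},\qquad X\in\C_{ghg^{-1}},
\]
modelled on the formula in Lemma~\ref{typeA-datum-to-modcat}, and verify \eqref{modfunctor1}, \eqref{modfunctor2}. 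The verification of \eqref{modfunctor1} is the long sequence of pentagon and naturality steps already carried out for Lemma~\ref{typeA-datum-to-modcat}; it transcribes verbatim upon reading $B_h$ as $B_{ghg^{-1}}$ and \eqref{typeA-equiv} as \eqref{Gequiv2}, while \eqref{modfunctor2} comes from \eqref{Gequiv1}. Thus $(F_C,c)$ is a $\C_{gHg^{-1}}$-module equivalence $\C_B\simeq(\C_g)_A$.

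$(\Leftarrow)$ Conversely, let $(\Phi,c)\colon\C_B\to(\C_g)_A$ be a $\C_{gHg^{-1}}$-module equivalence. By \cite[Proposition 3.11]{EO} the functor $\Phi$ is exact, and by Lemma~\ref{action-equiv0} the category $(\C_g)_A$ is an indecomposable exact $\C$-module, so Proposition~\ref{natu-bimod} (with target $(\C_g)_A$) produces an invertible bimodule $C\in{}_B(\C_g)_A$ with $\Phi\simeq-\ot_B C$. The module constraint $c$ then yields, exactly as in the second half of the proof of Lemma~\ref{typeA-datum-to-modcat}, $(B,A)$-bimodule isomorphisms $\tau_h\colon B_{ghg^{-1}}\ot_B C\to C\ot_A A_h$, and the coherence conditions \eqref{modfunctor1}, \eqref{modfunctor2} for $c$ become precisely \eqref{Gequiv2}, \eqref{Gequiv1}. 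Hence the two type-A data are $G$-equivalent.

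The step I expect to be the main obstacle is the appeal to Proposition~\ref{natu-bimod} with codomain $(\C_g)_A$ rather than a module category $\C_{B'}$: one must know that every $\C$-module functor $\C_B\to(\C_g)_A$ has the form $-\ot_B C$ for a $(B,A)$-bimodule $C$ concentrated in the graded component $\C_g$. I would justify this by using Proposition~\ref{homint}, together with the indecomposability and exactness of $(\C_g)_A$ from Lemma~\ref{action-equiv0}, to fix an equivalence $(\C_g)_A\simeq\C_{A'}$ for some algebra $A'\in\C$; under it ${}_B(\C_g)_A\simeq{}_B\C_{A'}$, so Proposition~\ref{natu-bimod} transports to classify the module functors, and the identification of $(\C_g)_A$ with the right $A$-modules in $\C_g$ records the degree-$g$ grading of $C$. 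With this in hand, neither direction calls for a genuinely new coherence computation: both reduce to the degree-$g$ transcription of the diagram chases already completed for Lemma~\ref{typeA-datum-to-modcat}.
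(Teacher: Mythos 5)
Your proposal is correct and takes essentially the same route as the paper: the paper's entire proof of Proposition \ref{g-eq-tA} is the one-line remark that it follows \emph{mutatis mutandis} from the proof of Lemma \ref{typeA-datum-to-modcat}, and your transcription — reading $B_h$ as $B_{ghg^{-1}}$, \eqref{typeA-equiv} as \eqref{Gequiv2}--\eqref{Gequiv1}, and using the module structures of Lemma \ref{typeA-datum-to-modcat0} and Remark \ref{rmk-g-act} — is exactly that intended argument. Your additional justification for invoking Proposition \ref{natu-bimod} with target $(\C_g)_A$ (via Proposition \ref{homint} and Lemma \ref{action-equiv0}) fills in a detail the paper leaves implicit in its \emph{mutatis mutandis}.
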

\pf The proof follows {\it mutatis mutandis} the proof of Lemma \ref{typeA-datum-to-modcat}.\epf

Combining Theorem \ref{classi-pairs} and Proposition \ref{g-eq-tA} we get the main result of the paper.
\begin{teo}\label{classi-pairsAand1} Let $G$ be a finite group and $\D=\oplus_{g\in G} \C_g$ be a $G$-graded extension of $\C=\ca_1$. There exists a bijection between 

\begin{itemize}
   \item equivalence classes of indecomposable exact $\D$-modules, 
    \item equivalence classes of type-1 pairs, and
    
    \item  $G$-equivalence classes of type-A data. 
\end{itemize} \end{teo}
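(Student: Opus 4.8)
The plan is to treat Theorem \ref{classi-pairs} as a black box supplying the bijection between equivalence classes of indecomposable exact $\D$-modules and equivalence classes of type-1 pairs, so that the entire content of the theorem reduces to constructing a bijection between equivalence classes of type-1 pairs and $G$-equivalence classes of type-A data. First I would define the map on objects by sending a type-1 pair $(H,\N)$ to its associated type-A datum in the sense of Definition \ref{defi:associated-typeA}: one chooses an algebra $A$ with $\Res^{\C_H}_{\C}\N\simeq\C_A$ as $\C$-modules and applies Proposition \ref{modcat-to-typeA-datum} with $g=1$. The remark following Definition \ref{defi:associated-typeA}, which rests on Lemma \ref{typeA-datum-to-modcat}, already shows that the equivalence class of the resulting datum is independent of the choice of $A$ within its Morita class, so the assignment depends only on $\N$ as a $\C_H$-module.

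Next I would verify that this assignment respects the two equivalence relations. Suppose $(H,\N)$ and $(F,\N')$ are equivalent type-1 pairs, witnessed by $g\in G$ with $H=gFg^{-1}$ and $\N\simeq\C_{gF}\boxtimes_{\C_F}\N'$ as $\C_H$-modules. Let $(F,\{B_f\}_{f\in F},\gamma)$ be the datum attached to $(F,\N')$, with $\Res^{\C_F}_{\C}\N'\simeq\C_B$. Proposition \ref{typeA-for-changed} identifies $\C_{gF}\boxtimes_{\C_F}\N'$ with $(\C_g)_B$ as $\C_{gFg^{-1}}=\C_H$-modules, the target carrying the structure of Remark \ref{rmk-g-act}. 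On the other hand, Proposition \ref{modcat-to-typeA-datum} realizes $\N$ as $\C_A$ equipped with the module structure of Lemma \ref{typeA-datum-to-modcat0}. Chaining these equivalences produces an equivalence of $\C_H$-modules between $(\C_g)_B$ and $\C_A$, and Proposition \ref{g-eq-tA} converts it into a $G$-equivalence of the two type-A data, so the map descends to equivalence classes.

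For injectivity I would run the same chain backwards: a $G$-equivalence of the data attached to $(H,\N)$ and $(F,\N')$ gives, via Proposition \ref{g-eq-tA}, some $g\in G$ with $F=gHg^{-1}$ together with an equivalence $(\C_g)_A\simeq\C_B$ of $\C_{gHg^{-1}}$-modules; since $\C_B\simeq\N'$ as $\C_F$-modules, Proposition \ref{typeA-for-changed} reinterprets this as the equivalence $\C_{gH}\boxtimes_{\C_H}\N\simeq\N'$ of $\C_F$-modules, which exhibits $(F,\N')\sim(H,\N)$. For surjectivity, given any type-A datum $(H,\{A_h\}_{h\in H},\beta)$ for $\C_A$, Lemma \ref{typeA-datum-to-modcat0} with $g=1$ endows $\C_A$ with a $\C_H$-module structure whose restriction to $\C$ is the indecomposable exact module $\C_A$; applying Lemma \ref{ind-rest} parts (1) and (3) to the extension $\C\subseteq\C_H$, the pair $(H,\C_A)$ is a type-1 pair. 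Proposition \ref{modcat-to-typeA-datum} then produces a datum for $(H,\C_A)$ whose associated $\C_H$-module structure is equivalent to the original one, and Lemma \ref{typeA-datum-to-modcat} shows this produced datum is equivalent, hence $G$-equivalent, to $(H,\{A_h\}_{h\in H},\beta)$.

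The delicate point, and essentially the only place where care is genuinely needed, is the bookkeeping of the conjugating element $g$ and of the directions of the module equivalences, so that the relation of Definition \ref{defi:G-equiv} lines up exactly with the relation on type-1 pairs; in particular one must check that $G$-equivalence is symmetric, which follows by replacing the invertible bimodule $C$ and the element $g$ by a quasi-inverse bimodule and $g^{-1}$. All the genuinely computational content has already been discharged in Propositions \ref{typeA-datum-to-modcat}, \ref{typeA-for-changed} and \ref{g-eq-tA}, so the proof of the theorem itself is a purely formal assembly of these statements together with Theorem \ref{classi-pairs}.
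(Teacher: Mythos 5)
Your proposal is correct and follows essentially the same route as the paper: reduce via Theorem \ref{classi-pairs} to a bijection between type-1 pairs and type-A data, send $(H,\N)$ to its associated datum of Definition \ref{defi:associated-typeA}, and match the equivalence relations using Propositions \ref{typeA-for-changed} and \ref{g-eq-tA}. You are in fact somewhat more thorough than the paper, whose written proof only verifies that the map is well defined on equivalence classes, leaving injectivity (the converse direction of Proposition \ref{g-eq-tA}) and surjectivity (via Lemma \ref{typeA-datum-to-modcat0}, Proposition \ref{modcat-to-typeA-datum} and Lemma \ref{typeA-datum-to-modcat}) implicit.
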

 \pf In presence of Theorem \ref{classi-pairs}, we only have to present a bijection between equivalence classes of type-1 pairs and $G$-equivalence classes of type-A data.  Let $(H,\N)$ be a type-1 pair, and $A\in \ca$ be an algebra such that $\Res^{\C_H}_\C\N\simeq \ca_A$. We have associated to $(H,\N)$ a type-A datum $(H, \{A_h\}_{h\in H}, \beta) $, see Definition \ref{defi:associated-typeA}. 
 
 This map does not depend on the  equivalence class of the chosen type-1 pair. Indeed, let $(F,\No')$ be a type-1 pair equivalent to $(H,\N)$. Then, there exists $g\in G$ such that $F=gHg^{-1}$ and there exists an equivalence $\No'\simeq \C_{gH}\boxtimes_{\C_H} \No$ of $\C_{gHg^{-1}}$-modules. It follows from Proposition \ref{typeA-for-changed} that $\C_{gH}\boxtimes_{\C_H} \No\simeq (\C_g)_A$ as $\C_{gHg^{-1}}$-modules. Let $(gHg^{-1}, \{B_f\}_{f\in gHg^{-1}}, \gamma)$ be the type-A datum associated to $\N'$. Since $\N'\simeq \C_B$ as $\C_{gHg^{-1}}$-modules, then $\C_B\simeq (\C_g)_A$ as $\C_{gHg^{-1}}$-modules. It follows from Proposition \ref{g-eq-tA} that type-A data $(gHg^{-1}, \{B_f\}_{f\in gHg^{-1}}, \gamma)$, $(H, \{A_h\}_{h\in H}, \beta) $ are $G$-equivalent. 
 \epf

The explicit description of $\D$-modules from the type-A datum allows us to obtain the next result. Recall that equivalence classes of fiber functors for a tensor category $\D$ are in correspondence with equivalence classes of semisimple $\D$-module categories of rank 1.
\begin{cor}\label{fiber-funct} Let $G$ be a finite group and $\D=\oplus_{g\in G} \C_g$ be a $G$-graded extension of $\C=\ca_1$. There exists a bijection between
\begin{itemize}
   \item Equivalence classes of fiber functors $F:\D\to \vect_\ku$, and
   
  \item $G$-equivalence classes of type-A datum $(G, \{A_h\}_{h\in G}, \beta)$ such that $\C_A$ is a semisimple category of rank 1.\qed
\end{itemize}
\end{cor}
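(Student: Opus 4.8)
The plan is to specialize the bijection of Theorem~\ref{classi-pairsAand1} to rank-$1$ semisimple module categories. By the remark recalled immediately before the Corollary, equivalence classes of fiber functors $F\colon\D\to\vect_\ku$ coincide with equivalence classes of semisimple $\D$-modules of rank $1$. A category of rank $1$ has a single simple object and is therefore indecomposable, and a semisimple module category is automatically exact: every object of it is projective, so $P\otb M$ is projective for every projective $P\in\D$ and every object $M$. Hence a rank-$1$ semisimple $\D$-module is a particular indecomposable exact $\D$-module, and Theorem~\ref{classi-pairsAand1} already furnishes a bijection between all indecomposable exact $\D$-modules and $G$-equivalence classes of type-A data. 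The only remaining task is to single out which type-A data produce a semisimple $\D$-module of rank $1$.

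First I would compute the rank and detect semisimplicity of the $\D$-module $\M\simeq\Ind^\D_{\C_H}(\C_A)=\D\boxtimes_{\C_H}\C_A$ attached to a type-A datum $(H,\{A_h\}_{h\in H},\beta)$. Since restriction to $\C$ leaves the underlying abelian category unchanged, it suffices to analyse $\Res^\D_\C\M$. By Lemma~\ref{acti-equivalence} and the transitivity of the $G$-action on $X_\M$ established in the First Step, $\Res^\D_\C\M\simeq\bigoplus_{i=1}^n\M_i$, where the summands are indexed by the cosets $G/H$, so that $n=[G:H]$, and each $\M_i\simeq\C_g\boxtimes_\C\C_A$ for a suitable $g$. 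As tensoring by the invertible bimodule $\C_g$ is an equivalence of abelian categories, each $\M_i$ has the same number of simple objects as $\C_A$ and is semisimple exactly when $\C_A$ is. Counting simples yields $\rk(\M)=[G:H]\cdot\rk(\C_A)$, and $\M$ is semisimple if and only if $\C_A$ is.

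The characterization is then immediate: $\M$ is semisimple of rank $1$ if and only if $[G:H]=1$ and $\C_A$ is semisimple with $\rk(\C_A)=1$, that is, if and only if $H=G$ and $\C_A$ is a semisimple category of rank $1$. For the converse direction, when $H=G$ one has $\C_H=\D$ and $\M\simeq\D\boxtimes_\D\C_A\simeq\C_A$ (equivalently $\M\simeq\D_{\widehat A}$ by Proposition~\ref{alg-mod}), which is visibly semisimple of rank $1$ under this hypothesis. Restricting the bijection of Theorem~\ref{classi-pairsAand1} to the data with $H=G$ and $\C_A$ semisimple of rank $1$ gives the asserted correspondence. The only genuinely delicate point is the bookkeeping that reduces the rank and semisimplicity of $\M$ to those of the single summand $\C_A$; this is precisely where Lemma~\ref{acti-equivalence} together with the transitive $G$-action on $X_\M$ (Lemma~\ref{X-set}) carry the argument, the rest being a direct specialization.
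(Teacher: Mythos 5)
Your overall strategy is exactly the one the paper intends: the Corollary carries a \qed precisely because it is the specialization of Theorem \ref{classi-pairsAand1} to semisimple rank-one modules, via the recalled correspondence between fiber functors and such modules, and your reduction to the two conditions $H=G$ and ``$\C_A$ semisimple of rank $1$'' is the right one.

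There is, however, one intermediate assertion that is false as stated: the rank formula $\rk(\M)=[G:H]\cdot\rk(\C_A)$, which you justify by claiming that tensoring with the invertible bimodule $\C_g$ preserves the number of simple objects. The summand $\C_{g}\boxtimes_\C \C_A$ is equivalent to $(\C_{g})_A$ (Lemma \ref{action-equiv0}), and this category need not have the same rank as $\C_A$: invertibility of $\C_g$ makes $\C_g\boxtimes_\C -$ a $2$-equivalence of the $2$-category of $\C$-modules, but it does not preserve the abelian equivalence class of an individual module category. Concretely, take $\D$ a Tambara--Yamagami category, $\C=\C_1=\vect_G$ with $G$ nontrivial, $\C_g$ the component with unique simple object $m$, and $A=\ku$; then $\C_A=\vect_G$ has $|G|$ simple objects while $\C_g\boxtimes_\C\C_A\simeq\C_g$ has one, so the summands $\M_i$ of $\Res^\D_\C\M$ can even have pairwise different ranks. (The companion claim that $\M_i$ is semisimple exactly when $\C_A$ is would also require an argument in the non-semisimple setting.) Fortunately your proof only needs two weaker facts, both true and both essentially present in your write-up: (i) each summand $\C_{gH}\boxtimes_{\C_H}\C_A$ is nonzero, so $\rk(\M)\geq [G:H]$ and rank $1$ forces $H=G$; and (ii) when $H=G$ the induced module is $\D\boxtimes_\D\C_A\simeq\C_A$ (equivalently $\D_{\widehat{A}}$ by Proposition \ref{alg-mod}), whose underlying abelian category is $\C_A$, so it is semisimple of rank $1$ precisely when $\C_A$ is. Replacing the rank formula by (i) and (ii) repairs the argument, and the repaired proof coincides with the one the paper leaves implicit.
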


\subsection{Pointed fusion categories}\label{pointed-exa}  In this Section we shall show that our results, when applied to a pointed fusion category, agree with the results obtained in \cite{Na1}.  Pointed fusion categories are parameterized by  pairs $(G,\omega)$, where $G$ is a finite group, and $\omega\in H^3(G,\ku^{\times})$ is a 3-cocycle. The tensor category $\D=\C(G,\omega)$,  described in  Example \ref{pointed-fusion},  is a $G$-extension of the category $\vect_\ku$.

\medbreak

We first described the equivalence classes of type-A data for these categories.

\begin{lema} There is a bijection between
\begin{itemize}
   \item equivalence classes of type-A data, and
     \item pairs $(H,\beta)$, where $H\subseteq G$ is a group, and $\beta\in H^2(H,\ku^{\times})$ is a homology class such that $d\beta\, \omega\mid_{H\times H\times H}=1$.
\end{itemize}  
\end{lema}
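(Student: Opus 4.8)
The plan is to reduce everything to scalars. Since $\C=\vect_\ku$ has a unique indecomposable exact module category up to equivalence, namely $\vect_\ku\simeq\C_\ku$, and, by the remark following Definition \ref{defi:associated-typeA}, the equivalence class of a type-A datum does not depend on the Morita class of its underlying algebra, every type-A datum is equivalent to one with $A=\ku$. For $A=\ku$ the relative tensor product $\ot_A$ is the ordinary tensor product over $\ku$, the bimodule category ${}_A\D_A$ is $\D=\C(G,\omega)$ itself, and its associativity $\walph$ is the associativity of $\C(G,\omega)$. An invertible bimodule $A_h\in{}_A(\C_h)_A$ is then a one–dimensional vector space placed in degree $h$; after fixing a basis vector of each $A_h$ (with $A_1=\ku$ carrying its unit), every bimodule isomorphism $\beta_{f,h}\colon A_{fh}\to A_f\ot A_h$ becomes multiplication by a scalar, still written $\beta_{f,h}\in\ku^\times$.

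Next I would translate the axioms of Definition \ref{def:typea} into cochain identities. Evaluating $\walph_{A_f,A_g,A_h}$ on the chosen basis vectors is multiplication by $\omega(f,g,h)$, so the coherence equation \eqref{beta-assoc} becomes the scalar identity
\begin{equation*}
\beta_{g,h}\,\beta_{f,gh}=\omega(f,g,h)\,\beta_{f,g}\,\beta_{fg,h}.
\end{equation*}
This says exactly that $\beta$ is a $2$-cochain on $H$ whose coboundary trivializes $\omega\mid_H$; written multiplicatively this is the relation $d\beta\,\omega\mid_{H\times H\times H}=1$ of the statement, and it forces $\omega\mid_H$ to be trivial in $H^3(H,\ku^\times)$. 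Likewise the unit conditions \eqref{beta-unit} become the normalization $\beta_{1,h}=\beta_{h,1}=1$. Hence, for a fixed subgroup $H\subseteq G$, a type-A datum with $A=\ku$ is precisely a normalized $2$-cochain $\beta$ on $H$ satisfying $d\beta\,\omega\mid_H=1$.

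Finally I would analyze the equivalence relation of Definition \ref{def:typea}. With $A=B=\ku$, an invertible $(B,A)$-bimodule $C\in{}_B\C_A$ lies in the trivial component $\C_1$, so $C\cong\ku$ in degree $1$ and each $\tau_h\colon B_h\ot_B C\to C\ot_A A_h$ is a scalar $\tau_h\in\ku^\times$. Because $\omega$ is normalized, the three associativity factors in \eqref{typeA-equiv} equal $\omega(1,h,l)=\omega(h,1,l)=\omega(h,l,1)=1$, so \eqref{typeA-equiv} collapses to $\beta_{h,l}\,\tau_{hl}=\tau_h\,\tau_l\,\gamma_{h,l}$, that is $\beta=\gamma\cdot d\tau$, while $\overline{\rho}_C\tau_1=\overline{\lambda}_C$ gives $\tau_1=1$. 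Thus two type-A data on the same $H$ are equivalent exactly when $\beta$ and $\gamma$ differ by the coboundary of a normalized $1$-cochain, i.e. when $[\beta]=[\gamma]$ in $H^2(H,\ku^\times)$; conversely any $1$-cochain $\tau$ produces such an equivalence. Assembling the three steps gives the bijection $(H,\{A_h\}_{h\in H},\beta)\mapsto(H,[\beta])$. The only genuinely delicate point is the opening reduction together with the identification of $\walph$ on these one–dimensional bimodules with the $3$-cocycle $\omega$ and the bookkeeping of normalizations; once the objects are one–dimensional, the rest is the standard dictionary between group-cohomological cochains and these scalars.
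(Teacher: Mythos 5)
Your proposal is correct and takes essentially the same route as the paper's (sketched) proof: reduce to the case $A=\ku$, observe that each $A_h$ is then a one-dimensional space in degree $h$, fix bases so that $\beta$ and $\walph$ become scalars, and identify \eqref{beta-assoc} with the cochain condition $d\beta\,\omega\mid_{H\times H\times H}=1$ and \eqref{beta-unit} with normalization. The only difference is one of completeness, not of method: you also carry out the analysis of the equivalence relation (equivalence of data with $A=B=\ku$ corresponds exactly to $\beta$ and $\gamma$ being cohomologous, via a degree-$1$ invertible bimodule $C\cong\ku$ and scalars $\tau_h$), a step the paper's sketch leaves implicit but which is needed for the stated bijection on equivalence classes.
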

\pf We will only sketch the proof. Let $H\subseteq G$ be a subgroup, and $\beta\in H^2(H,\ku^{\times})$  such that $d\beta\, \omega\mid_{H\times H\times H}=1$. Then, $(H, \{\ku_h\}_{h\in H}, \beta')$ is a type-A data, where $\ku_h$ is the field $\ku$, concentrated in degree $h$, and $\beta'_{f,h}:\ku_{fh}\to \ku_f\ot \ku_h$ is defined by $\beta'_{f,h}(1)= \beta_{f,h} 1\ot 1.$

Let  $(H, \{A_h\}_{h\in H}, \beta)$ be a type-A data, this means that  $H\subseteq G$ is a subgroup and $A_1=A\in \vect_\ku$ is an algebra such that $(\vect_\ku)_A$ is an indecomposable exact $\vect_\ku$-module. This means that $A=\ku$. Hence,  for any $h\in H$, $A_h$ is a 1-dimensional vector space. Let  $\tau_h:A_h\to \ku_h$ be some isomorphism, for any $h\in H$. If we define $\widetilde{\beta}_{f,h}:\ku_{fh}\to \ku_f\ot \ku_h$ as 
$$\widetilde{\beta}_{f,h}=(\tau_f\ot\tau_h)\beta_{f,h} \tau^{-1}_{fh}, $$
then $(H, \{A_h\}_{h\in H}, \beta)$ is equivalent to $(H, \{\ku_h\}_{h\in H}, \widetilde{\beta})$. Equation \eqref{beta-assoc} implies that $d\widetilde{\beta}\, \omega\mid_{H\times H\times H}=1$.
\epf

Theorem \ref{classi-pairsAand1} implies, in this case, that indecomposable exact $\D$-modules are parametrized by pairs $(H,\beta)$, where $H\subseteq G$ is a group, and $\beta\in C^2(H,\ku^{\times})$ is a 2-cochain such that $d\beta\, \omega=1$. This parametrization coincides with the one given in \cite{EO}, \cite{Na1}.

For such pair $(H,\beta)$, denote $\M_0(H,\beta)$ the associated indecomposable exact $\D$-module. As abelian categories $\M_0(H,\beta)=\C(G,\omega)_{\ku_{\beta}H},$ the category of right $\ku_{\beta}H$-modules. 

Let $(H, \{\ku_h\}_{h\in H}, \beta)$ be a type-A datum. The algebra $\widehat{A}$ described in Proposition \ref{typeA-algebra-struc}, in this case, coincides with the twisted group algebra $\ku_{\beta} H$. This implies, using Proposition \ref{alg-mod},  that the module category corresponding to $(H, \{\ku_h\}_{h\in H}, \beta)$ in Theorem \ref{classi-pairsAand1} is precisely $\M_0(H,\beta)$.

Let $(L,\xi)$ be another pair, where $L\subseteq G$ is a subgroup, and $\xi\in H^2(L,\ku^{\times})$ is such that $d\xi\omega=1.$ Theorem \ref{classi-pairsAand1} implies that there exists an equivalence $\M_0(H,\beta)\simeq  \M_0(L,\xi)$ of $\C(G,\omega)$-modules, if and only if, the type-A data $(H, \{\ku_h\}_{h\in H}, \beta)$, $(L, \{\ku_l\}_{l\in L}, \xi)$ are $G$-equivalent. According to Definition \ref{defi:G-equiv}, this means that, there exists $g\in G$ such that $L=gHg^{-1}$, and for any $h\in H$ there are scalars $\tau_h\in \ku^{\times} $  such that
 $\tau_1=1,$ and
 \begin{equation}\label{beta-xi}\beta_{h,l} \tau_{hl}=\omega(g,h,l)\tau_h \omega^{-1}(ghg^{-1},g,l)\tau_l\omega(ghg^{-1},glg^{-1},g)\xi_{ghg^{-1},glg^{-1}},\end{equation}
for any $h,l\in H$. Following \cite{Na1}, we define the 2-cochain $\Omega_g:H\times H\to \ku$ as
$$\Omega_g(h,l)=\frac{\omega(g,h,l)\omega(ghg^{-1},glg^{-1},g) }{\omega(ghg^{-1},g,l)}. $$
Equation \eqref{beta-xi} implies $\beta=\Omega_g \xi^g $ in $H^2(H,\ku^{\times})$.  Hence, we recover the next result.

\begin{teo}\cite[Thm 1.1]{Na1}
Assume  $L, H\subseteq G$ are two groups, and $\beta\in C^2(H,\ku^{\times})$, 
$\xi\in C^2(L,\ku^{\times})$ are 2-cochains such that $d\beta =\omega^{-1}= d\xi $. There exists an equivalence of module categories between $\M_0(H,\beta), \M_0(L,\xi)$, if and only if, there exists $g\in G$ such that $H=gLg^{-1},$ and the class of $\beta^{-1}\xi^g \Omega_g\mid_{L\times L}$ is trivial in $H^2(H,\ku^{\times})$.\qed
\end{teo}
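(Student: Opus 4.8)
The plan is to deduce the statement directly from Theorem \ref{classi-pairsAand1} by unwinding the notion of $G$-equivalence of type-A data (Definition \ref{defi:G-equiv}) in the pointed setting. By the preceding Lemma and the identification $\M_0(H,\beta)=\C(G,\omega)_{\ku_\beta H}$, an equivalence $\M_0(H,\beta)\simeq\M_0(L,\xi)$ of $\C(G,\omega)$-modules holds if and only if the type-A data $(H,\{\ku_h\}_{h\in H},\beta)$ and $(L,\{\ku_l\}_{l\in L},\xi)$ are $G$-equivalent. Thus everything reduces to reading off Definition \ref{defi:G-equiv} when $\C=\vect_\ku$.

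First I would record that over $\vect_\ku$ the only algebra $A$ with $\C_A$ indecomposable exact is $A=\ku$, so all the bimodules $\ku_h$ and $\ku_l$ are one-dimensional and concentrated in a single degree. An invertible $(\ku,\ku)$-bimodule $C\in{}_\ku(\C_g)_\ku$ is therefore $\ku_g$ for a single $g\in G$; since the isomorphisms $\tau_h\colon B_{ghg^{-1}}\ot_B C\to C\ot_A A_h$ must exist, matching the supports forces $L=gHg^{-1}$, which is exactly the required conjugacy. Each $\tau_h$ is an isomorphism of one-dimensional spaces, hence multiplication by a scalar $\tau_h\in\ku^\times$, and condition \eqref{Gequiv1} becomes the normalization $\tau_1=1$. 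The heart of the argument is then the translation of \eqref{Gequiv2}: on one-dimensional homogeneous objects of $\C(G,\omega)$ every associativity isomorphism $\balph$ and $\walph$ appearing there acts as multiplication by the corresponding value of $\omega$ (the products $\ot_\ku$ of one-dimensional graded spaces being tracked strictly by the group law), and reading these scalars off term by term turns \eqref{Gequiv2} into the scalar identity \eqref{beta-xi}.

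With $\Omega_g$ as defined, equation \eqref{beta-xi} says precisely that
\[
\beta_{h,l}=\Omega_g(h,l)\,\xi^g_{h,l}\,\frac{\tau_h\tau_l}{\tau_{hl}},\qquad \xi^g_{h,l}:=\xi_{ghg^{-1},glg^{-1}},
\]
so that $\beta=\Omega_g\,\xi^g\cdot d\tau$, where $d\tau(h,l)=\tau_h\tau_l\tau_{hl}^{-1}$ is the coboundary of the $1$-cochain $h\mapsto\tau_h$. One checks from $d\beta=\omega^{-1}=d\xi$ and the definition of $\Omega_g$ that $\beta^{-1}\Omega_g\xi^g$ is a genuine $2$-cocycle, so its class in $H^2(H,\ku^\times)$ is well defined. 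Now I would conclude both directions. If the categories are equivalent, scalars $\tau_h$ with $\tau_1=1$ satisfying \eqref{beta-xi} exist, and the displayed identity exhibits $\beta^{-1}\Omega_g\xi^g=(d\tau)^{-1}$ as a coboundary, hence trivial in $H^2(H,\ku^\times)$. Conversely, if that class is trivial, a $1$-cochain realizing it as a coboundary, normalized to take value $1$ at the identity, supplies scalars $\tau_h$ satisfying \eqref{Gequiv1} and \eqref{beta-xi}, hence a $G$-equivalence. Replacing $g$ by $g^{-1}$ and interchanging the roles of the two cochains recovers the formulation $H=gLg^{-1}$ of \cite[Thm.\ 1.1]{Na1}.

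The main obstacle is the bookkeeping in the second paragraph: verifying that the particular composite of $\balph$'s and $\walph$'s in \eqref{Gequiv2}, evaluated on one-dimensional objects, yields exactly the product of $\omega$-values recorded in $\Omega_g$ together with $\xi^g$ and $d\tau$, with no spurious factors. This is routine once one observes that all constraints collapse to values of $\omega$ and that $\ot_\ku$ is strictly graded, but it must be carried out carefully to match the precise form of $\Omega_g$ and of equation \eqref{beta-xi}.
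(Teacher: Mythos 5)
Your proposal is correct and follows essentially the same route as the paper: reduce via Theorem \ref{classi-pairsAand1} (together with the lemma identifying type-A data with pairs $(H,\beta)$ and Proposition \ref{alg-mod}) to $G$-equivalence of the type-A data $(H,\{\ku_h\},\beta)$ and $(L,\{\ku_l\},\xi)$, then unwind Definition \ref{defi:G-equiv} so that the bimodule $C=\ku_g$ forces the conjugacy condition, \eqref{Gequiv1} gives $\tau_1=1$, and \eqref{Gequiv2} collapses to the scalar identity \eqref{beta-xi}, which is exactly the statement that $\beta^{-1}\xi^g\Omega_g$ is a coboundary. The only difference is that you spell out the scalar bookkeeping and the coboundary interpretation $d\tau$ more explicitly than the paper does, which is a presentational refinement rather than a different argument.
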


\end{document}